\theoremstyle{plain}
\newtheorem{theorem}{Theorem}[section]
\newtheorem{question}[theorem]{Question}
\newtheorem{lemma}[theorem]{Lemma}
\newtheorem{proposition}[theorem]{Proposition}
\theoremstyle{definition}
\newtheorem{definition}[theorem]{Definition}
\theoremstyle{remark}
\newtheorem{remark}[theorem]{Remark}
\newtheorem{example}[theorem]{Example}
\newcommand{\tra}{\mathrm{tr}}
\newcommand{\pslr}{\mathrm{PSL}(2,\mathbb{R})}
\newcommand{\pslc}{\mathrm{PSL}(2,\mathbb{C})}
\newcommand{\slc}{\mathrm{SL}(2,\mathbb{C})}
\newcommand{\slr}{\mathrm{SL}(2,\mathbb{R})}
\begin{document}

\title[On dynamics of the $\text{MCG}(S_{g,n})$ on $\pslr$ Characters]{On dynamics of the Mapping class group action on relative $\pslr$-Character Varieties}
\author[A.K.Nair]{Ajay Kumar Nair}
\email{ajaynair@iisc.ac.in}

\address{	Department of Mathematics\\
  Indian Institute of Science\\
  Bangalore 560012, India}

\begin{abstract} 
In this paper, we study the mapping class group action on the relative $\pslr$-character varieties of punctured surfaces. It is well known that Minsky's primitive-stable representations form a domain of discontinuity for the $\text{Out}(F_n)$-action on the $\pslc$-character variety. We define simple-stability of representations of fundamental group of a surface into $\pslr$ which is an analogue of the definition of primitive stability and prove that these representations form a domain of discontinuity for the $\text{MCG}$-action. Our first main result shows that holonomies of hyperbolic cone surfaces are simple-stable. We also prove that holonomies of hyperbolic cone surfaces with exactly one cone-point of cone-angle less than $\pi$ are primitive-stable, thus giving examples of an infinite family of indiscrete primitive-stable representations.
\end{abstract}
\maketitle

\section{Introduction}

In this paper we are concerned with representations of the fundamental group of punctured surfaces into $\pslr$. We will first motivate the results obtained in the paper by surveying some results about representations of fundamental groups of closed surfaces. Let $S_g$ represent a closed surface of genus $g$. For now, we focus on $g \geq 2$. 
In \cite{GOLDMAN2}, Goldman demonstrated that the $\pslr$-character variety, has $4g - 3$ connected components, which are indexed by the Euler numbers of representations. Of particular interest are the components with maximal Euler numbers, $\pm (2g - 2)$, which correspond to the Teichmüller space of $S_g$, or the space of marked hyperbolic structures on $S_g$. These components also coincide with the space of discrete and faithful representations. 

The mapping class group of $S_g$ (see Definition~\ref{def:MCG}) acts on this character variety, preserving the connected components described above. A classical result (see \cite[Theorem 12.2]{FB}) establishes that this action is properly discontinuous on the Teichmüller space. Goldman conjectured that the mapping class group action is ergodic with respect to the symplectic measure on non-maximal components of the character variety (see \cite[Conjecture 3.1]{GOLDMANConj}). In  \cite{Marche-Wolff}, Marché and Wolff confirmed this conjecture in the case $g = 2$ for Euler number $\pm 1$ but showed that it does not hold for the component with Euler number $0$.

Now, we will turn to our case i.e., the punctured surfaces. Let $S_{g,n}$ denote a surface with genus $g$ and $n$ punctures, where $n \geq 1$. The fundamental group, $\pi_1(S_{g,n})$, of $S_{g,n}$  is given by $F_{2g+ n -1}$ i.e., free group on $2g + n - 1$ generators. We will be interested in $\pslr$-character varieties of fundamental groups of punctured surfaces. The group $\pslr$, as the group of orientation preserving isometries of $\mathbb{H}^2$, is particularly interesting because discrete and faithful representations from fundamental groups of surfaces into $\pslr$ naturally arise as holonomies of hyperbolic structures on surfaces. Similarly, $\pslc$-character varieties are also widely studied since $\pslc$ is the group of orientation preserving isometries of $\mathbb{H}^3$. Again, the discrete and faithful representations here arise as holonomies of hyperbolic structures on 3-manifolds.

 The group of outer automorphisms of the free group $F_n$, denoted by $\text{Out}(F_n)$, acts by precomposition on the character varieties. In \cite{Minsky}, Minsky defined primitive-stable representations of the free group $F_n$ into $\pslc$ and proved that they form a domain of discontinuity for the $\text{Out}(F_n)$-action on $\pslc$-character variety. Moreover, this domain of discontinuity is strictly larger than the Schottky representations and contains indiscrete representations. 

Interpreting $F_n$ as a fundamental group of a punctured surface, the mapping class group $\text{MCG}(S_{g,n})$ is isomorphic to a subgroup of $\text{Out}(F_n)$ \cite[Theorem 8.8]{FB} and the analogue of cyclically reduced primitive elements are the free homotopy classes of simple closed curves on the surface. Also, this action preserves the conjugacy class of peripheral simple closed curves, giving a well-defined action on relative character varieties, i.e., a subspace of character variety with fixed peripheral conjugacy classes (see Section~\ref{sec:CharacterVariety}). 

In this context, we define the notion of simple-stability (see Definition~\ref{def:SimpleStability}), for punctured surfaces of genus at least one. Note that a similar definition for closed surfaces can be found in \cite[Section 5]{Minsky}. This becomes an obvious counterpart for primitive-stability in the context of $\text{MCG}(S_{g,n})$-action on relative $\pslr$-character varieties. Let $\mathcal{SS}(S_{g,n})$ denote the space of simple-stable representations and $\mathcal{X}(S_{g,n}, \mathcal{C})$  denote the relative $\pslr$-character variety with $\mathcal{C}$ a fixed set of $n$ conjugacy classes (see Section~\ref{sec:CharacterVariety}). Following \cite{Minsky}, we prove: 

\begin{restatable}{theorem}{ProperDiscontinuity}\label{thm:propdiscontinuity}
	$\textup{MCG}(S_{g,n})$ acts properly discontinuously on $\mathcal{SS}(S_{g,n})$. In particular, $\textup{MCG}(S_{g,n})$ acts properly discontinuously on $\mathcal{SS}(S_{g,n}) \cap \mathcal{X}(S_{g,n}, \mathcal{C})$, where $\mathcal{C}$ is a set of $n$ conjugacy classes in $\pslr$.
\end{restatable}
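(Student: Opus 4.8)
The plan is to follow Minsky's strategy from \cite{Minsky} adapted to the surface setting. The key object is the notion of simple-stability: a representation $\rho$ is simple-stable if there exist constants so that, for every (cyclically reduced) word $w$ representing a simple closed curve on $S_{g,n}$, the $\rho$-orbit map sends the axis (or a geodesic fellow-traveling the word) into $\H^2$ with a uniform quasi-geodesic constant, with the constants depending only on $\rho$. The crucial point is that simple-stability is an \emph{open} condition on $\mathcal{X}(S_{g,n})$ and, more importantly, that the quasi-geodesic constants can be chosen \emph{locally uniformly}: on a small neighborhood $U$ of any $[\rho]\in\mathcal{SS}(S_{g,n})$, a single pair of constants $(K,\epsilon)$ works for all $[\rho']\in U$ and all simple closed curves. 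This local uniformity should already be established (or be immediate) from the definition in Section~\ref{def:SimpleStability}; I would isolate it as the first step.

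Second, I would use the Morse lemma for quasi-geodesics in $\H^2$: a $(K,\epsilon)$-quasi-geodesic stays within Hausdorff distance $D=D(K,\epsilon)$ of the genuine geodesic with the same endpoints, and $D$ depends only on $(K,\epsilon)$. Combined with step one, this means that on a neighborhood $U$ of $[\rho_0]$, after fixing a basepoint $o\in\H^2$ and a finite generating set, the translation length $\ell_{\rho}(w)$ of $\rho(w)$ for $w$ a simple closed curve is comparable, up to a uniform additive and multiplicative error independent of $\rho\in U$, to the word length $|w|$: there are $A\geq 1$, $B\geq 0$ with $\tfrac{1}{A}|w| - B \le \ell_{\rho}(w) \le A|w| + B$ for all $[\rho]\in U$ and all simple $w$. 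This is the quantitative heart of the argument.

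Third comes the properness itself. Suppose $[\rho_0]\in\mathcal{SS}(S_{g,n})$ and take a precompact neighborhood $U$ as above; I must show that $\{g\in\mathrm{MCG}(S_{g,n}) : g\cdot U \cap U \neq \emptyset\}$ is finite. If $g\cdot [\rho'] = [\rho'']$ with $[\rho'],[\rho'']\in U$, then for every simple closed curve $c$ we have $\ell_{\rho'}(g_*(c)) = \ell_{\rho''}(c)$, where $g_*$ denotes the induced automorphism of $\pi_1$. Applying the two-sided bound from step two on $U$ gives $\tfrac{1}{A}|g_*(c)| - B \le \ell_{\rho''}(c) \le A|c| + B$, hence $|g_*(c)| \le A^2 |c| + (A^2+A)B$ for every simple closed curve $c$; symmetrically $|c| \le A^2|g_*(c)| + \dots$, so $g_*$ is a bi-Lipschitz (on word length of simple classes) automorphism with uniform constants. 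Now I invoke the standard finiteness fact: the set of mapping classes $g$ such that $g_*$ distorts the word length of \emph{simple closed curves} by at most a fixed bi-Lipschitz constant is finite. This is where one uses that simple closed curves ``fill'' enough of $\pi_1$ — concretely, one can choose a fixed finite collection of simple closed curves whose lengths determine the element of $\mathcal{X}$ up to finitely many choices, or appeal to the action of $\mathrm{MCG}$ on the curve complex / on $\teich$: bi-Lipschitz control on simple-curve word lengths bounds the translation lengths of $g$ on a fixed hyperbolic structure, and $\mathrm{MCG}$ acts properly discontinuously on $\teich$ (\cite[Theorem 12.2]{FB}), forcing finiteness. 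I expect this last step — packaging ``uniform bi-Lipschitz distortion of simple-curve lengths $\Rightarrow$ finitely many mapping classes'' cleanly, without circularity — to be the main obstacle, and I would handle it by fixing once and for all a basepoint hyperbolic structure $X_0$ on $S_{g,n}$, noting $\ell_{X_0}(c) \asymp |c|$ for simple $c$, and then observing that $g^{-1}\cdot X_0$ has all simple-curve lengths comparable to those of $X_0$ with uniform constants, which confines $g^{-1}\cdot X_0$ to a compact subset of (the relevant thick part of) $\teich(S_{g,n})$; properness of the $\mathrm{MCG}$-action on $\teich$ then yields the finiteness.

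Finally, the ``in particular'' statement is immediate: $\mathcal{SS}(S_{g,n})\cap\mathcal{X}(S_{g,n},\mathcal{C})$ is an $\mathrm{MCG}$-invariant subspace (the action preserves peripheral conjugacy classes, hence $\mathcal{C}$), so the restriction of a properly discontinuous action to an invariant subspace is properly discontinuous.
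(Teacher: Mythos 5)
Your proposal is correct and its skeleton coincides with the paper's: on a compact (or precompact) subset of $\mathcal{SS}(S_{g,n})$ one extracts, by continuity of the quasi-geodesic constants, a uniform two-sided comparison between word length and hyperbolic displacement/translation length for words representing simple closed curves; then $g\cdot[\rho']=[\rho'']$ with both in the set forces $\Vert g_*(c)\Vert \leq A^2\Vert c \Vert + B'$ for all such curves $c$, and one concludes by a finiteness statement for mapping classes with bounded word-length distortion on simple closed curves. The genuine difference is how that finiteness is established. The paper proves it combinatorially (Lemma~\ref{lem:boundedwordlength_mappingclasses}): it builds an admissible filling by finitely many non-separating simple closed curves and invokes the Alexander method, so that bounded word length of the images leaves only finitely many possibilities for the mapping class; this keeps the argument self-contained and reuses machinery (admissible fillings) needed later for the $SBQ$ section. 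You instead route the finiteness through Teichm\"uller theory: fix a complete hyperbolic structure $X_0$, use $\ell_{X_0}(c)\asymp\Vert c\Vert$ for simple curves (the same thick-part argument as the paper's Proposition on finite-area holonomies), note that bounded distortion confines $g^{-1}\cdot X_0$ to a compact subset of $\teich(S_{g,n})$ via properness of the maximum of length functions over a filling collection, and finish with proper discontinuity of the $\mathrm{MCG}$-action on $\teich$ \cite[Theorem 12.2]{FB}. This works and is arguably quicker if one is willing to import those classical facts, and your use of translation length (conjugation-invariant) is slightly cleaner than the paper's basepoint displacement; but note two points of care: the paper's definition of simple-stability only quantifies over \emph{non-separating} simple closed curves (the surfaces here have genus at least one), so both your uniform length comparison and your filling collection in $\teich$ must be taken among non-separating curves, which is possible precisely because $g\geq 1$; and the properness of the filling length function on $\teich$ is an extra ingredient you should cite rather than treat as immediate. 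The ``in particular'' part is handled identically in both arguments: $\mathrm{MCG}(S_{g,n})$ preserves the peripheral conjugacy classes, hence preserves $\mathcal{X}(S_{g,n},\mathcal{C})$.
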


We will now look at elements in relative character varieties that arise as the holonomies of hyperbolic cone surfaces. For definition and details on hyperbolic cone surfaces, see Section~\ref{sec:HyperbolicConeSurfaces}. For other relevant results on hyperbolic cone surfaces, see \cite{HuipingPan, PARLIER, SPTAN}. The holonomies of hyperbolic cone surfaces are indiscrete if one of the cone-angles is irrational. Thus, these form examples of \textit{geometrisable} indiscrete representations i.e., indiscrete representations having an equivariant immersion from the universal cover of the surface into $\mathbb{H}^2$. We prove the following theorems for these holonomies:

\begin{restatable}{theorem}{ConeSurfacesSimpleStability}\label{thm:main1}
    The holonomies of admissible cone surfaces are simple-stable. Furthermore, in the special case where there is exactly one cone-point, the holonomies are primitive-stable.
\end{restatable}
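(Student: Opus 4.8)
The plan is to exploit the developing map of the cone structure. Write $\rho\colon\pi_1(S_{g,n})=F_N\to\pslr$ for the holonomy of an admissible cone surface $X$, and $D\colon\widetilde X\to\mathbb{H}^2$ for the associated $\rho$-equivariant developing map, a local isometry on the universal cover of the complement of the cone points. Fixing a lift $\widetilde o$ of a basepoint and putting $o=D(\widetilde o)$, the orbit map is $g\mapsto\rho(g)o=D(g\cdot\widetilde o)$, so for any ``leaf'' $\lambda$ in the Cayley graph of $F_N$ the orbit image of $\lambda$ is exactly the $D$-image of the orbit path $\beta_\lambda\subset\widetilde X$ that $\lambda$ determines. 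By the definition of simple- (resp.\ primitive-) stability it therefore suffices to show: for $\lambda$ ranging over the leaves carried by non-peripheral simple closed curves (resp., in the one cone-point case, over all primitive leaves), the paths $D\circ\beta_\lambda$ are $(K,A)$-quasigeodesics in $\mathbb{H}^2$ with $K,A$ depending only on $X$.

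The first step is the local geometry near a geodesic representative. Using the defining properties of admissibility (notably that cone angles are less than $\pi$), a length-minimizing representative of an essential closed curve cannot pass through a cone point -- it could be shortened by pushing it off one side -- so its geodesic representative $\gamma$ is smooth and disjoint from the cone locus, as is its lift $\widetilde\gamma\subset\widetilde X$. Since $D$ is a local isometry, it carries the smooth geodesic $\widetilde\gamma$ to a locally geodesic curve in $\mathbb{H}^2$, i.e.\ to a complete geodesic line $G_\gamma$; by $\langle w\rangle$-equivariance, $\rho(w)$ translates along $G_\gamma$, so $\rho(w)$ is hyperbolic, $G_\gamma$ is its axis, and its translation length equals $\ell_X(\gamma)$.

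Next comes the coarse comparison. Using that $F_N$ acts properly on $\widetilde X$ and cocompactly on a convex core in which the representatives $\widetilde\gamma$ of non-peripheral classes lie, and that $\widetilde X$ is Gromov hyperbolic, one obtains -- uniformly over all non-peripheral $w$ -- that the word length $|w|$, the displacement $d_{\widetilde X}(\widetilde o,w\cdot\widetilde o)$ and $\ell_X(\gamma)$ are all comparable; that the lift $\widetilde\gamma$ closest to $\widetilde o$ passes uniformly near $\widetilde o$; and that the Schwarz--Milnor map sends $\lambda$ (a line in the Cayley tree, the axis of $w$) to a uniform quasigeodesic $\beta_\lambda$ with the same endpoints at infinity as $\widetilde\gamma$, so $\beta_\lambda$ and $\widetilde\gamma$ fellow-travel with a uniform constant. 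As $D$ is $1$-Lipschitz, $D\circ\beta_\lambda$ then remains within a uniform distance of $G_\gamma$, and the sample points $\rho(w^k)o$ progress along $G_\gamma$ with spacing $\asymp\ell_X(\gamma)\asymp|w|$.

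The remaining step -- and, I expect, the main obstacle -- is to promote ``stays a bounded distance from $G_\gamma$'' to ``is a $(K,A)$-quasigeodesic'', equivalently to rule out that $D$ folds $\beta_\lambda$ back on itself; this is a uniform bounded-cancellation estimate for $D$ along developed axes. For \emph{simple} $\gamma$ I would deduce it from the embeddedness of $\gamma$ in $X$: an embedded collar about $\gamma$ pulls back to an embedded strip about $\widetilde\gamma$ on which $D$ is an isometric embedding onto a strip about $G_\gamma$, so $D$ coarsely intertwines nearest-point projection onto $\widetilde\gamma$ with nearest-point projection onto $G_\gamma$, whence the coarsely monotone progress of $\beta_\lambda$ along $\widetilde\gamma$ (hyperbolicity of $\widetilde X$) transports to coarsely monotone progress of $D\circ\beta_\lambda$ along $G_\gamma$; together with the uniform spacing of the sample points this gives simple-stability. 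In the one cone-point case one must instead run this for \emph{every} primitive $w$, and two features specific to that case make it possible: when $g\ge 1$ with a single cone point, the only peripheral conjugacy class is $\prod_i[a_i,b_i]$, which is not primitive in $F_N$, so every primitive $w$ is non-peripheral and the previous steps apply; and with a single cone point of angle $<\pi$ one must establish -- this is the technical heart -- that $D$ is injective on a uniform neighborhood of any geodesic disjoint from the cone locus (the local model of a cone of angle $<\pi$ develops with controlled multiplicity, and a collar estimate at the one cone point keeps a uniform neighborhood of $\widetilde\gamma$ away from its preimages), which again feeds the projection argument and yields primitive-stability. Everything outside this last non-degeneracy statement for $D$ is a combination of the local-isometry property of the developing map with standard hyperbolic-group bookkeeping, following \cite{Minsky}.
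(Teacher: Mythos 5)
Your outline goes wrong in two places, and the more serious one is the primitive-stability half. Everything in your argument rests on the claim that a length-minimizing representative of an essential closed curve avoids the cone points because it ``could be shortened by pushing it off one side''. That is only true when the curve wraps around a cone point of angle $\theta$ fewer than $\lfloor \pi/\theta\rfloor$ times: an arc wrapping at least $\lfloor\pi/\theta\rfloor$ times has its length-minimizer \emph{through} the cone point (Proposition~\ref{thm:conearcs}, Remark~\ref{rem:HomotopyClassPreserved}), and for classes like $c^{n}$ the infimum is not even attained away from it. For simple closed curves the wrapping is at most one (Proposition~\ref{prop:selfintersection}), which is exactly how Theorem~\ref{thm:scgeodesic_existence} is proved, so your first step is fine for the simple-stability statement. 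But for primitive, non-simple words you give no mechanism at all to bound the wrapping: the only feature of the one-cone-point case you invoke is that the peripheral class $\prod_i[a_i,b_i]$ is not primitive, which rules out that class itself but says nothing about subwords of a primitive leaf spiralling around the cone point. The indispensable input here is Minsky's blocking lemma (\cite{Minsky}, Lemma 4.4): $c^{2}$, with $c$ the boundary word, is not a subword of any cyclically reduced primitive word, hence the relevant paths cannot wrap twice around the cone point and therefore miss it (since $\theta<\pi$ gives $\lfloor\pi/\theta\rfloor\geq 2$). Without this your primitive case has no starting point -- neither the existence of a cone-point-free axis nor the minimality of the connecting paths is justified.

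The second problem is that the step you call the technical heart -- injectivity of the developing map on a \emph{uniform} neighborhood of every relevant axis -- is both unobtainable and unnecessary. Unobtainable: collar widths of simple closed geodesics on a fixed cone surface decay to zero with the length of the geodesic, non-simple closed geodesics have no embedded collar, and closed geodesics pass arbitrarily close to the cone point as their lengths grow, so no neighborhood of $\widetilde\gamma$ of size independent of $\gamma$ stays away from cone-point lifts; a per-curve collar would also wreck the uniformity of the quasigeodesic constants that the definition of simple-stability demands. Unnecessary: since $\mathrm{dev}$ is a local isometry and local geodesics of $\mathbb{H}^2$ are global geodesics, the developed image of any genuine geodesic in $\widetilde{S\setminus P}$ is automatically minimizing, with no injectivity needed. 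This is what the paper exploits: for each subword $w$ of a simple (resp.\ primitive) leaf it shows the shortest path from $\widetilde x$ to $w\cdot\widetilde x$ avoids the cone points (by the wrapping argument above) and lies in a $\pi_1$-invariant cocompact core equipped with its induced path metric (Lemma~\ref{lem:cone_nbhd}, Lemma~\ref{lem:QuasiIsometry} -- note your ``convex core'' is not convex, which is precisely why the induced path metric is used), so that $d_{\mathbb{H}^2}(O,\rho(w)\cdot O)$ equals the core-distance and is comparable to $\Vert w\Vert$ by \v{S}varc--Milnor, uniformly over all subwords (Remark~\ref{rem:simpstabaltdef}); no Gromov-hyperbolic fellow-travelling or projection bookkeeping is needed. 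If you insert the blocking lemma into your one-cone-point case and replace the collar/injectivity step by this observation, your outline essentially collapses onto the paper's proof.
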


The first part of the above theorem gives us examples of simple-stable representations that are not Schottky and the second part gives us examples of indiscrete primitive-stable representations answering a question of Minsky in \cite[Section 5, p.66]{Minsky}.

The case when the surface is a one-holed torus has been explored by many authors, see \cite{GOLDMAN, Lee-Xu, TWZ1}. In \cite{GOLDMAN}, Goldman completely describes the mapping class group action on the relative $\slr$-character varieties of one-holed torus. In particular, Goldman proves that the $\text{Out}(F_2)$ acts properly on the space of holonomies of hyperbolic cone torus with a cone-point of fixed cone-angle $\theta < 2 \pi$ \cite[Section 3.5]{GOLDMAN}. It is interesting to note that in the case of one-holed torus, $\text{MCG}(S_{1,1}) = \text{Out}(F_2)$ and primitive elements are exactly the free homotopy classes of simple closed curves (except for the boundary curve). Theorem~\ref{thm:main1} together with Theorem~\ref{thm:propdiscontinuity} can be seen as generalisation of this result of Goldman. 

In \cite{Bowditch1}, for representations of $F_2$ into $\pslc$, Bowditch defined the notion of $BQ$-conditions and in \cite{TWZ1}, Tan-Wong-Zhang proved many results for $BQ$- representations. One of the important results they prove is that the representations satisfying the $BQ$-conditions form an open subset of the character variety and $\text{Out}(F_2)$ acts properly discontinuously on this subset. In \cite{Bowditch1}, Bowditch conjectures that a type-preserving representation (i.e., non-elementary representation with peripheral element going to a parabolic) satisfies $BQ$-conditions iff it is quasifuchsian (see Conjecture A in \cite{Bowditch1}). This conjecture still remains open.

Denote the primitive-stable representations of $F_n$ by $\mathcal{PS}(F_n)$ and the representations satisfying $BQ$-conditions by $\mathcal{BQ}(F_n)$. For more on these definitions see Sections~\ref{sec:SimpleStability}, \ref{sec:SBQconditions}. It is a well-known fact that $\mathcal{PS}(F_n) \subset \mathcal{BQ}(F_n)$, see \cite[Proposition 2.9]{LupiThesis}. In the case of $F_2$, Lupi \cite{LupiThesis}, in his thesis proves the equivalence of $\mathcal{PS}(F_2)$ and $\mathcal{BQ}(F_2)$ for $\pslr$-representations. Lee-Xu \cite{Lee-Xu} and Series \cite{CarolineSeriesPSBQ} independently proved that $\mathcal{BQ}(F_2) \subset \mathcal{PS}(F_2)$ for $\pslc$-representations of $F_2$ confirming a conjecture of Minsky in \cite{Minsky}. 

In \cite{Maloni}, the authors find a domain of discontinuity for $\text{MCG}(S_{0,4})$-action on relative $\slc$-character varieties of $S_{0,4}$, the four-holed sphere. This domain of discontinuity is given by the set of representations satisfying $SBQ$-conditions (see Definition~\ref{def:SBQ}). $SBQ$-conditions are just the restriction of $BQ$-conditions for words representing simple closed curves \cite[2.4]{Maloni}. Bowditch's conjecture \cite[Question C]{Bowditch1}, in this case, states that a type-preserving $\pslr$-representation is Fuchsian iff it satisfies $SBQ$-conditions. In this paper, we prove:

\begin{restatable}{theorem}{SBQconditions}\label{thm:discrete_sls}
	The holonomy of an admissible hyperbolic cone surface satisfies $SBQ$-conditions.
\end{restatable}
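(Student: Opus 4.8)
The plan is to obtain this statement as a consequence of Theorem~\ref{thm:main1} — which already asserts that the holonomy $\rho$ of an admissible cone surface is simple-stable — together with the general inclusion $\mathcal{SS}(S_{g,n})\subset\mathcal{SBQ}(S_{g,n})$, the simple-curve analogue of the inclusion $\mathcal{PS}(F_n)\subset\mathcal{BQ}(F_n)$ recorded in \cite[Proposition 2.9]{LupiThesis}. So the substantive task is to show that a simple-stable representation satisfies the two clauses of Definition~\ref{def:SBQ}: (i) every non-peripheral simple closed curve $\gamma$ maps to a hyperbolic isometry, equivalently $|\tra\rho(\gamma)|>2$; and (ii) for any bound, only finitely many such $\gamma$ have trace of smaller modulus — a clause that is vacuous over $\pslr$ once (i) holds, but which one should nonetheless check against the precise formulation in use.

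For the reduction, fix a basepoint $x_0\in\H^2$ and let $(K,C)$ be simple-stability constants for $\rho$ in the sense of Definition~\ref{def:SimpleStability}. Let $\gamma$ be a non-peripheral simple closed curve, represented by a cyclically reduced word $w$ of length $\ell$; then the vertices $(w^n)_{n\in\Z}$ trace out a bi-infinite geodesic in the Cayley graph of $\pi_1(S_{g,n})$, and by simple-stability its image under the orbit map $n\mapsto\rho(w^n)x_0$ is a $(K,C)$-quasigeodesic in $\H^2$. Hence $d_{\H^2}(x_0,\rho(w^n)x_0)\ge \tfrac{1}{K}\lvert n\rvert\ell-C$, so these points escape at a linear rate; this rules out $\rho(w)$ being elliptic (bounded orbit) or parabolic (orbit growing at most logarithmically), so $\rho(w)$ is hyperbolic with translation length at least $\ell/K>0$, whence $|\tra\rho(\gamma)|>2$, which is (i). For (ii): over $\pslr$ the set of simple $\gamma$ with $|\tra\rho(\gamma)|\le 2$ is empty by (i); and if instead a cutoff $K'>2$ is used, then $|\tra\rho(\gamma)|\le K'$ bounds the translation length of $\rho(w)$, hence by the inequality above bounds $\ell$, leaving only finitely many conjugacy classes. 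Combined with Theorem~\ref{thm:main1}, this finishes the proof.

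Alternatively, one can argue directly and more geometrically. The holonomy $\rho$ comes with a developing map $\widetilde S\to\H^2$ that is a local isometric immersion away from the cone points. On an admissible cone surface $\Sigma$, every essential non-peripheral simple closed curve $\gamma$ has a geodesic representative $\gamma^{*}$ of positive length $\ell_\Sigma(\gamma)$, disjoint from the cone points (see Section~\ref{sec:HyperbolicConeSurfaces}; note that, every cone angle being less than $2\pi$, no local geodesic of $\Sigma$ can pass through a cone point); developing a lift of $\gamma^{*}$ exhibits an axis for $\rho(\gamma)$, so $\rho(\gamma)$ is hyperbolic with $|\tra\rho(\gamma)|=2\cosh(\ell_\Sigma(\gamma)/2)>2$. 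For (ii), when it is needed, a collar lemma for hyperbolic cone surfaces with cone angles less than $\pi$ (see~\cite{PARLIER}) produces embedded collars of a uniform width around all simple closed geodesics of bounded length; this bounds their pairwise geometric intersection numbers, and a surface of finite type carries only finitely many isotopy classes of essential simple closed curves with pairwise intersection below a fixed bound.

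The only genuine subtlety I anticipate is matching up the two languages involved: $SBQ$ is phrased via traces while simple-stability is phrased via the orbit map, so one must carefully package the standard fact that a quasi-isometric embedding along the axis of a group element forces that element to be loxodromic with translation length bounded below in terms of its cyclic word length. Once that is done the statement is a formality given Theorem~\ref{thm:main1}; in the direct approach the analogous crux is the existence of geodesic representatives for essential non-peripheral curves on admissible cone surfaces, which is in any case the kind of input needed to establish simple-stability in the first place.
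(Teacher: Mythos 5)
Your first route has a genuine gap: the inclusion you invoke, $\mathcal{SS}(S_{g,n})\subset\mathcal{SBQ}(S_{g,n})$, is not available. Simple-stability (Definition~\ref{def:SimpleStability}) only constrains cyclically reduced words representing \emph{non-separating} simple closed curves, while the $SBQ$-conditions (Definition~\ref{def:SBQ}) quantify over all simple closed curves, in particular separating ones; your quasi-geodesic argument (the orbit of $w^n$ escapes linearly, hence $\rho(w)$ is hyperbolic with translation length at least $\ell/K$) therefore says nothing about separating curves, and whether simple-stable implies $SBQ$ is precisely what is left open in the discussion around Question~\ref{Question2}. The reduction can be repaired, but not by citing the statement of Theorem~\ref{thm:main1}: one needs \emph{strong} simple-stability, which for holonomies of admissible cone surfaces follows from the \emph{proof} of Theorem~\ref{thm:main1} (Proposition~\ref{prop:StrongSimpleStabilityAdmissibleSurface}), after which the argument of \cite[Proposition 2.9]{LupiThesis} applies, as noted at the start of Section~\ref{sec:SBQconditions}. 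A further wrinkle: even then, clause (i) can only be asserted for non-peripheral curves, since the curves around cone-points map to elliptics; your remark that the set in clause (ii) is empty is correct only after excluding these finitely many peripheral classes.

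Your alternative, direct argument is essentially the paper's own proof and is sound: geodesic representatives avoiding the cone-points (Theorem~\ref{thm:scgeodesic_existence}, which genuinely uses admissibility, not merely cone-angles less than $2\pi$) develop to invariant geodesics, giving $|\tra\rho(\gamma)|=2\cosh(\ell/2)>2$, and the collar lemma of \cite{PARLIER} controls the short simple geodesics. The one difference is the finiteness step: you bound the \emph{pairwise} intersection numbers of the short geodesics and appeal to the fact that a family of pairwise non-isotopic essential simple closed curves with uniformly bounded pairwise intersections on a finite-type surface is finite --- true, but nontrivial and left uncited --- whereas the paper bounds intersection with a single fixed admissible filling (Lemma~\ref{lem:AdmissibleFillingExistence}) and proves the corresponding finiteness statement as Lemma~\ref{lem:FillingLemma}. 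Either works, but if you keep your version you should supply a reference or proof of that finiteness fact; the filling version is self-contained.
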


This proves that the hypothesis of being type-preserving in Bowditch's conjecture is necessary as these holonomies could be indiscrete.

For punctured spheres, we strengthen the definition of simple stability and introduce the notion of strong simple stability (see Section~\ref{StrongSimpleStability}). This is done by extending the simple-stability condition to separating curves, whereas in simple-stable representations, it applies only to non-separating simple curves. It follows from the proof of Theorem~\ref{thm:main1} that holonomy of an admissible hyperbolic cone surfaces is also strongly simple stable (see Proposition~\ref{prop:StrongSimpleStabilityAdmissibleSurface}). We prove the following analogous theorem for hyperbolic cone spheres:

\begin{restatable}{theorem}{PuncturedSpheresStrongSimpleStability}\label{thm:PuncturedSpheresStrongSimpleStability}
    The holonomies of admissible hyperbolic cone spheres are strongly simple-stable.
\end{restatable}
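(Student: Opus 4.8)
The plan is to build strong simple stability for hyperbolic cone spheres by mimicking the structure of the proof of Theorem~\ref{thm:main1}, but handling the fact that on a punctured sphere \emph{every} essential simple closed curve is separating, so the distinction between ``simple stability'' and ``strong simple stability'' collapses and one must make the geometric estimates uniform over all simple closed curves, separating or not. Concretely, let $\rho$ be the holonomy of an admissible hyperbolic cone sphere $S_{0,n}$, realised as an equivariant immersion $D\co \widetilde{S}_{0,n}\to\H^2$ of the universal cover. I would first fix a finite generating set / spine for $\pi_1(S_{0,n})$ coming from an ideal triangulation (or a pair-of-pants decomposition) adapted to the cone structure, and compare word length with respect to that generating set to translation length in $\H^2$ along a chosen basepoint orbit, exactly as in the simple-stability argument. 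The goal is to produce constants $K\ge 1$, $C\ge 0$ such that for \emph{every} cyclically reduced word $w$ representing a simple closed curve $\gamma$ (including separating ones, hence all of them here), the path in the orbit traced by the geodesic word for $w$ is an $(K,C)$-quasigeodesic in $\H^2$; uniformity of $(K,C)$ over the infinitely many free homotopy classes of simple closed curves is the whole content.

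The key steps, in order, are: (1) Recall from the proof of Theorem~\ref{thm:main1} that for non-separating simple closed curves (and more generally for the curves handled there) the holonomy of an admissible cone surface gives a uniform quasigeodesic constant; isolate exactly which geometric input is used --- typically a lower bound on the length of the $D$-image of an arc crossing a pair of pants, coming from admissibility (cone angles $<\pi$, or the relevant metric condition that forces the developing map to be a ``nice'' local isometry with controlled overlaps). (2) Show that a simple closed curve $\gamma$ on $S_{0,n}$, being separating, decomposes the surface into two subsurfaces, and that a geodesic representative of $\gamma$ in the cone metric has length bounded below in terms of the cone data; then lift to $\H^2$ and argue that the developed image of the axis/quasi-axis of $\rho(\gamma)$ stays within bounded Hausdorff distance of the $D$-image of the geodesic representative. (3) Upgrade from ``each $\gamma$ is sent to a quasigeodesic'' to a \emph{uniform} statement: here I would use the fact that simple closed curves on $S_{0,n}$ have combinatorial representatives (with respect to the fixed ideal triangulation) whose ``local pattern'' comes from a finite list --- each simple closed curve, written in normal form with respect to the triangulation, crosses each triangle in one of finitely many ways --- so the quasigeodesic estimate reduces to a finite local check plus a concatenation (Morse-type / local-to-global) lemma for quasigeodesics in $\H^2$. (4) Conclude that $\rho$ satisfies the strong simple stability condition of Section~\ref{StrongSimpleStability} by unwinding the definition: the uniform $(K,C)$ gives the required comparison between word length of conjugacy classes of simple closed curves and translation length under $\rho$.

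The main obstacle I expect is step (3), the uniformity over separating simple closed curves. For non-separating curves on a higher-genus surface one can often change coordinates by a mapping class and reduce to finitely many cases up to the $\mathrm{MCG}$-action, but on a punctured sphere the topological type of a separating simple closed curve is a genuine invariant (it is determined by the partition of the $n$ punctures into the two sides), so there are $\lfloor n/2\rfloor$ distinct orbits, and within each orbit one still has an infinite family; the developed images of these curves can spiral deep into the surface and wrap around several cone points, so na\"ively the quasigeodesic constant could degrade. The fix is the normal-form/finite-local-pattern argument together with the admissibility hypothesis, which should give a definite ``per-triangle'' angle or length gain that compounds additively along the curve rather than being eaten up; making that compounding argument rigorous --- i.e., proving a local-to-global principle that converts per-triangle estimates into a global quasigeodesic bound with constants independent of the curve --- is the technical heart of the proof.

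One should also check the degenerate small cases (e.g.\ $S_{0,3}$, where there are no essential simple closed curves and the statement is vacuous, and $S_{0,4}$, where the separating curves are exactly the three pants curves and one recovers a setting close to \cite{Maloni}); these serve as sanity checks that the constants behave. Modulo the local-to-global lemma, the rest of the argument is a routine adaptation of the proof of Theorem~\ref{thm:main1} and of Proposition~\ref{prop:StrongSimpleStabilityAdmissibleSurface}.
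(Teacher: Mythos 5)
Your overall plan---rerun the proof of Theorem~\ref{thm:main1}, noting that it only ever used simplicity of the curves---is the right starting point, but your proposal places the ``technical heart'' in a step that the paper never needs, and leaves exactly that step unproven. In the paper's argument there is no per-triangle, normal-form, or local-to-global compounding estimate: uniformity of the constants over all simple closed curves is automatic because nothing is ever estimated curve by curve. One builds the universal cover from a convex fundamental polygon (Remark~\ref{rem:UniversalCover}), truncates cone neighbourhoods to get a $\pi_1$-invariant core $\mathcal{K}$ with compact quotient, and invokes the \v{S}varc--Milnor quasi-isometry of Lemma~\ref{lem:QuasiIsometry} between the Cayley graph and $(\mathcal{K},d_{\mathcal{K}})$, whose constants are independent of any curve. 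The only curve-dependent input is that for a subword $w$ of the infinite word of a simple closed curve, the geodesic of $\widetilde{S\setminus P}$ from $\widetilde{x}$ to $w\cdot\widetilde{x}$ misses the cone points: this follows from Proposition~\ref{prop:selfintersection} (a simple closed curve cannot wind around a cone point more than once) together with admissibility, Proposition~\ref{thm:conearcs} and Lemma~\ref{lem:cone_nbhd}; hence the geodesic stays in $\mathcal{K}$ and its developing image is an honest $\mathbb{H}^2$-geodesic, giving $d_{\mathcal{K}}(\widetilde{x}, w\cdot\widetilde{x}) = d_{\mathbb{H}^2}(O,\rho(w)\cdot O)$ on the nose. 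Your step (3), by contrast, is both missing from your write-up (``modulo the local-to-global lemma'') and genuinely problematic: quasigeodesity is not a local property, and concatenating uniformly quasigeodesic pieces in $\mathbb{H}^2$ does not yield a quasigeodesic without additional angle or progress control---particularly delicate here since $\rho$ may be indiscrete, so the orbit map is not a quasi-isometric embedding on all of $\pi_1$. As written, the proposal does not close.

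You also omit the one ingredient that genuinely is new for spheres, namely Proposition~\ref{prop:PolygonForConeSpheres}: an admissible hyperbolic cone sphere is obtained from a convex hyperbolic polygon by side identifications, constructed by joining the cone points cyclically by geodesics and then coning off from an interior point. The genus~$\geq 1$ construction of Proposition~\ref{prop:admissiblePolygon} starts from two simple closed geodesics meeting exactly once, which do not exist on a punctured sphere, so it must be replaced; your ``ideal triangulation or pants decomposition adapted to the cone structure'' glosses over this. Once the polygon is in hand, the proof of Theorem~\ref{thm:main1}---and the observation in Proposition~\ref{prop:StrongSimpleStabilityAdmissibleSurface} that only simplicity, not non-separation, is used---goes through verbatim; your step (2) (length lower bounds for separating curves, Hausdorff-distance comparison with axes) and the count of mapping class group orbits of separating curves play no role.
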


For surfaces with strictly more than one puncture, Theorem~\ref{thm:main1} exhibits a family of simple-stable representations that are not primitive-stable. For once-punctured surfaces, we have no examples of representations that are simple-stable but not primitive-stable. In particular, candidate examples such as the holonomies of hyperbolic cone surfaces are shown to be primitive-stable in the case of once punctured surfaces (see Theorem~\ref{thm:main1}). Constructing examples of simple-stable but not primitive-stable representations in this setting would be interesting. Furthermore, finding simple-stable representations beyond holonomies of hyperbolic cone surfaces is also an interesting problem. We pose the question as follows:

\begin{question}\label{Question1}
    Are there examples of simple-stable representations that are neither primitive-stable nor holonomies of hyperbolic cone surfaces?
\end{question}
The concept of simple-stability was originally introduced in the context of closed surfaces (see \cite{Minsky}), where it is readily verified that Fuchsian representations are simple-stable. Another interesting question is whether there exist non-Fuchsian representations that are simple-stable. This question is closely connected to the Bowditch conjecture mentioned above. 

In the case of the one-holed torus, we know that $\mathcal{PS}(F_2) = \mathcal{BQ}(F_2)$ (see \cite[Theorem I]{Lee-Xu}, \cite{CarolineSeriesPSBQ}). Let $\mathcal{SBQ}(S_{1,1})$ be the space of representations satisfying the $SBQ$-conditions. Then, for $\pslr$ representations, $\mathcal{PS}(F_2) = \mathcal{SS}(S_{1,1})$ and $\mathcal{BQ}(F_2) = \mathcal{SBQ}(S_{1,1})$, since primitive elements of $F_2$ are exactly the homotopy classes of non-peripheral simple closed curves on torus. Hence, we can conclude that $\mathcal{SS}(S_{1,1}) = \mathcal{SBQ}(S_{1,1})$ for the one-holed torus. Denote by $\mathcal{SSS}$, the set of strongly simple-stable representations. For every punctured surface, we know that $\mathcal{SSS} \subset \mathcal{SBQ}$. This leads naturally to the following question:

\begin{question}\label{Question2}
     Given a punctured surface $S_{g,n}$, is $\mathcal{SSS}(S_{g,n})$ same as $\mathcal{SBQ}(S_{g,n})$?
\end{question}

It is clear that strongly simple-stable representations are simple-stable but it remains uncertain whether simple-stable representations are also strongly simple-stable. Consequently, Question~\ref{Question2} raises the issue of whether simple-stable representations satisfy the $SBQ$-conditions.

\subsection*{Plan of the paper}In Section~\ref{sec:HyperbolicConeSurfaces}, we cover the basic facts about hyperbolic cone surfaces. Some of the results proved in this section can be found in \cite{PARLIER, SPTAN}. We have added details to certain proofs and included them for completeness. In Section~\ref{sec:CharacterVariety}, we provide the basics of relative $\pslr$-character varieties. In Section~\ref{sec:SimpleStability}, we define simple stability of representations and prove that $\mathcal{SS}(S_{g,n})$ forms a domain of discontinuity of $\text{MCG}(S_{g,n})$-action. In Section~\ref{sec:Holonomies}, we prove Theorems~\ref{thm:main1} and \ref{thm:PuncturedSpheresStrongSimpleStability}. In Section~\ref{sec:SBQconditions}, we define $SBQ$-conditions and see that Theorem~\ref{thm:discrete_sls} is a consequence of the proof of Theorem~\ref{thm:main1}. We provide an alternate proof of Theorem~\ref{thm:discrete_sls} independent of Theorem~\ref{thm:main1}.

\subsection*{Acknowledgements} I would like to express my heartfelt gratitude to my PhD advisor, Subhojoy Gupta, for introducing me to the works of Minsky and Goldman. I am deeply thankful for his invaluable insights, numerous discussions, and his thoughtful suggestions, which significantly improved the draft of this paper.
Special thanks are due to Siddhartha Gadgil for many insightful and fruitful discussions throughout the course of this work. I would also like to extend my appreciation to Gianluca Faraco for several engaging and productive conversations. Finally, I acknowledge the National Board for Higher Mathematics for their support through a fellowship, which made this work possible.

\section{Hyperbolic Cone Surfaces}\label{sec:HyperbolicConeSurfaces}

\begin{definition}
    A \textit{hyperbolic cone surface} is a surface obtained by pasting finitely many hyperbolic geodesic triangles. 	A vertex in a triangle of the triangulation is contained in more than one triangle. The \textit{cone-angle} of a vertex is the sum of all these vertex angles. A vertex whose cone-angle is not equal to $2 \pi$ is called a \textit{cone-point}. 
\end{definition}
\begin{example} \leavevmode
	\begin{enumerate}
		\item Any hyperbolic surface is a hyperbolic cone surface without any cone-points.
		\item Let $\mathcal{T}$ be a hyperbolic geodesic triangle with sides labeled $a,b, c$ and angles $\alpha,\beta,\gamma$. Consider another copy of $\mathcal{T}$, say $\mathcal{T}^\prime$, with sides labeled as $a', b', c'$. By gluing $\mathcal{T}$ and $\mathcal{T}^\prime$ along the corresponding sides (see Figure~\ref{fig:hypcone}), we obtain a hyperbolic cone sphere with 3 cone-points with cone-angles $2 \alpha, 2 \beta$ and $2 \gamma.$ This process of gluing is called doubling of a geodesic triangle. In fact, doubling of any hyperbolic geodesic polygon is a hyperbolic cone surface.
	\end{enumerate}
\end{example}

\begin{figure}[h]
            \labellist
            \small
            
            \pinlabel $\alpha$ at 90 245
		\pinlabel $\beta$ at 40 45
		\pinlabel $\gamma$ at 240 95

            \pinlabel $2\alpha$ at 470 240
		\pinlabel $2\beta$ at 420 45
		\pinlabel $2\gamma$ at 610 95
            
		\endlabellist
		\includegraphics[width=0.7\linewidth]{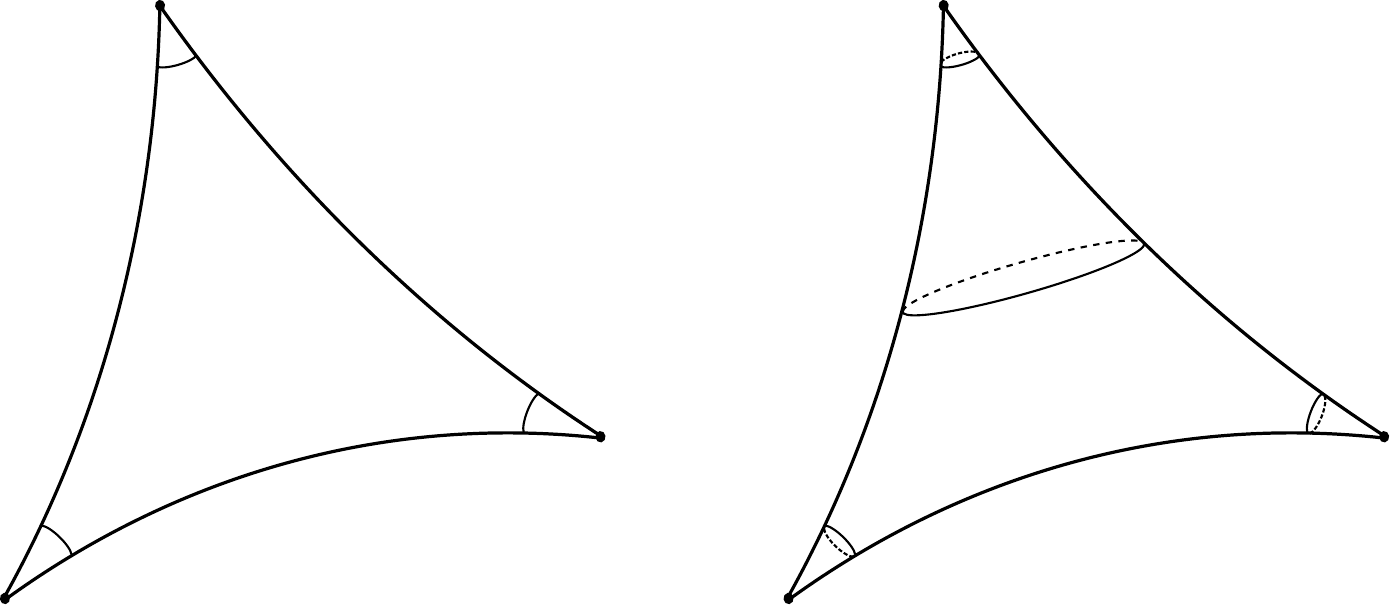}
            \caption{A cone sphere with 3 cone-points obtained by doubling the geodesic triangle on the left.}
		\label{fig:hypcone}
	\end{figure}

\noindent Note that if we remove cone-points from $S$, we get an incomplete hyperbolic surface whose metric completion is $S$.

\begin{definition} \label{HyperbolicCone}
	Let $\left( \mathbb{D}, d_{\mathbb{D}} \right)$ be the Poincar\'{e} disk equipped with the Poincar\'{e} distance function $d_{\mathbb{D}}$. For $\theta \in (0, 2 \pi )$ and $h \in \mathbb{R}^{+}$, define 
	$$\mathcal{S}_{h,\theta} = \{z \in \mathbb{D} ~ | ~ d_{\mathbb{D}}(0,z) \leq h \text{ and } 0 \leq \text{ arg }z \leq \theta\}.$$
	We define a \textit{hyperbolic cone} $\mathscr{C}_{h,\theta}$ with cone-angle $\theta$ and slant height $h$ to be the quotient of $\mathcal{S}_{h,\theta}$ obtained after gluing the geodesics $\text{arg }z=0$ and $\text{arg }z=\theta$ via the elliptic isometry $z \mapsto e^{i\theta}z$ fixing the origin.
\end{definition}

From now on, denote $S_{h,\theta} \setminus \{0\}$ by $\mathcal{S}^\ast_{h,\theta}$ and $\mathcal{C}_{h,\theta} \setminus \{0\}$ by $\mathcal{C}^\ast_{h,\theta}$. Consider the space $\left( \mathcal{S}^\ast_{h, \theta} \times \mathbb{Z} \right) / \sim$, where the equivalence relation is given as follows:
\[(r e^{i \theta},n) \sim (r, n+1)\]
\begin{proposition}
	$\widetilde{\mathcal{C}^\ast_{h, \theta}} \coloneqq \left( \mathcal{S}^\ast_{h, \theta} \times \mathbb{Z} \right) / \sim$ is a universal cover of $\mathcal{C}^\ast_{h, \theta}$.
\end{proposition}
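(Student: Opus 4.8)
The plan is to exhibit an explicit covering map from $\widetilde{\mathcal{C}^\ast_{h,\theta}}$ to $\mathcal{C}^\ast_{h,\theta}$ and then verify the covering space axioms, finally checking simple-connectivity by identifying the universal cover concretely. First I would define the map $p\co \widetilde{\mathcal{C}^\ast_{h,\theta}} \to \mathcal{C}^\ast_{h,\theta}$ on representatives by $p([(z,n)]) = [z]$, where the outer brackets on the right denote the class in the quotient $\mathcal{C}^\ast_{h,\theta} = \mathcal{S}^\ast_{h,\theta}/(z \sim e^{i\theta}z)$. This is well-defined because the generating relation $(re^{i\theta}, n) \sim (r, n+1)$ is sent to the pair $re^{i\theta} \sim r$ of points that are already identified in $\mathcal{C}^\ast_{h,\theta}$; it is continuous and surjective by construction. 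To see it is a covering map, I would cover $\mathcal{C}^\ast_{h,\theta}$ by two kinds of evenly-covered open sets: (i) the image of a small open "sector" $\{r e^{i\varphi} : 0 < \varphi < \theta,\ 0 < d_\mathbb{D}(0,r) < h\}$ not touching the gluing geodesic, whose preimage is a disjoint union indexed by $n \in \mathbb{Z}$ of homeomorphic copies, and (ii) a neighbourhood of a point on the seam $\{\arg z = 0\} \sim \{\arg z = \theta\}$, which in $\widetilde{\mathcal{C}^\ast_{h,\theta}}$ lifts to a disjoint union of "bigon" neighbourhoods each formed by gluing a piece near $\arg z = \theta$ in sheet $n$ to a piece near $\arg z = 0$ in sheet $n+1$. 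In both cases $p$ restricts to a homeomorphism on each component, so $p$ is a covering map. (One should also note $\mathcal{C}^\ast_{h,\theta}$ is a manifold: it is an annulus, since removing the cone point from the hyperbolic cone yields a punctured disc.)

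Next I would show $\widetilde{\mathcal{C}^\ast_{h,\theta}}$ is connected and simply connected. Connectedness is immediate since each sheet $\mathcal{S}^\ast_{h,\theta} \times \{n\}$ is connected and consecutive sheets are glued along the geodesic $\arg z = \theta$. For simple-connectivity, the cleanest route is to build an explicit homeomorphism $\widetilde{\mathcal{C}^\ast_{h,\theta}} \cong \mathcal{S}^\ast_{h,\theta} \times \mathbb{R}$ (or onto a convex-in-angle strip in the upper half plane / a product of an interval with $\mathbb{R}$): parametrise a point in sheet $n$ by $(d_{\mathbb{D}}(0,z), \arg z + n\theta) \in (0,h] \times \mathbb{R}$, and check that the gluing relation $(re^{i\theta},n)\sim(r,n+1)$ is exactly what makes this assignment well-defined and bijective, with continuous inverse. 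Since $(0,h] \times \mathbb{R}$ is contractible, $\widetilde{\mathcal{C}^\ast_{h,\theta}}$ is simply connected. Combined with the previous paragraph, $p$ is a universal covering.

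The main obstacle, and the part needing genuine care rather than bookkeeping, is the verification at the seam: one must check that the identification $(re^{i\theta},n)\sim(r,n+1)$ produces a genuine manifold point with a Euclidean-disc (rather than, say, cone or branched) neighbourhood in $\widetilde{\mathcal{C}^\ast_{h,\theta}}$, and that the local picture there maps homeomorphically onto the corresponding seam neighbourhood downstairs. Concretely this amounts to observing that near the seam the cone $\mathcal{C}^\ast_{h,\theta}$ locally looks like $\mathbb{H}^2$ (the cone angle $\theta$ is "unrolled" precisely because we pass to $\mathbb{Z}$-many sheets), which is where the "angle coordinate in $\mathbb{R}$" description makes everything transparent. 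Everything else — well-definedness of $p$, the product-of-sectors evenly-covered sets, connectedness — is routine once the seam is handled, so I would organise the write-up around first fixing the coordinate chart $(0,h]\times\mathbb{R}$ and then reading off both the covering property and simple-connectivity from it.
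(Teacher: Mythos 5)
Your verification that the map is a covering is essentially the paper's argument: the same induced map $(\widetilde{x},n)\mapsto q(\widetilde{x})$, with evenly covered neighbourhoods chosen according to where the point sits (away from the seam, versus on the seam, where two half-pieces coming from adjacent sheets glue to a single neighbourhood upstairs). One small omission in your write-up: the open sector $\{0<\arg z<\theta,\ 0<d_{\mathbb{D}}(0,z)<h\}$ together with the seam neighbourhoods does not cover the points of the base arc $d_{\mathbb{D}}(0,z)=h$ with $0<\arg z<\theta$; as in the paper these need half-disc neighbourhoods contained in a single sheet, which is a trivially fixable point rather than a real gap. Where you genuinely diverge is in proving simple-connectivity: you construct an explicit homeomorphism of $\widetilde{\mathcal{C}^\ast_{h,\theta}}$ onto the half-open strip $(0,h]\times\mathbb{R}$ via the unrolled coordinate $\bigl(d_{\mathbb{D}}(0,z),\,\arg z+n\theta\bigr)$ and conclude contractibility, whereas the paper argues abstractly: $\pi_1(\mathcal{C}^\ast_{h,\theta})\cong\mathbb{Z}$, the cover has infinite degree, and the only infinite-index subgroup of $\mathbb{Z}$ is trivial, so the cover is simply connected. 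Your route costs a little bookkeeping (checking the quotient topology matches the strip, i.e.\ that the bijection has continuous inverse) but buys a concrete model of the cover that the paper itself uses informally afterwards --- the ``unrolled'' picture of sectors underlying the proof of Proposition~\ref{thm:conearcs} and the figures there --- and it yields contractibility, not just simple-connectivity. The paper's route is shorter but presupposes the classification of covers via subgroups of $\pi_1$ of the annulus. Both arguments are correct and complete the proposition.
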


\begin{proof}
	The map $\pi: \widetilde{\mathcal{C}^\ast_{h, \theta}} \rightarrow \mathcal{C}^\ast_{h,\theta}$ is the obvious map induced by the quotient map $q: \mathcal{S}_{h,\theta} \rightarrow \mathcal{C}_{h,\theta}$ i.e.,
	\[\pi(\widetilde{x},n) = q(\widetilde{x}).\]
	
	Now, we find the evenly covered neighbourhoods around every point in $\mathcal{C}_{h,\theta}^\ast$ (See Figure~\ref{fig:evennbhds_universalcovering}). Let $x \in \mathcal{C}^\ast_{h,\theta}$, then $q^{-1}(x)$ lies in $\mathcal{S}_{h,\theta}^\ast$. If $q^{-1}(x) = \widetilde{x}$ lies in the interior of $\mathcal{S}_{h,\theta}^\ast$, we can take the neighbourhood $U$ of $\widetilde{x}$ that lies completely inside the interior. If $q^{-1}(x) = \widetilde{x} \in \{(h, \alpha): 0 < \alpha < \theta \} \subset \mathcal{S}_{h,\theta}$, then take the half-disc around the $\widetilde{x}$ to be the neighbourhood $U$. If $q^{-1}(x) \in \{(r,\alpha): \alpha=0 \text{ or } \theta \}$, then observe that $x$ has two preimages $\widetilde{x}^1, \widetilde{x}^2$  then take the union of half-disks surrounding both $\widetilde{x}^1, \widetilde{x}^2$ to be the neighbourhood $U$. In all the cases, $U$ forms the evenly covered neighbourhood of $x$. Hence, $\pi$ is a covering map.
	
	\begin{figure}[h]
		\includegraphics[width=.4\linewidth]{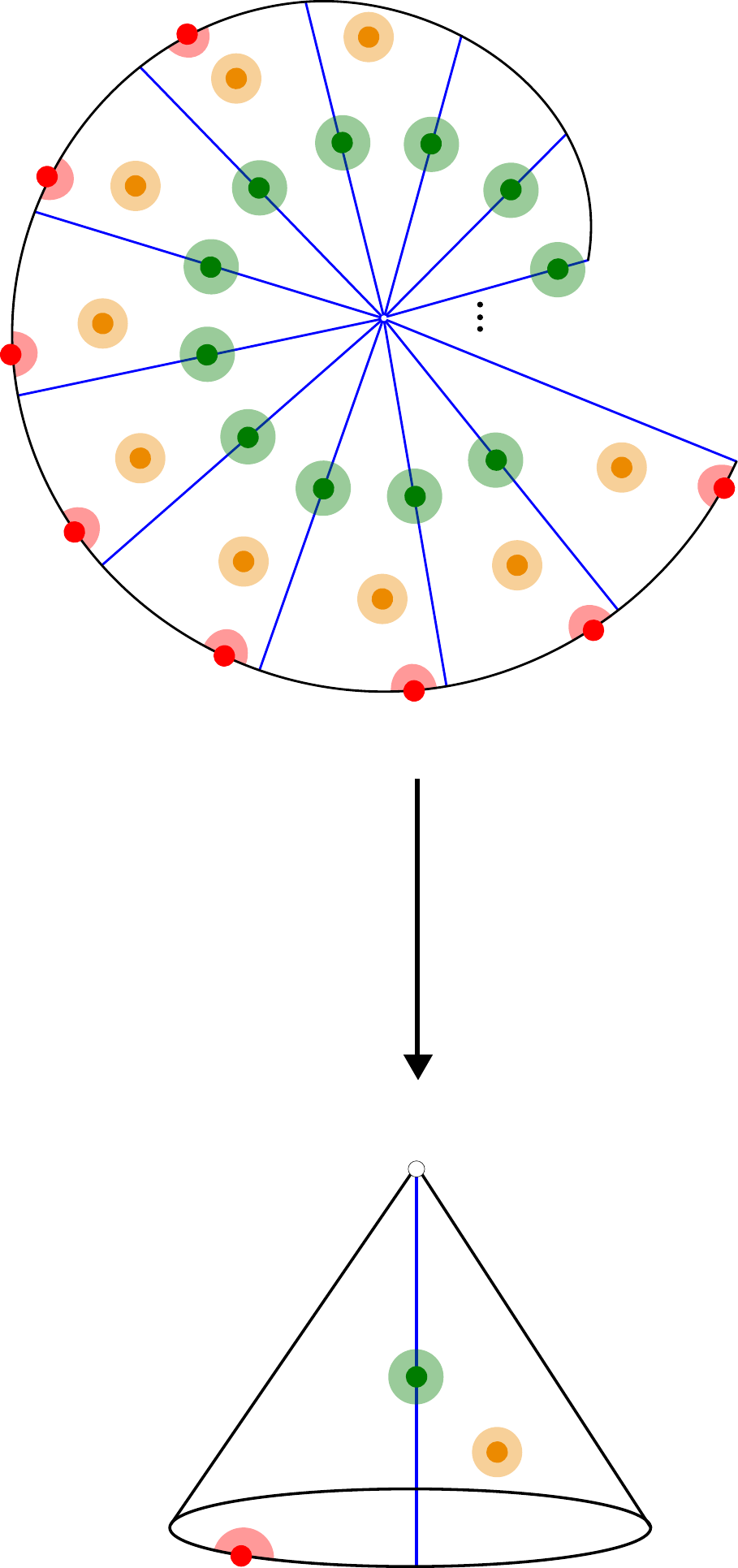}
		\captionof{figure}{$\pi: \widetilde{\mathcal{C}^\ast_{h,\theta}} \rightarrow \mathcal{C}^\ast_{h,\theta}$ }
		\label{fig:evennbhds_universalcovering}
	\end{figure}
 
	Now, since $\widetilde{\mathcal{C}^\ast_{h, \theta}}$ is an infinite degree covering of $\mathcal{C}^\ast_{h,\theta}$ and the fundamental group of $\mathcal{C}_{h,\theta}^\ast$ is $\mathbb{Z}$, $\widetilde{\mathcal{C}^\ast_{h, \theta}}$ is simply-connected.
\end{proof}

 Note that $\mathcal{C}^\ast_{h, \theta}$ is homeomorphic to an annulus and thus the universal cover $\widetilde{\mathcal{C}^\ast_{h, \theta}}$ is homeomorphic to $\mathbb{R}^2$. The metric completion $\overline{\widetilde{\mathcal{C}^\ast_{h,\theta}}}$ of $\widetilde{\mathcal{C}^\ast_{h,\theta}}$ will contain a point $\overline{0}$ with infinite cone-angle and will not be locally compact at that point \cite[Section 3.5]{BM}. We will think of $\widetilde{\mathcal{C}^\ast_{h,\theta}}$ as lying inside $\overline{\widetilde{\mathcal{C}^\ast_{h,\theta}}}$ as it would be helpful in visualising the cone-point. 
 
 Consider the homotopy class fixing endpoints of an arc $\sigma$ whose endpoints $\widetilde{x}, \widetilde{y}$ lie on the base of the cone $\mathcal{C}_{h,\theta}$ and $\sigma \cap \partial \mathcal{C}_{h,\theta} = \{\widetilde{x}, \widetilde{y}\}$. Any such homotopy class (upto a sign) of an arc is uniquely determined by the minimum number of times a representative of the homotopy class intersects the slant line of the cone transversely and without triple points. Call this the \textit{intersection number} of the homotopy class. We say that the arc is \textit{non-trivial}, if the arc cannot be homotoped to a single point on the boundary fixing endpoints. Fix $\widetilde{x}, \widetilde{y}$ on the base of the cone (not necessarily distinct). Denote by $\mathcal{H}_k$ the homotopy class of arcs with intersection number $k$.
 
 \begin{proposition}
     For every $k>0$, there exists a geodesic(length-minimising) arc $g_k$ realising the infimal length of the homotopy class $\mathcal{H}_k$.
 \end{proposition}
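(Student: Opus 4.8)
The plan is to work in the metric completion $\overline{\widetilde{\mathcal{C}^\ast_{h,\theta}}}$ and carry out a direct length-minimisation argument over the homotopy class $\mathcal{H}_k$ for a fixed $k > 0$. First I would fix endpoints $\widetilde{x}, \widetilde{y}$ on the base of the cone and let $\ell = \inf\{\operatorname{length}(\gamma) : \gamma \in \mathcal{H}_k\}$; this infimum is finite since $\mathcal{H}_k$ is nonempty (one can exhibit a piecewise-geodesic representative winding $k$ times), and it is bounded below by $0$, so $\ell \in [0,\infty)$. The key point to rule out is $\ell = 0$, but a curve in $\mathcal{H}_k$ with $k>0$ must cross $k$ distinct lifts of the slant line, and since the images of these slant lines in $\widetilde{\mathcal{C}^\ast_{h,\theta}}$ are at positive pairwise distance away from a neighbourhood of $\overline 0$, any representative has length bounded below by a positive constant depending on $k$ and on how close $\widetilde x, \widetilde y$ are to $\overline 0$; hence $\ell > 0$. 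I would take a minimising sequence $\gamma_i \in \mathcal{H}_k$ with $\operatorname{length}(\gamma_i) \to \ell$, and after reparametrising by (constant multiples of) arclength, these are uniformly Lipschitz maps from $[0,1]$ into a fixed compact region of $\overline{\widetilde{\mathcal{C}^\ast_{h,\theta}}}$.

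The second step is a compactness argument. The curves $\gamma_i$ all lie in the closed ball of radius $\ell+1$ around $\widetilde x$ in $\overline{\widetilde{\mathcal{C}^\ast_{h,\theta}}}$; this set is compact (it is a finite union of sectors of the Poincaré disk, glued), even though the full completion fails to be locally compact at $\overline 0$, because staying at bounded distance from $\widetilde x$ keeps us at bounded distance from $\overline 0$ along the spiral. By Arzelà–Ascoli, a subsequence of the $\gamma_i$ converges uniformly to a Lipschitz path $g_k$ from $\widetilde x$ to $\widetilde y$, and by lower semicontinuity of length under uniform convergence $\operatorname{length}(g_k) \le \ell$. I would then argue that $g_k$ lies in the class $\mathcal{H}_k$: the intersection number with the slant lines is a homotopy invariant, and since $g_k$ is a uniform limit of curves each crossing exactly $k$ slant lines (transversally), the limit crosses at least $k$ of them, so its intersection number is $\ge k$; combined with $\operatorname{length}(g_k) \le \ell < $ (length forced by intersection number $k+1$ or more), we get intersection number exactly $k$, i.e. $g_k \in \mathcal{H}_k$, whence $\operatorname{length}(g_k) = \ell$.

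The final step is to identify $g_k$ as a geodesic in the sense of being locally length-minimising: this is automatic since any subarc of $g_k$ with fixed endpoints, if not length-minimising among nearby arcs, could be shortened, producing a competitor in $\mathcal{H}_k$ of strictly smaller length (the local shortening does not change the homotopy class rel the base, hence not the intersection number) — contradicting minimality. One should also observe that $g_k$ meets $\overline 0$ in at most isolated points and is otherwise a concatenation of hyperbolic geodesic segments; in fact in the interior of $\widetilde{\mathcal{C}^\ast_{h,\theta}}$, away from $\overline 0$, the metric is smooth hyperbolic, so $g_k$ is a genuine hyperbolic geodesic there, possibly passing through $\overline 0$ (this is the analogue of a geodesic through a cone point).

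I expect the main obstacle to be the compactness/finiteness issue caused by the non-local-compactness of the completion at $\overline 0$: one must be careful that a minimising sequence does not "escape to $\overline 0$" by spiralling, and that the relevant bounded sets really are compact. This is handled by the observation above that bounded distance from the fixed endpoint $\widetilde x$ forces bounded distance from $\overline 0$ (since the spiral has a definite "pitch"), so that the closed balls we work in are finite unions of compact circular sectors; once this is pinned down, the rest is a standard direct-method argument. A secondary subtlety is confirming that the intersection number passes correctly to the limit — i.e. that no crossings are lost in the limit below $k$ nor gained above $k$ — which follows from combining the topological lower bound (crossings are preserved under uniform limits up to not being destroyed, so $\ge k$) with the length upper bound forcing $\le k$.
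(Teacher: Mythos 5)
Your direct-method skeleton (minimising sequence, reparametrisation, Arzel\`a--Ascoli, lower semicontinuity, local shortening) is the same as the paper's, but the extra step in which you place the limit $g_k$ back inside $\mathcal{H}_k$ is a genuine error, and it is exactly the point the paper deliberately avoids: immediately after this proposition the paper remarks that it is unclear whether $g_k$ belongs to $\mathcal{H}_k$, and Proposition~\ref{thm:conearcs} together with Remark~\ref{rem:HomotopyClassPreserved} shows that for every $k \geq \left\lfloor \pi/\theta \right\rfloor$ the minimiser passes through the cone-point and the infimal lengths of all these classes coincide (they all equal $2h$). Consequently your separation argument ``$\operatorname{length}(g_k) \le \ell < $ length forced by intersection number $k+1$ or more'' has no content in that range: there is no strict gap between the infima of $\mathcal{H}_k$ and $\mathcal{H}_{k+1}$, the infimum is not attained within the class, and transversal crossings with the lifts of the slant line are not controlled in the limit precisely because the limit can hit $\overline{0}$, where all those lifts meet. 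Since you deduce $\operatorname{length}(g_k)=\ell$ \emph{from} the membership claim, the attainment statement itself is left unjustified in the only delicate case. The repair is to argue instead that the lifted limit runs from $\widetilde{x}$ to the fixed lift $\widetilde{y}$ in level $k$, so its length is at least the completion distance between these two points, and that this distance equals $\ell$ because an arc through the cone-point can be pushed off $\overline{0}$ at small radius at arbitrarily small extra cost while staying in the class; this is implicit in the paper's universal-cover picture.

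A secondary misstep is the compactness claim: closed balls about $\widetilde{x}$ in $\overline{\widetilde{\mathcal{C}^\ast_{h,\theta}}}$ are \emph{not} compact. Points at a fixed small radius $r$ from $\overline{0}$ but with angular coordinate tending to infinity all lie within distance $d(\widetilde{x},\overline{0})+r$ of $\widetilde{x}$ (via paths through $\overline{0}$), yet have pairwise distances bounded below by a positive constant, so they admit no convergent subsequence; this is the failure of local compactness at $\overline{0}$ that the paper cites. What actually saves the Arzel\`a--Ascoli step is that curves with intersection number $k$ lift into the union of $k+1$ consecutive closed sectors, which is compact --- or, simpler still, one can run the whole argument downstairs in the compact cone $\mathcal{C}_{h,\theta}$, as the paper does.
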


 \begin{proof}
 There are two cases to consider, one when the endpoints of the arc are the same and the other, when they are distinct.
 
 Suppose the endpoints are same. If the intersection number is zero, then the arc is trivial since the interior of the arc completely lies in the interior of $S_{h,\theta}^\ast \subset C_{h,\theta}^\ast$. For every other intersection number $k > 0$, length of every arc in the corresponding homotopy class $\mathcal{H}_k$ is bounded below by a non-zero value. This is because the shortest non-trivial arc starting and ending at the same point on the base of $\mathcal{C}_{h, \theta}$, is the geodesic joining the endpoints of the circular base of the sector $\mathcal{S}_{h,\theta}$ and it is dependent only on $\theta$ and $h$. Call the length of this geodesic $M_{\theta,h}$. For every $k$, there exists a sequence $\alpha^k_n$ in $\mathcal{H}_k$ of arcs such that $l(\alpha_k^n) \rightarrow \inf_{\sigma \in \mathcal{H}_k} l(\sigma) \geq M_{\theta,h}$. Now, by Arzela-Ascoli theorem, there exists an arc $g_k$ in $\mathcal{C}_{h,\theta}$ such that the infimum is realised.
 
 Suppose the endpoints are different. Firstly, every arc with different endpoints is non-trivial. The length of every arc in the homotopy class $\mathcal{H}_k$ is also bounded below by $M_{\theta^\prime}$, where $\theta^\prime$ is the angle subtended by the geodesic arc joining the endpoints of the circular arc at the center of the sector $\mathcal{S}_{h, \theta}$. The same argument follows and for every $k \geq 0$, there exists an arc $g_k$ such that the infimal length which is at least $M_{\theta^\prime}$ of a homotopy class is realised. 
 \end{proof}

  It is important to note here that in both the cases it is unclear whether $g_k$ belongs to the homotopy class $\mathcal{H}_k$. The following proposition gives a criteria for when $g_k$ belongs to $\mathcal{H}_k$.
 
 \begin{proposition} \label{thm:conearcs}
	Assume $\theta < \pi$, then $g_k$ does not pass through the cone-point if and only if $k < \left \lfloor \dfrac{\pi}{\theta} \right \rfloor$.
 \end{proposition}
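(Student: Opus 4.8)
The plan is to lift the whole question to the universal cover and reduce it to a comparison inside $\H^2$. Coordinatise $\widetilde{\mathcal{C}^\ast_{h,\theta}}$ as $\{(\rho,\phi):0<\rho\le h,\ \phi\in\R\}$ with the metric $d\rho^2+\sinh^2\rho\,d\phi^2$, i.e.\ hyperbolic geodesic polar coordinates about the cone-point; then it is locally isometric to $\H^2$, the $n$-th sheet $\mathcal{S}^\ast_{h,\theta}\times\{n\}$ is $\{n\theta\le\phi\le(n+1)\theta\}$, the lifts of the slant line are the radial rays $\{\phi\in\theta\Z\}$, and the cone-point corresponds to the added point $\overline{0}$ obtained in the limit $\rho\to 0$, which carries infinite cone-angle. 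Lifting the prescribed endpoints so that $\widetilde{x}$ sits on the $0$-th sheet, a representative of $\mathcal{H}_k$ lifts to a path from a lift $P$ of $\widetilde{x}$ to the lift $Q$ of $\widetilde{y}$ on the $k$-th sheet; since $\widetilde{\mathcal{C}^\ast_{h,\theta}}$ is simply connected, $g_k$ itself lifts to a path from $P$ to $Q$, and the developed positions of $P,Q$ lie at distance $h$ from $\overline{0}$ and subtend winding angle $k\theta$ about it. (The precise dictionary between the combinatorial intersection number $k$ and this developed winding angle needs to be set up with care; I would also record that $\{\rho\le h\}$ is geodesically convex, since $\rho=d(\overline{0},\cdot)$ is convex along geodesics.)

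Next I would weigh the two candidate minimisers. If $k\theta<\pi$, develop into $\H^2$: $P$ and $Q$ lie at hyperbolic distance $h$ from the centre $O$ (the image of $\overline{0}$) with $\angle POQ=k\theta<\pi$, so the geodesic segment $[P,Q]$ misses $O$, stays in $\{\rho\le h\}$, and descends to an arc of $\mathcal{C}^\ast_{h,\theta}$ in $\mathcal{H}_k$ whose length $l$ satisfies, by the hyperbolic law of cosines, $\cosh l=\cosh^2h-\sinh^2h\cos(k\theta)<\cosh^2h+\sinh^2h=\cosh 2h$, hence $l<2h$. The competing path runs through the cone-point: the concatenation of the radial segments $[P,\overline{0}]$ and $[\overline{0},Q]$ has length exactly $2h$, and because $\overline{0}$ has infinite cone-angle this broken path makes angle $\min(k\theta,\pi)$ at $\overline{0}$, so it is a local geodesic — hence, by non-positive curvature, a shortest path — exactly when $k\theta\ge\pi$, while for $k\theta<\pi$ its corner at $\overline{0}$ can be cut and it is not minimising.

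Combining the two computations: when $k\theta<\pi$ the infimal length over $\mathcal{H}_k$ equals $l<2h$ and is realised by $[P,Q]$, which misses the cone-point, so $g_k$ does not pass through it; when $k\theta\ge\pi$ no geodesic segment missing $\overline{0}$ can realise the required winding (a hyperbolic geodesic segment subtends angle $<\pi$ about any point off it), the infimal length over $\mathcal{H}_k$ equals $2h$ — approached from above by arcs that spiral in to small radius, wind around, and spiral back out — and it is attained only by a path through $\overline{0}$, so $g_k$ does pass through the cone-point. Reading off the threshold $k\theta<\pi$, i.e.\ $k<\lfloor\pi/\theta\rfloor$ (with the boundary case $k\theta=\pi$, in which the segment $[P,Q]$ itself runs through $O$, counted as passing through the cone-point), gives the proposition.

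I expect the genuine work to be the sharp assertion in the regime $k\theta\ge\pi$: that $\inf_{\sigma\in\mathcal{H}_k}l(\sigma)=2h$ and that no minimising arc avoids $\overline{0}$. This is precisely where the failure of local compactness of $\overline{\widetilde{\mathcal{C}^\ast_{h,\theta}}}$ at $\overline{0}$ blocks the Arzela--Ascoli argument used in the previous proposition; one must instead argue directly — developing sub-arcs into $\H^2$ and combining the $<\pi$ angle bound with the convexity of the radial coordinate — or equivalently invoke the $\mathrm{CAT}(-1)$ geometry of $\overline{\widetilde{\mathcal{C}^\ast_{h,\theta}}}$ together with the explicit distance formula in a hyperbolic metric cone, which gives $d(P,Q)=2h$ as soon as the developed winding reaches $\pi$. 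The remaining, more bookkeeping-heavy point is pinning down the correspondence between the intersection number $k$ and the developed winding angle $k\theta$ and the resulting floor in the statement.
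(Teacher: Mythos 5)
Your overall route is the same as the paper's: lift to $\widetilde{\mathcal{C}^\ast_{h,\theta}}$, compare the straight geodesic between the two lifts with the broken path through the apex, and decide which minimises using convexity/negative curvature. The paper's proof is a terser version of exactly this (the two lifts lie in a convex sector when $k<\left\lfloor\pi/\theta\right\rfloor$, and the shortest path runs through the cone-point otherwise), so your law-of-cosines estimate and the CAT$(-1)$ argument for the regime where the developed angle is at least $\pi$ are added detail rather than a different method.

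The one step that does not go through as written is the final translation of the threshold. Your development places the two lifts at angular separation exactly $k\theta$, and you then read off ``$k\theta<\pi$, i.e.\ $k<\left\lfloor\pi/\theta\right\rfloor$''; these are not equivalent (take $\theta=2\pi/5$ and $k=2$: then $k\theta<\pi$ but $k=\left\lfloor\pi/\theta\right\rfloor$). The discrepancy sits precisely in the dictionary you deferred: the endpoints $\widetilde{x},\widetilde{y}$ are arbitrary fixed points on the base, so if their angular coordinates within their respective sheets are $\alpha,\beta\in[0,\theta]$, the developed angle is $k\theta+\beta-\alpha$, which ranges over $[(k-1)\theta,(k+1)\theta]$ rather than equalling $k\theta$. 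The floor condition in the statement is the worst-case bound $(k+1)\theta\le\pi$, i.e.\ the condition under which the union of the $k+1$ sheets containing the two lifts is a convex sector of angle at most $\pi$ about the infinite-angle point; this is exactly how the paper phrases its ``if'' direction, and it is the form your argument should take (bound the angle by $(k+1)\theta$ rather than computing it as $k\theta$) to obtain the stated criterion uniformly in the endpoints. The converse direction needs the analogous bookkeeping with the lower bound $(k-1)\theta$; the paper is equally terse there, asserting it from the figure. With that adjustment, your two comparisons (a segment of length $<2h$ missing the apex when the sector is convex; a minimising path of length $2h$ through the apex when the developed angle reaches $\pi$) yield the proposition along the same lines as the paper's proof.
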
 

\begin{proof}
	Suppose $x,y$ be the endpoints of an arc $\gamma_k \in \mathcal{H}_k$, not necessarily distinct. Let $\widetilde{\gamma_k}$ be the lift of $\gamma_k$ into $\widetilde{\mathcal{C}^\ast_{h, \theta}}$, such that $\widetilde{\gamma_k}(0) = \widetilde{x} \in \mathcal{S}^\ast_{h, \theta} \times 0$. Then, as $\gamma_k \in \mathcal{H}_k$, $\widetilde{\gamma_k}(1) = \widetilde{y} \in \mathcal{S}^\ast_{h, \theta} \times k$. Now, the shortest path between $\widetilde{x}$ and $\widetilde{y}$ would give us the infimal path below.

    \begin{figure*}[h!]
    \centering
        \labellist
		\Large
		\pinlabel $\widetilde{x}$ at 475 100
		\pinlabel $\theta$ at 300 25
		\pinlabel $\widetilde{y}$ at 40 195
            \pinlabel $\widetilde{x}$ at 1070 215
		\pinlabel $\theta$ at 940 175
		\pinlabel $\widetilde{y}$ at 700 10
		\endlabellist
        \begin{subfigure}[t]{0.5\textwidth}
            \centering
            \includegraphics[height=1in]{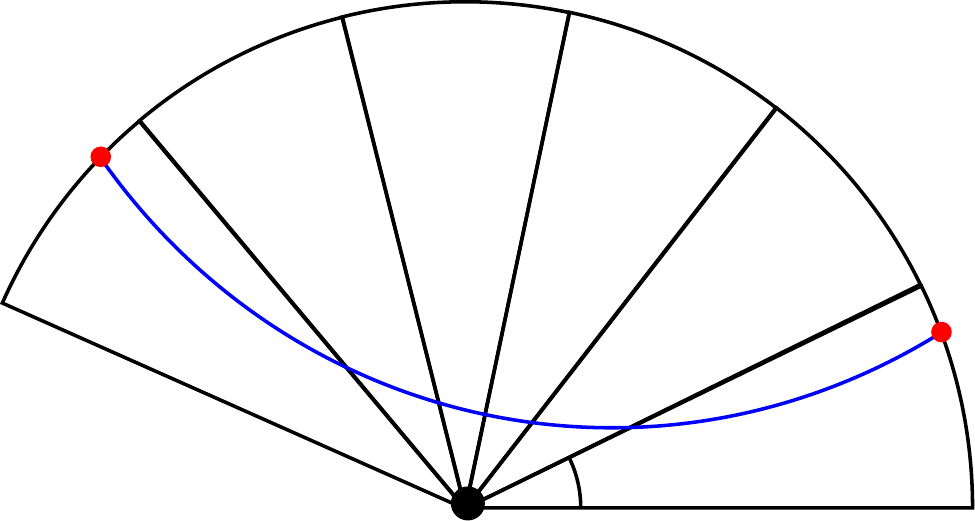}
            \caption{$k < \left \lfloor \dfrac{\pi}{\theta} \right \rfloor $}
            \label{fig:convex}
        \end{subfigure}%
    ~ 
        \begin{subfigure}[t]{0.5\textwidth}
            \centering
            \includegraphics[height=1.4in]{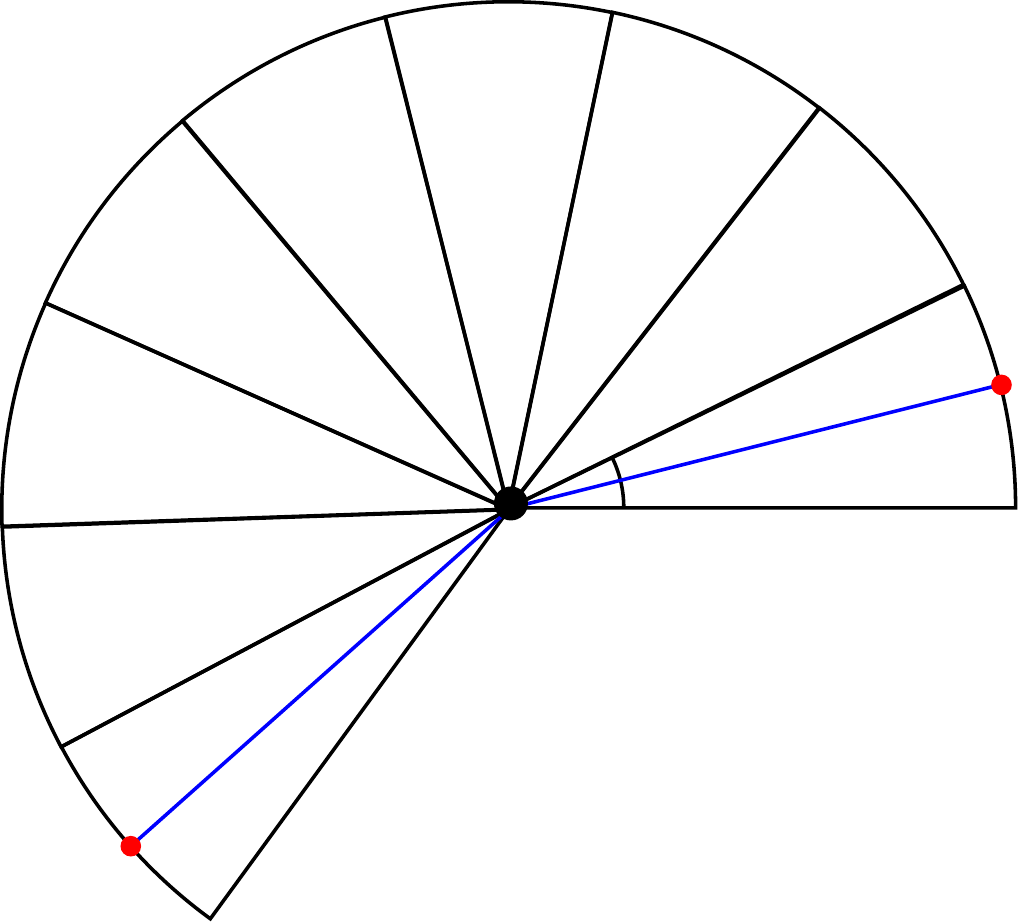}
            \caption{$k \geq \left \lfloor \dfrac{\pi}{\theta} \right \rfloor $}
            \label{fig:non-convex}
        \end{subfigure}
        \caption{Shortest paths joining $\widetilde{x}, \widetilde{y}$}
    \end{figure*}
    

	
	Suppose $k < \left \lfloor \dfrac{\pi}{\theta} \right \rfloor$, then $\widetilde{x}$ and $\widetilde{y}$ lie in a convex subset of $\widetilde{\mathcal{C}^\ast_{h, \theta}}$, thus there is a geodesic joining them away from the cone-point and thus the infimal arc $g_k$ stays away from the cone-point below. See Figure~\ref{fig:convex}.
	
	Now, suppose $k \geq \left \lfloor \dfrac{\pi}{\theta} \right \rfloor$, then the shortest path joining $\widetilde{x}$ and $\widetilde{y}$ is the one passing through the cone-point and thus the infimal arc $g_k$ has to pass through the cone-point below. See Figure~\ref{fig:non-convex}.
\end{proof}

\begin{remark}\label{rem:HomotopyClassPreserved}
	From the proof above, we make the following observations:
	\begin{enumerate}
		\item In the case when $k < \left \lfloor \dfrac{\pi}{\theta} \right \rfloor$, the infimal geodesic actually lies in the homotopy class $\mathcal{H}_k$.
		\item For every $k \geq \left \lfloor \dfrac{\pi}{\theta} \right \rfloor$, the infimal geodesic is exactly the same and the infimal length is $2h$, where $h$ is the slant height of the cone.
	\end{enumerate}
\end{remark}

\begin{lemma}\label{lem:cone_nbhd}
    Let $\mathcal{C}_{h, \theta}$ be a hyperbolic cone where $\theta < \pi$. There exists $d > 0$ and a cone neighbourhood $U_d$ of the cone-point with the following property:
    \begin{center}
        If $x, y$ are two points at distance $d$ from the cone-point, then the geodesic joining $x,y$ does not intersect $U_d$.   
    \end{center}
       
\end{lemma}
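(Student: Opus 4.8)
The plan is to work in the universal cover $\widetilde{\mathcal{C}^\ast_{h,\theta}} \subset \overline{\widetilde{\mathcal{C}^\ast_{h,\theta}}}$, where the cone-point corresponds to the point $\overline{0}$ of infinite cone-angle, and to use the hyperbolic geometry of the sectors $\mathcal{S}^\ast_{h,\theta}\times\{n\}$ that tile it. First I would recall from Proposition~\ref{thm:conearcs} and Remark~\ref{rem:HomotopyClassPreserved} that a geodesic joining two points $\widetilde{x},\widetilde{y}$ in the cover fails to stay away from $\overline{0}$ precisely when $\widetilde x$ and $\widetilde y$ sit in sectors whose indices differ by at least $\lfloor \pi/\theta\rfloor$; equivalently, the ``angular separation'' of the two points, measured as the sum of the sector-angles swept between them, is at least $\pi$. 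So the geometric content of the lemma is: if we only allow points at distance exactly $d$ from $\overline 0$, and $d$ is small enough, then a short arc between two such points cannot wind far enough around $\overline 0$ to be forced through it, and moreover the geodesic between them stays outside the metric ball of radius (say) $d/2$.

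The key steps, in order: (1) Fix the cone neighbourhood to be $U_d = \{\,\text{pts at distance} < d \text{ from the cone-point}\,\}$, i.e.\ the image under $\pi$ of the metric ball $B(\overline 0, d)$ in the completion of the cover; by Definition~\ref{HyperbolicCone} this is just a sub-cone $\mathscr C_{d,\theta}$ once $d \le h$. (2) Take two points $x,y$ at distance $d$ from the cone-point and lift to $\widetilde x, \widetilde y$ at distance $d$ from $\overline 0$ in $\overline{\widetilde{\mathcal{C}^\ast_{h,\theta}}}$, choosing the lift of the relevant geodesic segment of $\mathcal{C}_{h,\theta}$; note a geodesic of $\mathcal{C}_{h,\theta}$ disjoint from the cone-point lifts to a genuine geodesic of $\widetilde{\mathcal{C}^\ast_{h,\theta}}$, and the claim to prove is that this lifted geodesic avoids $B(\overline 0, d)$. (3) Observe that in a hyperbolic triangle with vertices $\overline 0, \widetilde x, \widetilde y$ and two sides of length $d$, the foot of the perpendicular from $\overline 0$ to the opposite side has distance to $\overline 0$ that depends only on $d$ and the angle at $\overline 0$; using the hyperbolic right-triangle formula $\cosh(\text{leg}) = \cosh d / \cosh(\text{half-base})$ or $\sinh(\text{altitude}) = \sinh d \cos(\angle)$, this distance is bounded below by something strictly positive and, crucially, by a quantity that $\to d$ as the angle at $\overline 0$ shrinks. (4) The angle at $\overline 0$ is at most the total cone-angle traversed between $\widetilde x$ and $\widetilde y$; since we may always route through the ``short side'', and since an arc realizing a homotopy class with large intersection number is longer than the diameter bound $2d$ once $d$ is small, we can restrict to configurations where this angle is bounded by a fixed amount. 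Choosing $d$ small enough that $\sinh d \cos(\text{that angle bound}) > \sinh(d/2)$ — possible since $\cos$ of a small angle is near $1$ — forces the geodesic to stay outside $B(\overline 0, d/2)$, hence we may even shrink $U_d$ to radius $d/2$, or simply restate with the constant absorbed.

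I expect the main obstacle to be making precise the relationship between ``two points at distance $d$ from the cone-point'' downstairs and the angle subtended at $\overline 0$ upstairs: a priori two such points can be joined by geodesics that wrap around the cone many times, so one must argue that the length-minimising geodesic (the one actually appearing in applications) subtends a small angle at $\overline 0$, and that winding geodesics are ruled out either because they are not geodesics at all or because their length exceeds $2d$. Concretely I would argue: any geodesic of $\mathcal{C}_{h,\theta}$ between $x$ and $y$ that does not hit the cone-point lifts to a geodesic segment in $\widetilde{\mathcal{C}^\ast_{h,\theta}}$ whose endpoints, being at distance $d < h/2$ from $\overline 0$, lie in consecutive or nearly-consecutive sectors by the convexity discussion preceding Proposition~\ref{thm:conearcs} (a geodesic segment of length $< 2d < h$ simply cannot cross $\lfloor\pi/\theta\rfloor$ slant-lines), so the subtended angle is at most $\lceil \pi/\theta\rceil\,\theta < \pi + \theta$, and in fact for the minimal arc it is much smaller; then the right-triangle estimate closes the argument. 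The only calculation needed is the elementary hyperbolic trigonometry identity for the altitude of an isosceles triangle, which I would invoke rather than derive.
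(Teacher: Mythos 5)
Your skeleton (lift to the cover, isosceles-triangle altitude estimate at $\overline{0}$) can be made to work, but the step that has to carry the proof — bounding the angle subtended at $\overline{0}$ by the lifted geodesic strictly away from $\pi$ — is exactly the step you do not establish. The bound you actually state, $\lceil \pi/\theta\rceil\,\theta < \pi+\theta$, is useless for the altitude estimate (the cosine of half that quantity need not even be positive), and the mechanism you propose for improving it is false: winding geodesics are \emph{not} ruled out by "length exceeds $2d$". By the hyperbolic law of cosines, an apex-avoiding geodesic with both endpoints at distance $d$ from the cone-point subtending angle $\alpha<\pi$ has length $\ell$ with $\cosh\ell=\cosh^2 d-\sinh^2 d\,\cos\alpha<\cosh 2d$, so \emph{every} apex-avoiding geodesic joining $x$ and $y$ is strictly shorter than $2d$, however much it winds; similarly a short segment near the apex can cross many slant lines, so "a segment of length $<2d$ cannot cross $\lfloor\pi/\theta\rfloor$ slant-lines" is unjustified. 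This is not cosmetic: if the homotopy class is chosen so that the subtended angle $\beta+k\theta$ is close to $\pi$, the corresponding geodesic passes arbitrarily close to the cone-point (the altitude tends to $0$), so no neighbourhood depending only on $d$ can work for \emph{all} geodesics joining $x$ and $y$. The lemma therefore needs, and the paper tacitly uses, a restriction to geodesics missing some slant line — equivalently subtending angle at most $\theta<\pi$ at the cone-point — for instance the length-minimising geodesic, whose angular separation is at most $\theta/2$ because (again by the law of cosines) length is monotone in the subtended angle. You gesture at this ("for the minimal arc it is much smaller") but your only offered justification is the incorrect length count, so the argument does not close as written.

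For comparison, the paper's proof is the soft version of your computation: it fixes a slant line $l$ disjoint from $\gamma$, cuts along it so that $\gamma$ lifts into a single sector $\mathcal{S}_{h,\theta}$, and notes that a geodesic with endpoints on the radius-$d$ circle inside the sector cannot cross the chord $\sigma_d$ joining $(d,0)$ and $(d,\theta)$ (the side of $\sigma_d$ away from the cone-point is convex), hence stays at distance at least $r=d_{\mathbb{H}^2}(0,\sigma_d)$; then $U_d$ is the sub-cone of radius $r$, depending only on $d$. Once you cap the angle by $\theta$ in this way, your trigonometric estimate is a perfectly good quantitative substitute for that convexity step (and no smallness of $d$ is needed if you take the radius of $U_d$ to be the altitude bound itself). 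Two smaller points to repair in a final write-up: your step (2) claim that the geodesic avoids $B(\overline{0},d)$ is false — the metric ball about the cone-point is convex, so the geodesic lies in the closed ball of radius $d$ — it is only the smaller ball of radius given by the altitude (your $d/2$) that is avoided; and the right-triangle identity is $\tanh(\text{altitude})=\tanh d\,\cos(\alpha/2)$ rather than the $\sinh$ formula you quote, though this does not affect the qualitative use you make of it.
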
    
 \begin{proof}
    
    Let $x,y \in \mathcal{C}_{h,\theta}$ such that $d\coloneqq d_{\mathbb{H}^2}(0, x) = d_{\mathbb{H}^2}(0, y)$ and $\gamma$ be a geodesic joining $x,y$. Fix a point $z$ in the base of the cone such that the slant line $l$, i.e., the geodesic joining the cone-point $0$ to $z$ does not intersect $\gamma$. Now, observe that $\mathcal{C}_{h,\theta} \setminus l$ is a sector $\mathcal{S}_{h,\theta}$ (see Figure~\ref{fig:neighbourhood1}). 
    
    Let $\sigma_d$ be the geodesic joining the points $(d,0)$ and $(d, \theta)$. We see that the lift of the geodesic arc $\gamma$ joining $x, y$, i.e., $\widetilde{\gamma}$ joining $\widetilde{x}$ and $\widetilde{y}$, does not intersect $\sigma_d$. Define $r \coloneqq d_{\mathbb{H}^2}(0,\sigma_d)$ and $U_d = \{\widetilde{x} \in \mathcal{S}_{h,\theta}~\vert~ d_{\mathbb{H}^2}(0,\widetilde{x}) < r\}$. Note that the neighbourhood $U_d$ is independent of the choices of $x,y,z$ and only depends on $d$ (see Figure~\ref{fig:neighbourhood2}). 
    \begin{figure}[h]
		\labellist
		\pinlabel $x$ at 120 90 
            \pinlabel $y$ at 210 90
            \pinlabel $z$ at 155 65
		\pinlabel $\gamma$ at 155 160
            \pinlabel $\left(d,\theta \right)$ at 410 145
            \pinlabel $(d,0)$ at 670 8
            \pinlabel $\sigma_d$ at 540 90
            \pinlabel $U_d$ at 490 40
            \pinlabel $\widetilde{y}$ at 545 208
            \pinlabel $\widetilde{x}$ at 660 140
		\endlabellist
		\centering
		\centering
		\includegraphics[width=0.7\linewidth]{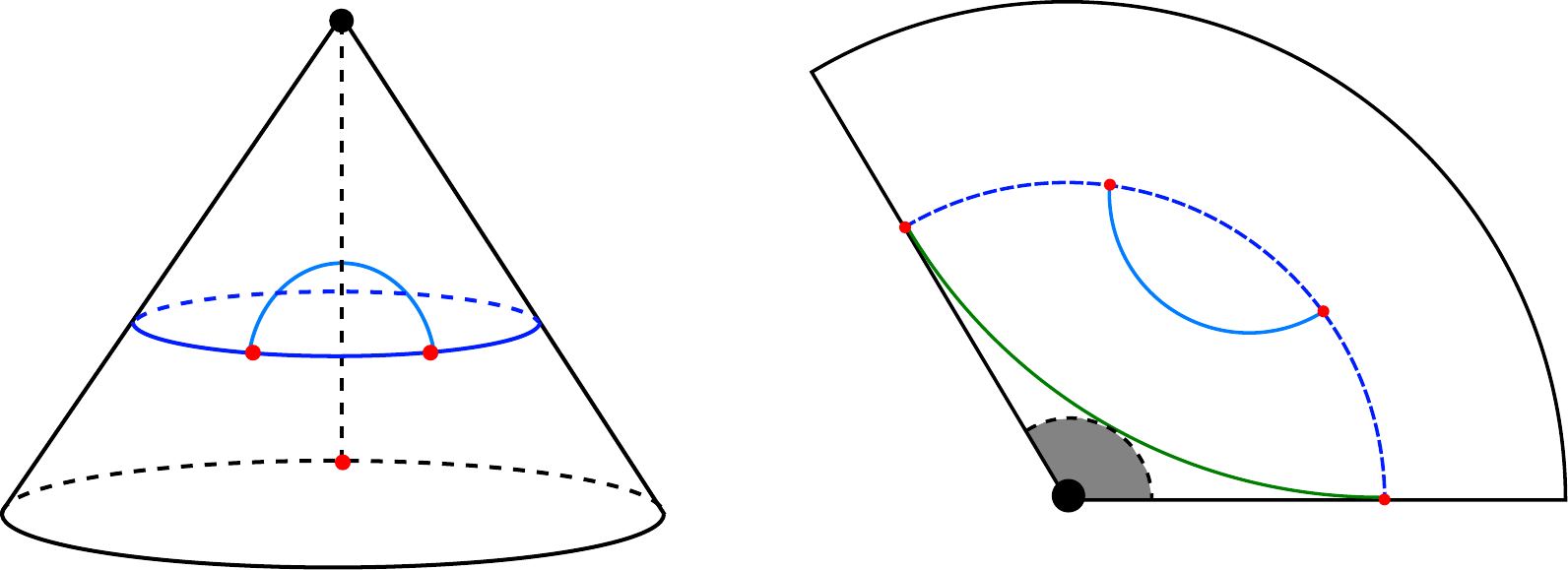}
		\captionof{figure}{The geodesic $\gamma$ in $\mathcal{C}_{h,\theta}$ and $\mathcal{S}_{h,\theta}$. }
		\label{fig:neighbourhood1}
	\end{figure}

     \begin{figure}[h]
		\labellist
		\pinlabel $x$ at 110 80
            \pinlabel $y$ at 185 80
            \pinlabel $z$ at 137 58
            \pinlabel $U_d$ at 190 210
		\endlabellist
		\centering
		\centering
		\includegraphics[width=0.25\linewidth]{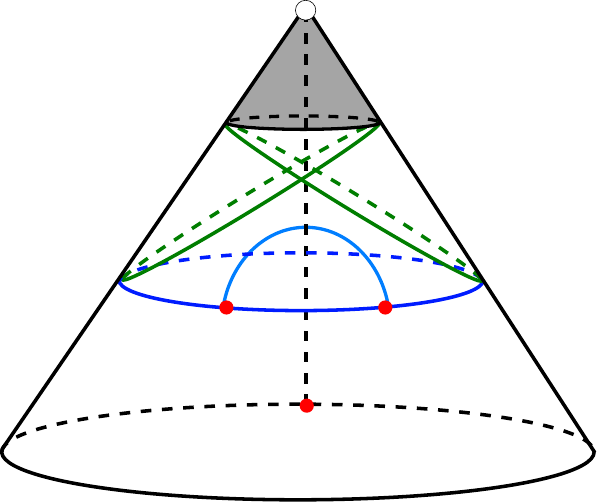}
		\captionof{figure}{The cone neighbourhood $U_d$.}
		\label{fig:neighbourhood2}
	\end{figure}
 \end{proof}   

\begin{definition}
    A hyperbolic cone surface is said to be \textit{admissible} if all the cone-points have cone-angles strictly less than $\pi$.
    
\end{definition}

Now, we will move on to proving some basic facts about hyperbolic cone surfaces. One can find some of these results in \cite{PARLIER} and \cite{SPTAN}. We elaborate on certain proofs and have slightly modified statements for theorems. 
\begin{proposition}\label{prop:selfintersection}
    Any closed curve on a punctured surface that goes around the puncture more than once has a self-intersection. 
\end{proposition}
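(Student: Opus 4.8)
The plan is to reduce the statement to a standard fact about curves on surfaces: a closed curve on a surface $S$ is homotopic to a simple closed curve if and only if it is primitive in $\pi_1$ (or more precisely, a power $\gamma^k$ with $k\ge 2$ can never be freely homotopic to a simple closed curve — this is because a simple closed curve, if nonseparating, has connected complement and cannot bound; and the square of a primitive class has algebraic self-intersection forced to be nonzero). So first I would set up the topological picture: let $p$ be a puncture of $S_{g,n}$, let $\delta$ be a small simple loop encircling $p$, and suppose $c$ is a closed curve freely homotopic to $\delta^m$ with $|m|\ge 2$. I want to show $c$ cannot be realized with zero self-intersections.

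The cleanest route is via the hyperbolic/covering-space argument. Put a complete hyperbolic metric on $S_{g,n}$ so that $p$ corresponds to a cusp; then $\delta$ is represented by a parabolic element $P\in\pi_1$, and $\delta^m$ by $P^m$. If $c$ were simple, then in the universal cover $\mathbb H^2$ the full preimage of $c$ would be a disjoint collection of complete geodesics (or horocycle-like curves — but here $P^m$ is parabolic so $c$ is a loop around the cusp, homotopic into the cusp neighborhood). Work instead directly in a cusp neighborhood $N$ which is an embedded punctured disk. The curve $\delta^m$ can be homotoped into $N$; inside $N\cong$ (punctured disk), the universal cover is a half-plane and the curve $\delta^m$ lifts to an arc connecting a point to its image under $z\mapsto z+m$ (translation by $m$), while the deck group is translation by $1$. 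The projection of the geodesic/minimal representative then necessarily crosses itself: the lift, together with its translates under $z\mapsto z+1, z+2,\dots, z+(m-1)$, must intersect, because the arc spans horizontal distance $m>1$ while consecutive fundamental domains have width $1$. Two distinct translates of the lift that are forced to cross project to a genuine self-intersection point of $c$ downstairs. This is exactly the mechanism already exploited in the proof of Proposition~\ref{thm:conearcs} with the "intersection number $k$" of cone arcs, so I would phrase it in that language: a curve winding $m\ge 2$ times around the puncture has a lift crossing $m-1$ copies of a fundamental slit, hence self-intersects.

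Alternatively — and this might be the slicker writeup — I would argue by the bigon/innermost-disk criterion: suppose for contradiction $c$ is simple and homotopic to $\delta^m$. Cut $S_{g,n}$ along $c$. Since $c$ is homotopic into a punctured-disk neighborhood of $p$, it is null-homotopic in the once-filled-in surface, so it bounds on one side; that side is a subsurface whose only puncture is $p$ and whose boundary is the single curve $c$, i.e. an annulus (one boundary $c$, one "boundary" the puncture $p$). But an embedded annulus between $c$ and the puncture exhibits $c$ as freely homotopic to $\delta^{\pm 1}$, contradicting $|m|\ge 2$ since free homotopy classes of loops around the puncture are indexed by $\mathbb Z$ and $m\ne\pm1$.

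The main obstacle, and the place to be careful, is justifying that a hypothetical simple $c$ in the class $\delta^m$ actually cobounds an \emph{annulus} with the puncture rather than something more complicated: I need to rule out that the side of $c$ containing $p$ has additional topology. This follows because $\pi_1$ of that side injects into $\pi_1(S_{g,n})$ and its image is contained in the cyclic peripheral subgroup $\langle P\rangle$ generated by the loop around $p$ (the image must commute with $P^m$, and in a free group — $\pi_1(S_{g,n})=F_{2g+n-1}$ — the centralizer of $P^m$ is exactly $\langle P\rangle$); a subsurface with cyclic fundamental group and one boundary component is an annulus. So the real content is the centralizer computation in the free group plus the classification of subsurfaces, both of which are standard. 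I would present the covering-space/intersection-number version as the primary proof since it matches the paper's established notation, and remark that it is the "winding arc crosses a fundamental slit" phenomenon from Proposition~\ref{thm:conearcs}.
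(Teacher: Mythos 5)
Your primary route is the same mechanism as the paper's proof (equip the surface with a complete hyperbolic metric with a cusp at the puncture, pass to a cover, and show a lift must cross a translate of itself), but as written it has a genuine gap: you first homotope the curve into the cusp neighbourhood $N$ and then argue about ``the geodesic/minimal representative''. A free homotopy does not preserve embeddedness or self-intersections, so showing that representatives lying inside $N$ must self-intersect does not rule out a hypothetical simple representative that leaves $N$; and there is no geodesic representative to fall back on, since the class $\delta^m$ is parabolic. What must be shown is that the free homotopy class of $\delta^m$, $|m|\ge 2$, contains no embedded representative anywhere in the surface, so the crossing-of-translates claim has to be made for a lift of the actual curve. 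This is precisely the point the paper delegates to \cite[Lemma 5.1]{BirmanSeries}: a lift of the given curve in $\mathbb{H}^2$ meets two adjacent sides of the fundamental polygon with vertex at the cusp, and that lemma (a statement about arbitrary representatives, proved by analysing the full preimage) converts this into a self-intersection downstairs. So either invoke that lemma as the paper does, or carry out the separation argument for the full preimage in the annular cover carefully; the one-line assertion that the arc ``spans horizontal distance $m>1$'' is not yet a proof.

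Your alternative topological route is essentially sound, but the step you single out as the main obstacle is resolved incorrectly: there is no reason that the fundamental group of the side of $c$ containing $p$ should centralize $P^m$ (that side could a priori have genus or further punctures, and its elements need not commute with anything), so the centralizer computation does not deliver the annulus. The correct finish is already in the first half of that paragraph: since $c$ is simple and null-homotopic in the surface with $p$ filled in, it bounds an embedded disk $D$ there; if $p\notin D$ then $c$ is null-homotopic in $S$, contradicting $\delta^m\neq 1$, while if $p\in D$ then $D\setminus\{p\}$ is an embedded annulus, so $c$ is freely homotopic to $\delta^{\pm 1}$, contradicting $|m|\ge 2$ (conjugacy of $\delta^m$ and $\delta^{\pm 1}$ in the free group forces $m=\pm 1$). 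With that replacement your second route is a correct, purely topological alternative to the paper's hyperbolic/Birman--Series argument. Also drop the opening claim that a closed curve is homotopic to a simple one iff it is primitive in $\pi_1$: this is false in general (e.g.\ separating simple closed curves are not primitive), though nothing in your argument actually uses it.
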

\begin{proof}
    Equip the punctured surface with a complete finite-area hyperbolic surface, i.e., a hyperbolic surface with finitely many cusps corresponding to punctures. Let $\gamma$ be the closed curve that goes around the puncture more than once. This implies that in the universal cover a lift of $\gamma$ intersects two adjacent sides in the fundamental polygon with vertex at infinity.  Therefore, by \cite[Lemma 5.1]{BirmanSeries}, $\gamma$ has a self-intersection. 
\end{proof}
\begin{definition}
    A curve on $S$ is called \textit{cone-point peripheral} if it bounds a disk with cone-points and if it bounds only one cone-point we call it just \textit{peripheral}. 
\end{definition}

\begin{proposition} \label{prop:ShortCurves}
    There exists $M > 0$ such that any closed curve in $S \setminus P$, where $P$ is the set of cone-points, with length less than $M$ is either trivial or peripheral. 
\end{proposition}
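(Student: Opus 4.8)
The plan is to produce a uniform lower bound $M$ on the length of any essential, non-peripheral closed curve in $S \setminus P$ by a compactness argument on the "thick" part of the surface, combined with the collar-type estimate of Lemma~\ref{lem:cone_nbhd} near the cone-points. First I would fix, for each cone-point $p_i$ with cone-angle $\theta_i < \pi$ (admissibility is not needed here, but $\theta_i < 2\pi$ suffices), a small embedded cone neighbourhood $U_i$; around a cusp-type puncture one uses a standard horoball neighbourhood. Let $S_0 = S \setminus \bigcup_i \mathrm{int}(U_i)$ be the complement, which is a compact surface with boundary, hence has finite diameter and a positive lower bound $\epsilon_0$ on the injectivity radius (the hyperbolic metric extends smoothly over $S_0$ since we have removed neighbourhoods of all the metric singularities).

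The key dichotomy is then: a short closed geodesic $\gamma$ (of length less than some $M$ to be chosen) either stays inside $S_0$, or it enters some $U_i$. In the first case, if $M < 2\epsilon_0$ then $\gamma$ lifts to an arc in the universal cover of length $< 2\epsilon_0$ bounded by a translate differing by a nontrivial deck transformation, which is impossible by the definition of injectivity radius; so $\gamma$ would have to be trivial, contradiction. In the second case I would use Lemma~\ref{lem:cone_nbhd}: pick $d>0$ and the associated inner cone neighbourhood $U_{i,d} \subset U_i$ so that any geodesic between two points at distance $d$ from the cone-point misses $U_{i,d}$. This forces that a geodesic which penetrates $U_{i,d}$ cannot exit and re-enter the annular region $\{d_{\mathbb{H}^2}(p_i,\cdot) < d\}$; hence the portion of $\gamma$ meeting a neighbourhood of $p_i$ must, after closing up, wind around $p_i$, so $\gamma$ is freely homotopic to a curve contained in a punctured-disk neighbourhood of $p_i$, i.e. $\gamma$ is cone-point peripheral enclosing the single cone-point $p_i$ — that is, peripheral. (If $\gamma$ wound more than once around $p_i$ it would self-intersect by Proposition~\ref{prop:selfintersection}, but that is only relevant to sharpen the conclusion; a simple short curve around $p_i$ is genuinely peripheral.) Taking $M = \min(2\epsilon_0, 2r_1, \dots, 2r_k)$ where $r_i$ is the radius from Lemma~\ref{lem:cone_nbhd} for cone-point $p_i$ finishes the argument.

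The main obstacle I anticipate is making the "enters $U_i$ forces peripheral" step precise: one must rule out the possibility that $\gamma$ dips into several different cone neighbourhoods, or into one neighbourhood and then travels a long way through $S_0$ before returning, all while keeping total length below $M$. This is handled by choosing $M$ small relative to the pairwise distances between the (disjoint, closed) inner neighbourhoods $U_{i,d}$ and relative to $\mathrm{diam}(S_0)$-type data: if $\gamma$ has length $< M$ and meets $U_{i,d}$, it cannot also reach $U_{j,d}$ for $j \neq i$, and it cannot traverse the collar $\{r \le d_{\mathbb{H}^2}(p_i,\cdot) < d\}$ "across" (Lemma~\ref{lem:cone_nbhd} says precisely the shortest crossing has length $> $ something definite), so $\gamma$ is trapped in the punctured disk around $p_i$. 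The only subtlety is that near a cone-point of angle $\theta_i$ the "collar" is genuinely different from the cusp/closed-geodesic collars, which is exactly why Lemma~\ref{lem:cone_nbhd} was proved; with that lemma in hand the compactness argument on $S_0$ is routine.
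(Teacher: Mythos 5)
There is a genuine gap: your argument is stated for closed \emph{geodesics}, but the proposition concerns arbitrary closed curves of length less than $M$. At this stage of the paper no geodesic representative is available: the existence of geodesic representatives (Theorem~\ref{thm:scgeodesic_existence}) is proved afterwards, and its proof invokes Proposition~\ref{prop:ShortCurves} precisely to bound from below the length of \emph{every} curve in a free homotopy class, so reducing to a geodesic representative here would be circular; moreover homotopically trivial curves (which the statement must allow) have no geodesic representative at all, which is why your Case~1 ends in a ``contradiction'' rather than in the admissible conclusion ``trivial''. The geodesic hypothesis is also what lets you quote Lemma~\ref{lem:cone_nbhd} (a statement about geodesic arcs joining boundary points) for the ``trapping'' step; for a general short curve that lemma gives nothing, and the trapping must instead come from a crude length estimate (a curve meeting the inner neighbourhood whose length is less than twice the collar width cannot leave the outer neighbourhood), which you gesture at but which does not require geodesics. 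There is also a mismatch in your dichotomy: the injectivity-radius constant $\epsilon_0$ is computed on $S_0$, the complement of the \emph{outer} neighbourhoods $U_i$, while the trapping applies only to curves meeting the \emph{inner} neighbourhoods $U_{i,d}$, so a short curve contained in the collar $U_i\setminus U_{i,d}$ falls into neither case. (Invoking Lemma~\ref{lem:cone_nbhd} is also inconsistent with your remark that $\theta_i<2\pi$ suffices, since that lemma requires $\theta<\pi$.)

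For comparison, the paper's proof avoids all of this and works directly with arbitrary curves: delete disjoint cone neighbourhoods to obtain a compact set $K$ on which the injectivity radius has a positive infimum $M/2$; a closed curve of length $<M$ either misses $K$, hence by connectedness lies in a single cone neighbourhood and is trivial or peripheral, or it passes through a point $x\in K$ and is then contained in the embedded disk $B_{M/2}(x)$, hence trivial. Your skeleton can be repaired along these lines (compute the injectivity-radius bound on the complement of the inner neighbourhoods, drop the geodesic assumption, and replace Lemma~\ref{lem:cone_nbhd} by the length estimate above), but as written the proof does not establish the statement.
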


\begin{proof}
    As there are only finitely many cone-points, we can choose a $\delta > 0$ such that cone neighbourhoods of radius $\delta$ of distinct cone-points do not intersect. (In \cite{PARLIER}, the authors prove the stronger statement that if cone-angles are strictly less than $\pi$, then we can find cone neighbourhoods with slant heights depending only on cone-angles. In other words, distinct cone-points cannot be arbitrarily close if cone-angles are strictly less than $\pi$.)
    
    Now, consider the compact subset $K$ of $S$ obtained after removing disjoint cone neighbourhoods around cone-points. The injectivity radius is a continuous function on $S$, therefore, it will have an infimum on $K$, say $\frac{M}{2}$. Let $\gamma$ be closed curve in $S \setminus P$ of length less than $M$. Suppose $\gamma \cap K = \emptyset$, then it lies completely inside a cone neighbourhood, by connectedness of $\gamma$. This implies that it is either trivial or homotopic to a cone-point. Suppose $\gamma \cap K \neq \emptyset$, then there exists an embedded disk $B_{M/2}(x)$ around an $x \in \gamma \cap K$ such that $\gamma \subset B_{M/2}(x)$. This implies that $\gamma$ is trivial.
\end{proof}
\begin{theorem} \label{thm:scgeodesic_existence}
Let $S$ be an admissible hyperbolic cone surface. Let $\gamma$ be an essential simple closed curve in $S \setminus P$ i.e., non-peripheral and non-trivial simple closed curve, where $P$ is the set of cone-points. Then, there exists a unique simple closed geodesic representative $c$ in the free homotopy class of $\gamma$ in $S \setminus P$. 
\end{theorem}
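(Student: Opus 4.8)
The plan is to produce a representative by minimising length within the free homotopy class, using the cone geometry of Section~\ref{sec:HyperbolicConeSurfaces} to keep a minimising sequence away from the cone points, and then to get simplicity and uniqueness by lifting to the universal cover of $S\setminus P$ and its metric completion.

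\emph{Existence.} Put $L=\inf\{\ell(c): c\text{ a loop in }S\setminus P\text{ freely homotopic to }\gamma\}$, where $\ell$ is length in the incomplete hyperbolic metric. Then $L\ge M>0$ for $M$ as in Proposition~\ref{prop:ShortCurves}, since a loop in the class of length $<M$ would be trivial or peripheral. Fix a small $\delta>0$ so that the closed cone neighbourhoods $N_\delta(p)$, $p\in P$, are pairwise disjoint --- by admissibility distinct cone points stay a definite distance apart --- and take a minimising sequence $(c_n)$ with $\ell(c_n)\le L+1$, parametrised proportionally to arc length. The key step is to confine $(c_n)$ to a fixed compact subset of $S\setminus P$: whenever $c_n$ enters some $N_\delta(p)$ one tightens the corresponding sub-arc, replacing it by the length-minimising arc in its relative homotopy class. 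By Proposition~\ref{thm:conearcs} and Remark~\ref{rem:HomotopyClassPreserved} this does not increase length and preserves the free homotopy class, and --- here admissibility is used decisively --- provided the winding of the sub-arc is below the threshold $\lfloor\pi/\theta\rfloor$, Lemma~\ref{lem:cone_nbhd} keeps the tightened arc at distance $\ge r=r(\delta)>0$ from $p$. Granting that a minimising sequence may be chosen with all such excursions below this threshold (the delicate point, discussed below), the tightened curves lie in the compact set $K$ obtained by deleting from $S$ the open $r$-neighbourhoods of the cone points; being uniformly Lipschitz they converge, after passing to a subsequence, by the Arzela--Ascoli theorem to a loop $c\subset K$ with $\ell(c)=L$. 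As $c$ lies at distance $\ge r$ from $P$, uniform convergence shows it is freely homotopic to $\gamma$ in $S\setminus P$; being the limit of a length-minimising sequence it is locally length-minimising, hence a closed geodesic; and it cannot pass through a cone point, because at a cone point of angle $\theta<\pi$ an arc cuts off two angular sectors of total angle $<\pi$, so it can be shortened by cutting across one of them.

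\emph{Simplicity and uniqueness.} Pass to the universal cover $\widetilde{S\setminus P}$ with the pulled-back metric and to its metric completion $\overline{\widetilde{S\setminus P}}$, which near each added point is isometric to the infinite-angle model $\overline{\widetilde{\mathcal{C}^\ast_{h,\theta}}}$ of Section~\ref{sec:HyperbolicConeSurfaces}. Thus $\overline{\widetilde{S\setminus P}}$ is complete, simply connected and locally $\mathrm{CAT}(-1)$ --- hyperbolic off the completion points, and of cone angle $\ge 2\pi$ there --- hence globally $\mathrm{CAT}(-1)$, and $\pi_1(S\setminus P)$ acts on it by isometries. Let $g\in\pi_1(S\setminus P)$ represent $\gamma$. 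The geodesic $c$ above lifts to a $\pi_1$-orbit of geodesics of $\overline{\widetilde{S\setminus P}}$, one component $A$ of which is invariant under $g$; in particular $g$ is a hyperbolic isometry with axis $A$. Since $\gamma$ is simple, distinct $\pi_1$-translates of a lift of $\gamma$ do not link at infinity; this linking depends only on the associated pairs of fixed points, so it fails for the translates of $A$ as well, whence $A$ projects injectively and $c$ is simple. Finally, if $c'$ is any simple closed geodesic of $S\setminus P$ freely homotopic to $\gamma$, then, avoiding the cone points, $c'$ also lifts to a geodesic of $\overline{\widetilde{S\setminus P}}$ invariant under a conjugate of $g$; by uniqueness of the axis of a hyperbolic isometry of a $\mathrm{CAT}(-1)$ space this lift must be $A$, so $c'=c$.

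\emph{Main obstacle.} I expect the crux to be the attainment of the infimum in the existence step --- equivalently, showing a minimising sequence does not escape towards the cone points with unbounded winding around them. This is precisely where admissibility ($\theta<\pi$) is essential: it is what makes the tightening of cone excursions possible (Proposition~\ref{thm:conearcs}, Lemma~\ref{lem:cone_nbhd}) and what forbids geodesics through cone points, both of which fail once a cone angle reaches $\pi$, and with them the theorem itself.
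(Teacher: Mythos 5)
Your existence step contains the one genuine gap: you explicitly ``grant'' that a minimising sequence can be arranged so that every excursion into a cone neighbourhood winds below the threshold of Proposition~\ref{thm:conearcs}, and your closing paragraph only repeats that admissibility should make this work --- it never supplies the argument. The missing ingredient is exactly the one the paper uses: by Proposition~\ref{prop:selfintersection}, a closed curve that goes around a puncture more than once has a self-intersection, so because the free homotopy class is that of a \emph{simple} closed curve, no sub-arc of any representative can wind around a cone point more than once (equivalently, the cyclically reduced word has no square of the peripheral generator). Combined with $\theta<\pi$ this puts every excursion below the threshold, so the tightening you describe is length-non-increasing and class-preserving (Remark~\ref{rem:HomotopyClassPreserved}) and, via Lemma~\ref{lem:cone_nbhd}, keeps the curves a definite distance from $P$. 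Note that you need this winding bound for the arbitrary loops $c_n$ of the minimising sequence, which are not embedded, so the simplicity of $\gamma$ must first be converted into a statement about its conjugacy class; without that conversion the step you grant could fail, and with a cone angle $\geq\pi$ it does fail, as does the theorem. (The paper's bookkeeping differs slightly --- it applies Arzel\`a--Ascoli in the compact completion $S$ and only afterwards shows the limit avoids the cone points --- but the crux is the same, and it is the part you left open.)

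Your simplicity and uniqueness argument is a genuinely different route from the paper's: the paper lifts to the universal cover of $S\setminus P$, uses an auxiliary \emph{complete} hyperbolic metric to see that distinct lifts have distinct nested endpoints, rules out crossings by a bigon argument pushed through the developing map to $\mathbb{H}^2$, and gets uniqueness from bigons plus Gauss--Bonnet; you instead pass to the metric completion of the universal cover, argue it is $\mathrm{CAT}(-1)$ (infinite-angle points, Cartan--Hadamard without local compactness), and use axes. Your uniqueness via uniqueness of axes is clean and arguably slicker, granted the $\mathrm{CAT}(-1)$ claim. The simplicity step, however, is under-justified: the completion is not proper and its visual boundary is not obviously a circle, so ``linking at infinity'' needs a definition; lifts of the merely topological curve $\gamma$ must be shown to have well-defined ideal endpoints equal to those of the corresponding axes (this follows from lifting the free homotopy, whose image is a compact subset of $S\setminus P$, so lifts stay at bounded Hausdorff distance --- but you do not say this); and you must show unlinked endpoint pairs force non-crossing axes in this setting. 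Either supply these boundary arguments or replace them with the paper's comparison-plus-bigon argument, which avoids the boundary of the non-proper space altogether.
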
 
\begin{proof}
	Let $\mathscr{H}$ denote the free homotopy class of $\gamma$ in $S \setminus P$. As $\gamma$ is an essential simple closed curve, $l(\sigma) > M > 0$ for all $\sigma \in \mathscr{H}$, where $M$ is the value from Proposition~\ref{prop:ShortCurves}. Hence, there exists $l > M$ such that $\inf_{\sigma \in \mathscr{H}} l(\sigma) = l $. 
	
	Now, let $\{\sigma_n\}_n$ be a sequence of curves such that $\{l(\sigma_n)\}$ is a decreasing sequence converging to $l$. Reparametrise these curves to be defined on the same interval with speed proportional to arc-length. We have an equicontinuous family of functions into the compact metric space $S$, thus, by Arzela-Ascoli, there exists a $c$ such that $\sigma_n \rightarrow c$ uniformly.
    
    Suppose $c$ passes through a cone-point $p$. Let $\mathcal{C}_p$ be a cone neighbourhood around the cone-point $p$ and $\alpha_1, \ldots, \alpha_l$ be the geodesic arcs $c \cap \mathcal{C}_p$. If $\alpha_k$ passes through the cone-point $p$, we can decrease the length of $\alpha_k$ by homotoping it away from $p$. This is because the cone-angles are less than $\pi$. As $\mathcal{H}$ is a homotopy class of a simple closed curve, it follows from Proposition~\ref{prop:selfintersection} that any of the arcs $\alpha_i$ cannot go around the puncture more than once. Thus, homotoping away from $p$, will preserve the homotopy class $\mathcal{H}$ (see Remark~\ref{rem:HomotopyClassPreserved}). Hence, $c \in \mathcal{H}$ and $c$ cannot pass through the cone-points. Also, as $c$ is locally length minimising, it is a geodesic.

	
	Now, we will prove that $c$ is simple. For brevity, denote $S \setminus P$ by $\mathcal{S}$. Let $\widetilde{\mathcal{S}}$ be the universal cover of $\mathcal{S}$. Note that $\widetilde{\mathcal{S}}$ is homeomorphic to the Riemannian universal cover of $X$, i.e., $S \setminus P$ with a fixed complete hyperbolic metric. Now, as $S \setminus P$ has a hyperbolic structure, we get a developing map $\text{dev}:\widetilde{\mathcal{S}} \rightarrow \mathbb{H}^2$. The map $\text{dev}$ is a local isometry. 
	
	\begin{figure}[h]\labellist
            \pinlabel $\widetilde{S}$ at 20 650
            \pinlabel $X$ at 800 600
            \pinlabel $\mathcal{S}$ at 800 300
		\pinlabel $\mathbb{H}^2$ at 20 160
		\endlabellist
		\centering
		\centering
		\includegraphics[width=8cm]{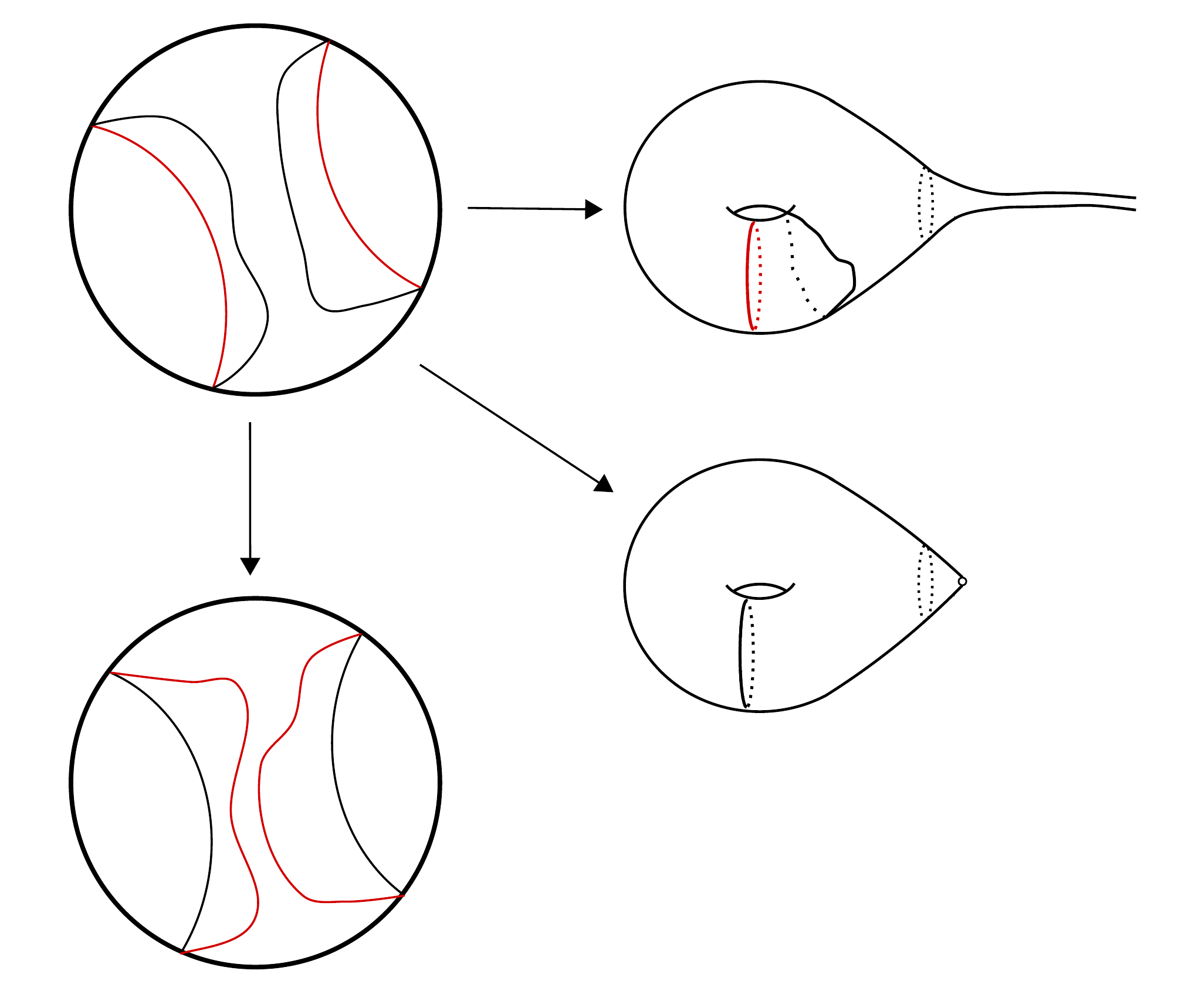}
		\caption{Lifts into $\widetilde{S}$ and $\mathbb{H}^2$, where $\widetilde{S}$ is viewed as the same topological space $\mathbb{H}^2$}
		\label{fig:simplicity}
	\end{figure}
	  
	Let $\widetilde{c}$ be a lift of $c$ into $\widetilde{\mathcal{S}}$. Suppose $\widetilde{c}$ has a transverse self-intersection, then there exists $\widetilde{x} \in \widetilde{\mathcal{S}}$ such that two arcs of $\widetilde{c}$ passes through $\widetilde{x}$ with distinct tangents. Observe that $\text{dev}(\widetilde{c})$ is a hyperbolic geodesic, since $\widetilde{c}$ is a geodesic and $\text{dev}$ is a local isometry. This gives a contradiction, since at $\text{dev}(\widetilde{x})$, the geodesic $\text{dev}(\widetilde{c})$ cannot have distinct tangents. Hence, $\widetilde{c}$ is simple.
	
	 Let $g$ be the unique simple closed geodesic in the homotopy class of $\mathscr{H}$ in complete hyperbolic surface $X$. As $g$ is simple, two distinct lifts of $g$ do not intersect in $\widetilde{\mathcal{S}}$ and they have distinct endpoints that are nested. Also, we know that the lifts $\widetilde{g}$ and $\widetilde{c}$ obtained by lifting the homotopy between $g$ and $c$ have the same endpoints at infinity. This implies that, distinct lifts of $c$ have distinct endpoints that are nested.
	
	As the endpoints are nested, two distinct lifts of $c$, if they intersect transversally must form a bigon, which implies that their developing images must form a bigon, which is impossible since they are hyperbolic geodesics. Hence, no two lifts of $c$ intersect.
    
    Also, by \cite[Proposition 1.4]{FB}, $c$ is not a power of a closed curve, since it represents the conjugacy class of a simple closed curve. In summary, we see that $c$ is not a power of a closed curve, every lift of $c$ is simple and no two distinct lifts intersect. Hence, $c$ is simple. 
	
    Now, we prove the uniqueness. Suppose $c, d$ are two simple closed geodesic representatives of $\gamma$. Now, if $c, d$ intersect, then they form a bigon, which implies that the developing images of their lifts form a bigon. This is impossible because hyperbolic geodesics cannot form a bigon. Therefore, $c \cap d = \emptyset$. This implies that there is an embedded hyperbolic cylinder with $c$ and $d$ as its boundary circles. This is again a contradiction, since by Gauss-Bonnet there is no hyperbolic cylinder with two geodesic boundaries.
\end{proof}
\begin{remark}{\ }
    \begin{enumerate}
        \item The condition that the cone-angles are less than $\pi$ is necessary because homotoping across the cone-point might not preserve the homotopy class of the curve (see Remark~\ref{rem:HomotopyClassPreserved}).
        \item In the case of two cone-points of cone-angle $\pi$, the geodesic representative of the simple closed curve bounding these two cone-points degenerate to the geodesic arc between these cone-points.
    \end{enumerate}    
\end{remark}
\begin{remark}
    We have actually proved something slightly stronger than the statement above. Let $\theta_1, \ldots ,\theta_n$ be the cone-angles on the surface and $k_i = \left \lfloor \frac{\pi}{\theta_i} \right \rfloor$. Then, the geodesic corresponding to any curve such that arcs lying in the cone neighbourhood of $p_i$ do not wrap around the cone-point more than $k_i$ times cannot pass through $p_i$.
\end{remark}

Now, we will prove an important corollary of the above theorem which will be used in the next section. Recall that the fundamental group of $S_{g,n}$ is given as follows:
\[\pi_1(S_{g,n}) = \left \langle a_1, b_1, \ldots, a_g,b_g, c_1, \ldots, c_n \Big \vert \prod_{i=1}^g [a_i,b_i] \prod_{j=1}^n c_j \right \rangle\]

\begin{proposition}\label{prop:admissiblePolygon}
Any admissible hyperbolic cone surface $S$ can be obtained from a convex hyperbolic polygon $\mathcal{P}$ by identifying sides via isometries. 
\end{proposition}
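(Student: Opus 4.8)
The plan is to construct the polygon $\mathcal{P}$ directly, by cutting the surface $S$ open along a suitable collection of simple arcs and simple closed geodesics so that the result is a hyperbolic disk, and then arguing that this disk can be realized as a \emph{convex} polygon in $\mathbb{H}^2$. First I would remove the cone-points and work on $S\setminus P$, which carries an incomplete hyperbolic structure whose metric completion is $S$. Choose a standard system of generators $a_1,b_1,\dots,a_g,b_g$ for the non-peripheral part of $\pi_1$ together with loops $c_1,\dots,c_n$ around the cone-points, realized by simple closed curves (or arcs joining a fixed basepoint) in general position meeting only at the basepoint; by Theorem~\ref{thm:scgeodesic_existence} each essential simple closed curve in this system has a unique simple closed geodesic representative avoiding the cone-points, and one can similarly replace the peripheral data and the connecting arcs by geodesic segments. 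Cutting $S$ along this geodesic $1$-complex yields a simply-connected hyperbolic surface with geodesic boundary, i.e.\ an (a priori only geodesic) polygon $\mathcal{P}$ whose side-pairing isometries are exactly the generators; developing $\mathcal{P}$ into $\mathbb{H}^2$ via the developing map gives an immersed geodesic polygon, and the identification pattern recovers $S$. So far this is the classical fundamental-polygon picture, adapted to the cone setting using the results already established in this section.

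The real content — and the main obstacle — is upgrading "geodesic polygon" to "\emph{convex} polygon", i.e.\ arranging that the developed polygon $\mathcal{P}\subset\mathbb{H}^2$ is embedded and has all interior angles at most $\pi$. The interior vertices of $\mathcal{P}$ are the preimages of the basepoint and of the cone-points; at a cone-point $p_i$ the total angle around it in $S$ is $\theta_i<\pi$ by admissibility, and this total angle is distributed among the corners of $\mathcal{P}$ that map to $p_i$, so \emph{each such corner has angle $<\theta_i<\pi$} — this is precisely where admissibility is used and it is the key point. The corners coming from the basepoint sum to $2\pi$ around an ordinary point, so I would choose the arc system through the basepoint carefully (for instance, so that the basepoint contributes a single vertex, or so that its angle contributions are each $<\pi$ — one can always subdivide or re-route the cutting arcs locally near the basepoint to split a reflex angle into two convex ones, at the cost of introducing an extra "fake" vertex lying on a geodesic edge, which does not affect convexity). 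Handling the $2\pi$ at the basepoint so that no single corner is reflex is the fiddly part; the cleanest route is to take the basepoint \emph{on} a cone-point neighbourhood boundary, or to use the standard trick of cutting along arcs from a cone-point rather than an interior point, so that every interior vertex of $\mathcal{P}$ is a cone-point and automatically has angle $<\pi$.

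Once every interior angle is $<\pi$, convexity and embeddedness of the developed polygon follow: a geodesic polygon in $\mathbb{H}^2$ all of whose turning angles are in the convex range bounds a convex region (this is the hyperbolic analogue of the fact that a locally convex closed curve bounds a convex set, using that $\mathbb{H}^2$ is Hadamard and nonpositively curved, together with Gauss–Bonnet to rule out the polygon "spiraling"). Then $\mathcal{P}$ is a convex hyperbolic polygon and the side-pairing isometries realize $S$, as required. I would also double-check the boundary case $n\ge 1$ versus the closed case: here $S_{g,n}$ has cone-points (or punctures), so after removing them the side-pairing transformations generate a \emph{free} group and no single long relation needs to be imposed on the boundary word, which is exactly what makes the polygon construction go through cleanly; I expect the statement and proof to be phrased for surfaces with at least one cone-point or puncture. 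Finally, I would remark that uniqueness of the geodesic representatives (Theorem~\ref{thm:scgeodesic_existence}) guarantees the construction is canonical once the topological arc system is fixed, and that the same argument, without the convexity bookkeeping, recovers the classical fact for $\theta_i = 2\pi$ (ordinary hyperbolic surfaces) — the admissibility hypothesis is doing no work beyond ensuring the cone vertices are convex.
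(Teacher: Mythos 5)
Your overall strategy---cut along a system of loops through a basepoint together with arcs to the cone-points, and use admissibility to make the cone vertices convex---is the same as the paper's, but there is a genuine gap at the step where you ``realize the standard generating system geodesically by Theorem~\ref{thm:scgeodesic_existence}.'' That theorem only produces the closed geodesic in a \emph{free} homotopy class: the geodesic representatives of $a_1,b_1,\dots,a_g,b_g$ will in general not pass through a common basepoint, are not geodesic loops based at a point, and nothing in your sketch controls their mutual intersections, so you cannot simply cut along them and obtain a disk. This is precisely the technical content of the paper's proof: only $a_1,b_1$ are taken as honest closed geodesics, chosen to intersect exactly once, and the basepoint $x$ is \emph{defined} to be their intersection; one then cuts along $a_1,b_1$ and geodesic arcs $l_1,\dots,l_n$ from $x$ to the cone-points, and builds the remaining generators inductively as \emph{lassos} (geodesic except possibly at $x$): each new pair is viewed in the cut-open surface as a family of simple disjoint arcs with endpoints on the geodesic boundary, and \cite[Lemma~3.6]{DESPRE} guarantees that their geodesic representatives rel boundary remain simple and disjoint, hence meet the previously constructed curves only at $x$. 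Without this (or some substitute), your ``cut along the geodesic $1$-complex'' step does not get off the ground.

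Your concern about the $2\pi$ of angle at the basepoint is legitimate, but it resolves itself in the paper's construction: since $a_1$ and $b_1$ are geodesics crossing transversally at $x$, the four corners they create at $x$ each have angle $<\pi$, and all subsequent cuts (the arcs $l_i$ and the later lassos) only subdivide these corners further; combined with the cone angles being $<\pi$ by admissibility, every interior angle of the resulting polygon is $<\pi$, so no re-routing or fake vertices are needed. Your fallback suggestion of cutting along arcs emanating from cone-points rather than from an interior basepoint is essentially the route the paper takes for punctured spheres (Proposition~\ref{prop:PolygonForConeSpheres}), but in genus $\geq 1$ one still needs the lasso construction above to produce the handle generators.
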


\begin{proof}
    Suppose the admissible hyperbolic surface has genus $g$ and $n$ cone-points. We will cut our surface along a set $\{a_1, b_1, \ldots, a_g,b_g\}$ of simple closed lassos (geodesics except possibly at one point) based at a point $x \in S$ and a set of simple geodesic arcs $\{l_1, \ldots, l_n\}$ starting at $x$ to obtain a convex hyperbolic polygon. We will construct the required lassos and arcs below.
    
    Firstly, there exists two simple closed curves, say $a_1^\prime, b_1^\prime$ such that they intersect exactly once on $S$. Now, by Theorem~\ref{thm:scgeodesic_existence}, $a_1^\prime, b_1^\prime$ can be homotoped to simple closed geodesics $a_1, b_1$ respectively such that they intersect at one point, say $x$. Let $l_1, \ldots, l_n$ be the family of disjoint simple geodesic arcs joining the cone-points to the point $x$. Now, cutting the surface $S$ along the curves $a_1, b_1$ and the arcs $l_1, \ldots, l_n$, we obtain a hyperbolic surface $S^\prime$ with piecewise geodesic boundary and internal cone-angles less than $\pi$ (see Figure~\ref{fig:convex_polygon}).

    We will construct the remaining simple closed lassos inductively. We already have finished constructing $a_1, b_1$, which are in fact simple closed geodesics. Now, suppose $a_1, b_1, \ldots, a_{k-1}, b_{k-1}$ have been constructed on $S$. Let $a_k^\prime, b_k^\prime$ be two simple closed curves based at $x$ on $S$ such that:
    \begin{itemize}
        \item The homotopy classes of $a_k^\prime, b_k^\prime$ is distinct from $a_1, b_1, \ldots, a_{k-1}, b_{k-1}$.
        \item The homotopy classes of $a_k^\prime$ and $b_k^\prime$ has geometric intersection number 1.
        \item They have representatives disjoint from $a_1, b_1, \ldots, a_{k-1}, b_{k-1}$.
    \end{itemize}
    
    Note that even though they have disjoint representatives in their free homotopy classes, the representatives we are interested in must intersect at $x$. Hence, the image of these curves in $S^{\prime}$ is a family of arcs starting and ending at the cone-points on the boundary corresponding to $x$. Now, from \cite[Lemma 3.6]{DESPRE}, we know that the geodesic representatives of simple disjoint arcs with endpoints on the geodesic boundary, remain simple and disjoint. Hence, geodesic representatives $a_k, b_k$ of $a_k^\prime, b_k^\prime$ respectively are such that they intersect $a_1, b_1, \ldots, a_{k-1}, b_{k-1}$ only at $x$. Repeating this construction, we get a family of simple closed lassos $a_1, b_1, \ldots, a_g,b_g$ and geodesic arcs $l_1, \ldots, l_n$ such that cutting along these gives a convex hyperbolic polygon.

     \begin{figure}[h]
		\labellist
            \tiny
		\pinlabel $x$ at 455 620
            \pinlabel $a_1$ at 415 580
            \pinlabel $b_1$ at 400 480
		\endlabellist
		\centering
		\centering  
		\includegraphics[width=0.9\linewidth]{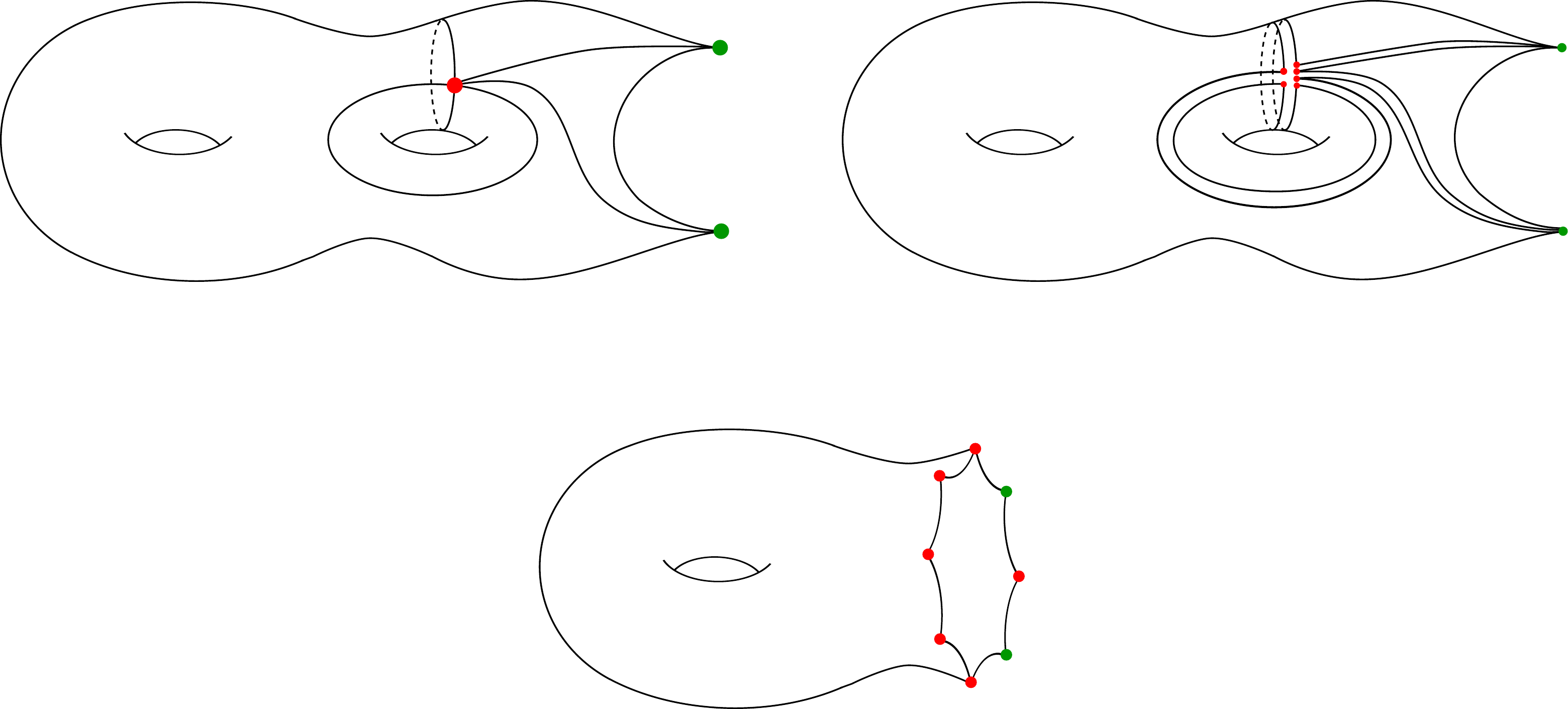}
		\captionof{figure}{Simple geodesic lassos and simple arcs}
		\label{fig:convex_polygon}
	\end{figure}
\end{proof}

\begin{remark}\label{rem:UniversalCover}
    We can construct a Riemannian universal cover $\widetilde{S \setminus P}$ for $S \setminus P$ using the polygon $\mathcal{P}$, obtained above, in the usual way. Let $\{s_1,\ldots,s_{4g+2n}\}$ be the sides of the polygon $\mathcal{P}$ such that there exists ordered pairs of sides $(s_i,s_j)$ such that one of the generators, say $g_{ij}$, of the fundamental group identifies one with the other. Therefore, the universal cover of $S \setminus P$ is given by $\widetilde{S \setminus P} \coloneqq \left(\mathcal{P} \setminus \{p_1,\ldots,p_n\}\right) \times \pi_1(S_{g,n})/ \sim$, such that $(s_i, \gamma) \sim (s_j,\gamma \cdot g_{ij})$ and $p_1, \ldots, p_n$ are the vertices corresponding to the cone-points. We denote this covering map by $\pi: \widetilde{S \setminus P} \rightarrow S \setminus P$. The metric on $\widetilde{S \setminus P}$ is the induced metric. Note that this metric is incomplete as we can have Cauchy sequences that converge to the removed vertices. 
\end{remark}

\section{$\pslr$-Character Variety and Mapping Class group action}\label{sec:CharacterVariety}
Let $S_{g,n}$ denote a surface of genus $g$ with $n$ punctures. The fundamental group of $S_{g,n}$ is given by the following presentation of the fundamental group:
 \[\pi_1(S_{g,n}) \coloneqq \left \langle a_1, b_1, \ldots, a_g, b_g, c_1, \ldots, c_n \Big \vert \prod_{i=1}^g \left[a_i, b_i\right] \prod_{k=1}^n c_k  \right \rangle\]
 
Denote by $\mathcal{A}$, the generating set $\{a_1, b_1, \ldots, a_g, b_g,c_1,\ldots,c_n\}$ of $\pi_1(S_{g,n})$. Also, observe that $\pi_1(S_{g,n})$ is isomorphic to $F_k$, where $F_k$ is a free group on $k$ letters and $k = 2g+n-1$. 

The \textit{$\pslr$-representation variety} of $\pi_1(S_{g,n})$ is defined as the set of all homomorphisms from $\pi_1(S_{g,n})$ to $\pslr$ and denoted by $\text{Hom}(\pi_1(S_{g,n},\pslr) $. Note that  $\text{Hom}(\pi_1(S_{g,n},\pslr)$ is isomorphic to $\left(\pslr\right)^{2g+n-1}$, since a homomorphism is uniquely determined by the images of the generators of $\pi_1(S_{g,n})$.

There exists a natural action of $\pslr$ on $\text{Hom}(\pi_1(S_{g,n}), \pslr)$ which is just the conjugation action i.e., 
\[A \cdot \rho = A \rho A^{-1}, \forall A \in \pslr \]
Also, note that there is an action of $\text{Aut}(\pi_1(S_{g,n}))$ on $\text{Hom}(\pi_1(S_{g,n}), \pslr)$ given by
\[\phi \cdot \rho = \rho \circ \phi^{-1}, \forall \phi \in \text{Aut}(\pi_1(S_{g,n}))\]

\begin{definition}
    A representation $\rho: \pi_1(S_{g,n}) \rightarrow \pslr$ is said to be \textit{non-elementary} if the subgroup $\rho(\pi_1(S_{g,n}))$ does not fix any finite set in $\overline{\mathbb{H}^2}$. 
\end{definition}

We now, restrict to the $\pslr$-invariant subspace, $\mathrm{Hom}^\text{ne}(\pi_1({S_{g,n}}), \pslr)$, of non-elementary representations. We define the \textit{$\pslr$-character variety} of $\pi_1({S_{g,n}})$ as the orbit space of $\text{Hom}^\text{ne}(\pi_1({S_{g,n}}), \pslr)$ under the conjugation action of $\pslr$ i.e.,
\[\mathcal{X}(S_{g,n}) \coloneqq \text{Hom}^{\text{ne}}(\pi_1(S_{g,n}), \pslr) \big / \pslr\]

Note that the action of $\text{Aut}(\pi_1(S_{g,n}))$ gives rise to an action of $\text{Out}(\pi_1(S_{g,n})) \coloneqq \text{Aut}(\pi_1(S_{g,n}))/ \text{Inn}(\pi_1(S_{g,n}))$ on $\mathcal{X}(S_{g,n})$. This is because, if $\phi \in \text{Inn}((\pi_1(S_{g,n}))$, then $\phi(\gamma) = \sigma \gamma \sigma^{-1}, \forall \gamma \in \pi_1(S_{g,n})$ and 
\[\phi \cdot \rho (\gamma) = \rho \circ \phi^{-1}(\gamma) = \rho(\sigma)^{-1} \rho(\gamma) \rho(\sigma)\]
This implies that $\rho \sim \phi \cdot \rho, \forall \phi \in \text{Inn}(\pi_1(S_{g,n}))$.

For every $\gamma \in \pi_1(S_{g,n})$, we have a character map $\tau_{\gamma}: \text{Hom}(\pi_1(S_{g,n}),\pslr) \rightarrow \mathbb{R}$ defined as follows:
\[\tau_\gamma (\rho) \coloneqq \tra^2(\rho(\gamma)).\]
The ring of all character maps $T$ is finitely generated \cite[Theorem~1.4.1]{CULLER-SHALEN}. Let $\tau_{\gamma_1}, \ldots, \tau_{\gamma_k}$ be thus obtained set of generators.  We can define a vector-valued function $t: \text{Hom}(\pi_1(S_{g,n}), \pslr) \rightarrow \mathbb{R}^k$ as $t(\rho) \coloneqq (\tau_{\gamma_1}(\rho), \ldots, \tau_{\gamma_k}(\rho))$. Note that this map factors through orbit space under conjugation as traces are conjugation invariant. The character variety can also be interpreted as a subspace of the image of non-elementary representations under $t$ (see \cite{DanielMathews1}). For various other definitions of character varieties, see \cite{Maret}.




Let $\mathcal{C} = \{C_1, \ldots, C_n\}$ be a set of conjugacy classes in $\pslr$. We define the \textit{relative $\pslr$-representation variety} associated to the set $\mathcal{C}$ as follows:
\[\text{Hom}^{\text{ne}}_{\mathcal{C}}(\pi_1(S_{g,n},\pslr)) \coloneqq \left\{ \rho \in \text{Hom}^{\text{ne}}(\pi_1(S_{g,n}, \pslr)): \rho(c_i) \in C_i, \forall i\right\}\]

Now, we define the \textit{relative $\pslr$-character variety} of $S_{g,n}$ associated to the set $\mathcal{C}$ as the quotient of the relative representation variety by the same conjugation action of $\pslr$ 
\[\mathcal{X}(S_{g,n}, \mathcal{C}) \coloneqq \text{Hom}^{\text{ne}}_{\mathcal{C}}(\pi_1(S_{g,n},\pslr)) \big /\pslr\]

Note that $\text{Aut}(\pi_1(S_{g,n}))$ does not preserve relative representation varieties. However, the subgroup $\text{Aut}_\mathcal{C}^{\ast}(\pi_1(S_{g,n}))$ i.e., the elements of $\text{Aut}(\pi_1(S_{g,n}))$ that fixes the set of conjugacy classes $\mathcal{C}$, also preserve $\text{Hom}^{\text{ne}}_{\mathcal{C}}(\pi_1(S_{g,n},\pslr))$. Hence, $\text{Out}_{\mathcal{C}}^{\ast}(\pi_1(S_{g,n}))$ preserves $\mathcal{X}(S_{g,n}, \mathcal{C})$. 

The holonomy of hyperbolic cone surfaces with prescribed cone-angles lie in one of these relative character varieties. In the case of the one-holed torus, from \cite[Theorem 3.4.1]{GOLDMAN}, we see that every representation in $\mathcal{\mathcal{X}}(S_{1,1}, \mathcal{C})$ where $\mathcal{C}$ is an elliptic element, is a holonomy of hyperbolic cone torus with one cone-point of cone-angle less than $2 \pi$. For punctured spheres, we know that supra-maximal representations (see \cite{DT}) send every homotopy class representing a simple closed curve to elliptic elements, thus cannot be a holonomy of admissible hyperbolic cone surfaces. In general, following the argument in \cite[Section 5]{TLEFILS}, it can be proved that the holonomy of hyperbolic cone surfaces with prescribed cone-angles form an open subset of relative character varieties. 

\begin{definition}\label{def:MCG}
    The \textit{mapping class group} of $S_{g}$, denote by $\text{MCG}(S_{g})$, is defined as the group of isotopy classes of diffeomorphisms.
    The \textit{mapping class group} of $S_{g,n}$, denoted by $\text{MCG}(S_{g,n})$, is the group of isotopy classes of diffeomorphisms that preserves the conjugacy class of the simple closed curve around each puncture. More precisely, if there are $n$ marked points on the closed surface $S_g$, $\text{MCG}(S_{g,n})$ is given by the isotopy classes of diffeomorphisms of $S_g$ which fix each of these marked points. Note that the isotopy also must fix each marked point. 
\end{definition}

Let $\mathcal{C}$ be the set of conjugacy classes of the simple closed curves around the punctures of $S_{g,n}$. By Dehn-Nielsen-Baer theorem \cite[Theorem 8.8]{FB}, we see that $\text{MCG}(S_{g,n})$ is isomorphic to $\text{Out}^{\ast}_{\mathcal{C}}(\pi_1(S_{g,n}))$. Hence, we have an action of $\text{MCG}(S_{g,n})$ on $\mathcal{X}(S_{g,n}, \mathcal{C})$. We will look at domain of discontinuities for this action in the next section.



\section{Primitive Stability and Simple Stability}\label{sec:SimpleStability}
Let $F_n = \langle a_1, \ldots, a_n \rangle$ i.e., a free group on $n$ letters. We define the \textit{Cayley graph} of $F_n$, denoted by $\mathcal{C}(F_n)$ as follows: vertices of this graph is given by the elements of $F_n$ and two vertices $(w_1, w_2)$ have an edge between them if $w_1^{-1} w_2 \in \{a_1, \ldots,a_n,a_1^{-1},\ldots,a_n^{-1}\}$. Observe that $\mathcal{C}(F_n)$ is a regular $2n$-valence graph with countably infinite vertices. It can also be thought of as the universal cover of bouquet of $n$ circles, $B_n$, thus identifying the fundamental group of $B_n$ to $F_n$. 

\begin{figure}[h]
	\includegraphics[width=8cm]{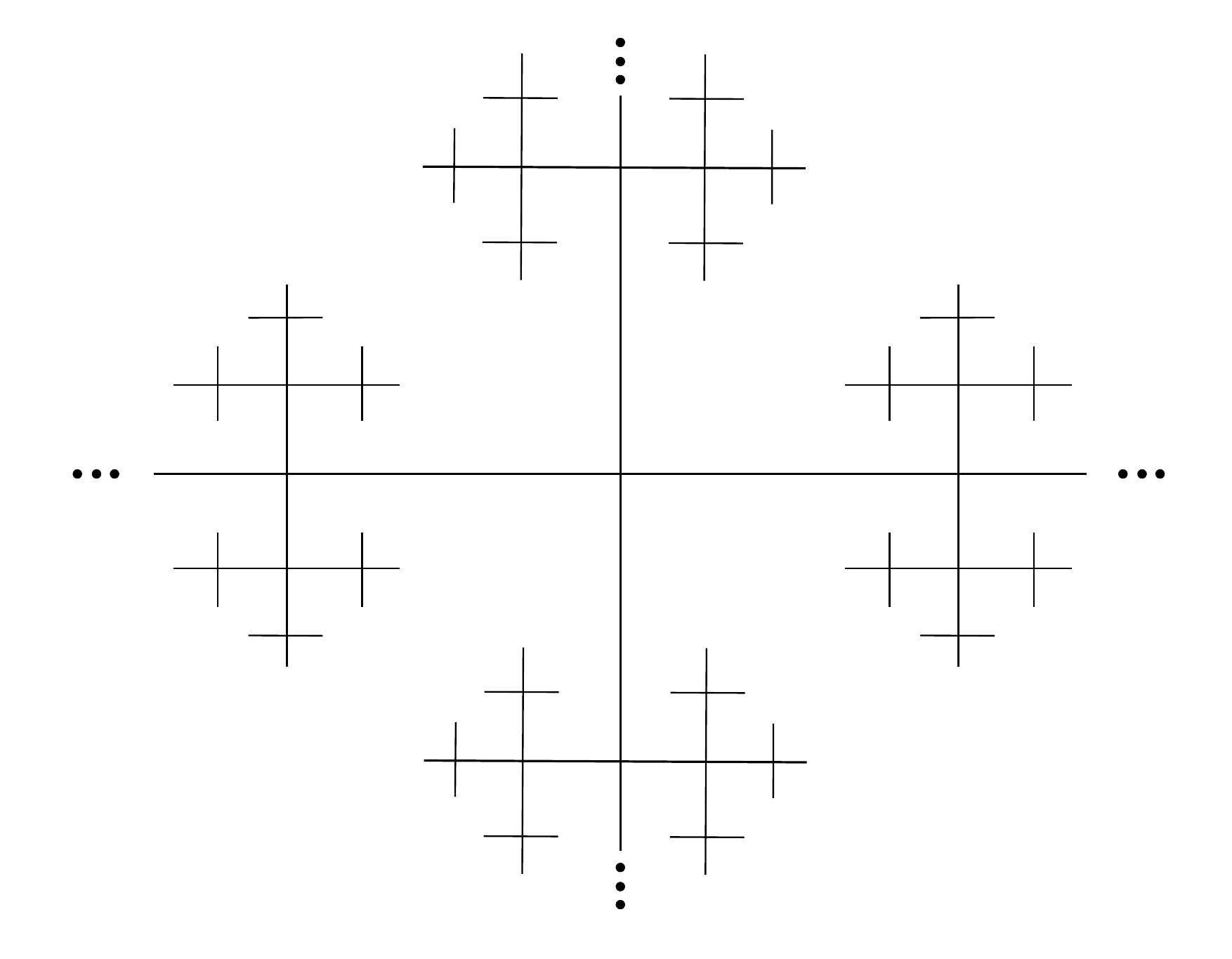}
	\caption{$\mathcal{C}(F_2)$}
	\label{fig:F2}
\end{figure}
 
Given a cyclically reduced word $w \in F_n$, thinking of it as a loop $w: (S^1,1) \rightarrow (B_n, p)$, there exists a unique lift $\widetilde{w}: \mathbb{R} \rightarrow \mathcal{C}(F_n)$ such that $\widetilde{w}(0) = e$, where $e$ denotes the empty word. Suppose $w =  w_1 \cdots w_n$, one can see that this lift is given by the path passing through $e$ that follows the infinite reduced word on  $\cdots w_1 \cdots w_n \cdots w_1 \cdots w_n \cdots$. A word $w \in F_n$ is said to be \textit{primitive} if it belongs to some free generating set of $F_n$. 

Let $\rho: F_n \rightarrow \pslr$ be a representation and $z \in \mathbb{H}^2$ be a fixed basepoint. We can define a map $\tau_{\rho,z}: \mathcal{C}(F_n) \rightarrow \mathbb{H}^2$ as follows:
\begin{itemize}
	\item $\tau_{\rho,z}(e) = z$.
	\item If $(v,w)$ is an edge in $\mathcal{C}(F_n)$, then it maps to the geodesic joining $\rho(v)(z)$ and $\rho(w)(z)$.
\end{itemize}
It is easy to note that $\tau_{\rho,z}$ as defined above is $\rho$-equivariant.

\begin{definition}
	Fix $z \in \mathbb{H}^2$. A representation $\rho:F_n \rightarrow \pslr$ is said to be \textit{primitive-stable}, if there exists $C \geq 1,\epsilon > 0$ such that for any lift $\widetilde{w}: \mathbb{R} \rightarrow \mathcal{C}(F_n)$ of a cyclically reduced primitive word $w \in F_n$, the following holds for all $t,s \in \mathbb{R}$:
	\[\dfrac{1}{C}|t-s| - \epsilon \leq d(\gamma_{\widetilde{w}}(t),\gamma_{\widetilde{w}}(s)) \leq C|t-s|+ \epsilon\] 
	where $\gamma_{\widetilde{w}} \coloneqq \tau_{\rho,z}(\widetilde{w})$. Here, note that the constants $(C, \epsilon)$ are independent of the primitive word $w$ and its lift.
\end{definition}

Schottky representations are easily seen to be primitive-stable (see \cite[Lemma 3.2]{Minsky}). In \cite{Minsky}, Minsky proves that the set of primitive-stable representations $\mathcal{PS}(F_n)$ is strictly larger that the set of Schottky representations (see \cite[Theorem 1.1]{Minsky}).
We slightly modify the definition of primitive stability to suit our context of mapping class group actions on relative character varieties, by interpreting $F_n$ as a fundamental group of a surface with punctures. In this section, we assume that all surfaces have genus at least one and at least one puncture. This implies that $\pi_1(S_{g,n})$ is free. 

\begin{definition}\label{def:SimpleStability}
	Fix $z \in \mathbb{H}^2$. A representation $\rho:\pi_1(S_{g,n}) \rightarrow \pslr$ is said to be \textit{simple-stable}, if there exists $C \geq 1,\epsilon > 0$ such that for the lift $\widetilde{w}: \mathbb{R} \rightarrow \mathcal{C}(\pi_1(S_{g,n}))$ of a cyclically reduced word representing a non-separating simple closed curve $w \in \pi_1(S_{g,n})$, the following holds for all $t,s \in \mathbb{R}$:
	\[\dfrac{1}{C}|t-s| - \epsilon \leq d(\gamma_{\widetilde{w}}(t),\gamma_{\widetilde{w}}(s)) \leq C|t-s|+ \epsilon\] 
	where $\gamma_{\widetilde{w}} \coloneqq \tau_{\rho,z}(\widetilde{w})$. Here, again the constants $(C, \epsilon)$ are independent of the simple closed curve $w$ and its lift.
\end{definition}

Denote the set of simple-stable representations of $\pi_1(S_{g,n})$ by $\mathcal{SS}(S_{g,n})$ and the set of primitive-stable representations by $\mathcal{PS}(S_{g,n})$. As every non-separating curve can be taken to another via a homeomorphism, we see that all non-separating curves are primitive elements of $\pi_1(S_{g,n})$. This implies that $\mathcal{PS}(S_{g,n}) \subset \mathcal{SS}(S_{g,n})$. 

\begin{remark}{\ } \begin{enumerate}
    \item We do not include separating curves in our definition because they may not be primitive. For example, the commutator of two elements of fundamental groups having simple closed curve representatives intersecting at one point represents a separating simple closed curve that is not primitive.  
    \item The definitions for primitive-stability and simple-stability can be extended to $\pslc$ representations by replacing $\mathbb{H}^2$ with $\mathbb{H}^3$.
\end{enumerate}
    
\end{remark}

\begin{remark}\label{rem:simpstabaltdef}
    To prove primitive stability or simple stability, it is enough to prove that there exists $C \geq 1, \epsilon>0$ such that
    \[\dfrac{1}{C}\Vert w^\prime \Vert - \epsilon \leq d(O, \rho(w^\prime) \cdot O) \leq C \Vert w^\prime \Vert+ \epsilon\] 
    for every subword $w'$ of a infinite reduced word corresponding to the lifts of primitive elements or simple closed curves. This is because these subwords form a quasi-dense subset $\gamma_{\widetilde{w}}\left(\mathbb{Z}\right)$ of $\gamma_{\widetilde{w}}\left(\mathbb{R}\right)$.
\end{remark}

      

In the following proposition, we see some first examples of simple-stable representations. They arise as holonomy of hyperbolic structures (not necessarily complete) on punctured surfaces. 

\begin{proposition}{\ }
    \begin{enumerate}
        \item The holonomy of a complete hyperbolic structure on $S_{g,n}$ with finite-area is simple-stable.
        \item The holonomy of a hyperbolic structure on $S_{g,n}$ with totally geodesic boundary is primitive-stable.
    \end{enumerate}
\end{proposition}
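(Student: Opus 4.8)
The plan is to handle both cases by the same mechanism: realize the relevant words (simple closed curves, resp. primitive elements) as geodesics on the hyperbolic surface, and use the fact that geodesic length on a hyperbolic surface behaves well under taking the developing map. The key observation is that the orbit map $\tau_{\rho,z}$ traces, along the lift $\widetilde{w}$ of a cyclically reduced word $w$, a piecewise-geodesic path in $\mathbb{H}^2$ that stays uniformly close to the geodesic axis (or the developed image of the geodesic) representing $w$; so the quasigeodesic inequality in Definition~\ref{def:SimpleStability} amounts to a uniform comparison between combinatorial word length and translation length.

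For part (1): let $\rho$ be the holonomy of a complete finite-area hyperbolic structure on $S_{g,n}$, so $\rho$ is discrete and faithful with parabolic peripheral elements. Fix the generating set $\mathcal{A}$ and the basepoint $z$. First I would invoke the Milnor–\v{S}varc lemma: $\rho$ identifies $\pi_1(S_{g,n})$ with a lattice in $\pslr$ acting on $\mathbb{H}^2$, so the orbit map $w \mapsto \rho(w)\cdot z$ is a quasi-isometry from the word metric on $\pi_1(S_{g,n})$ onto $\rho(\pi_1)\cdot z$ \emph{with respect to the induced path metric on the orbit} — but the subtlety is that the induced metric differs from the $\mathbb{H}^2$-metric precisely near the cusps, where long words wrapping around a puncture are short in $\mathbb{H}^2$. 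This is exactly why the statement restricts to \emph{simple} closed curves: by Proposition~\ref{prop:selfintersection}, a simple closed curve cannot wrap around a puncture more than once, so its cyclically reduced representative, and every subword thereof along the lift, stays in a compact core of the surface (uniformly, over all simple closed curves — here one uses that there are finitely many cusps and that a simple geodesic enters each cusp neighborhood in a controlled way, cf. the collar/Margulis lemma). On that compact core the induced metric and the hyperbolic metric are bi-Lipschitz with a uniform constant, so Milnor–\v{S}varc on the core, combined with the fact that the combinatorial length $\|w'\|$ of a subword of the cyclic word equals (up to bounded error) the distance traveled, yields the two-sided inequality of Remark~\ref{rem:simpstabaltdef} with constants independent of $w$. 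I would phrase this via the geodesic representative $c$ of $w$ furnished by Theorem~\ref{thm:scgeodesic_existence}: the axis of $\rho(w)$ develops to the lift of $c$, $\widetilde{w}$ fellow-travels this axis with uniformly bounded Hausdorff distance, and $d(\gamma_{\widetilde w}(t),\gamma_{\widetilde w}(s))$ therefore comparable to $|t-s|$ times the (uniformly bounded below and above per unit word length) translation length.

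For part (2): $\rho$ is the holonomy of a hyperbolic structure with totally geodesic boundary, hence $\rho(\pi_1)$ is a convex-cocompact (in fact Schottky-type) Fuchsian group — there are no cusps at all, every nontrivial element is hyperbolic, and the orbit map is a genuine quasi-isometry from $\pi_1(S_{g,n})$ with the word metric onto $\mathbb{H}^2$ by Milnor–\v{S}varc. Since primitive elements are in particular elements of $\pi_1$, the quasigeodesic inequality holds for \emph{all} of them with the single pair of constants coming from the quasi-isometry constant of the orbit map (the axis of any $\rho(w)$ is a $\rho(w)$-invariant geodesic, $\widetilde w$ stays within bounded distance of it, and word length along $\widetilde w$ is comparable to hyperbolic distance). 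So part (2) is essentially immediate from convex cocompactness, and in fact gives primitive-stability — indeed it shows every element, not just primitive ones, satisfies the inequality, which is a stronger ``Schottky-like'' statement; this is consistent with the remark after Definition~\ref{def:SimpleStability} that Schottky representations are primitive-stable. One should double check the genus-$\geq 1$, $n\geq 1$ standing assumption is compatible and that ``hyperbolic structure with totally geodesic boundary'' is being taken to mean the interior of a compact hyperbolic surface with nonempty geodesic boundary, so that $\pi_1$ is free and the group is convex cocompact with limit set a Cantor set.

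\textbf{Main obstacle.} The hard part is entirely in part (1): making the phrase ``simple closed geodesics enter the cusp neighborhoods in a uniformly controlled way'' precise and uniform over the infinitely many simple closed curves. Concretely, one must show there is a fixed compact $K \subset S_{g,n}$ (independent of $w$) such that every subword $w'$ appearing along the lift $\widetilde w$ of a cyclically reduced simple closed curve satisfies $d(z,\rho(w')z)$ comparable to $\|w'\|$ — the danger being a subword that corresponds to an arc pushed deep into a cusp, where a bounded hyperbolic displacement hides unbounded word length. The resolution is the combination of Proposition~\ref{prop:selfintersection} (no multiple wrapping, so the arc makes at most one ``pass'' by each cusp) with the Margulis lemma bound on how word length can accumulate in a standard cusp horoball during a single pass; I expect the cleanest writeup routes this through the existence and uniqueness of the simple geodesic representative (Theorem~\ref{thm:scgeodesic_existence}) applied to the \emph{cusped} surface and a collar-lemma estimate, rather than a hands-on analysis of subwords.
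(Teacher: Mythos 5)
Your route is the same as the paper's: for (1) you confine simple closed geodesics (and the relevant subwords) to a compact core obtained by truncating the cusps and apply Milnor--\v{S}varc there, and for (2) you use that the group acts cocompactly on a convex invariant subset, so lifts of primitive words go to uniform quasi-geodesics. However, two intermediate assertions are false as written, and the first would break the proof if it were the load-bearing step. The induced path metric on the truncated universal cover $\widetilde{K}$ is \emph{not} bi-Lipschitz to the restriction of $d_{\mathbb{H}^2}$: two points on a bounding horocycle at hyperbolic distance $d$ are at path distance $2\sinh(d/2)$ in the horoball complement, an exponential distortion. Indeed, if that bi-Lipschitz comparison held, your argument would yield $d(z,\rho(g)\cdot z)\gtrsim \Vert g \Vert$ for \emph{every} $g\in\pi_1(S_{g,n})$; but for $g=c^k$ with $c$ peripheral, $\rho(c)$ is parabolic and $d(z,\rho(c^k)\cdot z)$ grows only like $2\log k$, so no such bound can hold (this is exactly why the statement is simple-stability, not primitive-stability, when $n\geq 2$). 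What you actually need---and what your ``main obstacle'' paragraph correctly isolates---is that for subwords $w'$ of the bi-infinite word of a simple closed curve, the $\mathbb{H}^2$-geodesic from $z$ to $\rho(w')\cdot z$ penetrates the horoballs only a bounded amount, because a simple curve cannot wrap a cusp more than once (Proposition~\ref{prop:selfintersection}; the paper invokes the Birman--Series fact that a geodesic going deep into a cusp neighbourhood must self-intersect). With that, $d_{\mathbb{H}^2}(z,\rho(w')\cdot z)$ is comparable to the $\widetilde{K}$-path distance for these particular pairs, and Milnor--\v{S}varc on $(\widetilde{K},d_{\widetilde{K}})$ (equivalently, fellow-travelling of the lift of the simple closed geodesic, which is an honest $\mathbb{H}^2$-geodesic lying in $\widetilde{K}$) finishes, exactly as in the paper. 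Similarly, in (2) the orbit map is not a quasi-isometry onto $\mathbb{H}^2$ (a free group is not quasi-isometric to $\mathbb{H}^2$); what Milnor--\v{S}varc gives, and what the paper states, is a quasi-isometry onto a convex $\pi_1$-invariant subset (the convex hull of the limit set), hence a quasi-isometric embedding into $\mathbb{H}^2$, which is all your argument needs.
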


\begin{proof}{\ }
    \begin{enumerate}
        \item Firstly, note that all essential simple closed geodesics of the complete hyperbolic surface with finite-area lies inside a compact subset $K$ obtained by chopping off cusp-neighbourhoods. This is because of the following: if a geodesic gets sufficiently deep into the cusp-neighbourhood, its lift in the upper-half plane intersects adjacent sides of the fundamental domain with vertex at the lift of the puncture and thus, from \cite[Lemma 5.1]{BirmanSeries}, it will have a self-intersection.  This compact subset $K$ retracts onto the 1-skeleton which can be identified with the bouquet $B_{2g+n-1}$. This naturally lifts to a quasi-isometry between $\widetilde{K} \subset \mathbb{H}^2$ and $\mathcal{C}(\pi_1(S_{g,n}))$. 
        \item Let $\rho:\pi_1(S_{g,n}) \rightarrow \pslr$ be the holonomy of a hyperbolic structure with totally geodesic boundary. There exists a convex $\pi_1(S_{g,n})$-invariant subset $\mathscr{C}$ of $\mathbb{H}^2$ such that $\pi_1(S_{g,n})$ acts properly and co-compactly. This gives us a quasi-isometry between $\mathcal{C}(\pi_1(S_{g,n}))$ and $\mathscr{C}$. Hence, it takes all bi-infinite geodesics in $\mathcal{C}(\pi_1(S_{g,n}))$, in particular the lifts of primitive words, to uniform quasi-geodesics in $\mathbb{H}^2$.
    \end{enumerate}
\end{proof}
Observe that, in the proposition above, both the cases we have are of discrete representations. In the next section, we will prove the existence of an infinite family of indiscrete simple-stable representations.
\begin{theorem}
	$\mathcal{SS}(S_{g,n})$ is an open subset of $\mathcal{X}(S_{g,n})$.
\end{theorem}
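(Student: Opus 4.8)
The plan is to mimic Minsky's openness argument for $\mathcal{PS}(F_n)$ (\cite[Theorem 1.1]{Minsky}), adapted to the smaller family of words coming from non-separating simple closed curves. The key point is that the simple-stability inequality is uniform over the (infinite) collection of simple closed curves, and the standard trick to handle this is to reduce an open neighbourhood's worth of quasi-geodesic control to a \emph{finite} amount of local data, via a Morse-type / local-to-global lemma for quasi-geodesics in $\mathbb{H}^2$. First I would recall (or prove) the following local-to-global statement: for each $C\geq 1,\epsilon>0$ there exist $C'\geq 1,\epsilon'>0$ and an integer $N=N(C,\epsilon)$ such that if $\gamma\co\Z\to\mathbb{H}^2$ is a path for which every subsegment of combinatorial length $\leq N$ is a $(C,\epsilon)$-quasi-geodesic, then $\gamma$ is a global $(C',\epsilon')$-quasi-geodesic. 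This is a consequence of the stability of quasi-geodesics in a hyperbolic space (the Morse lemma) together with a standard concatenation argument; it is exactly the mechanism Minsky uses, and it is what makes the condition ``open.''

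With that lemma in hand, the argument runs as follows. Suppose $\rho_0\in\mathcal{SS}(S_{g,n})$ with constants $(C,\epsilon)$ for the basepoint $z$. Fix the integer $N=N(2C,2\epsilon)$ (say) from the local-to-global lemma. The set of \emph{words of length $\leq N$} that occur as subwords of cyclically reduced representatives of non-separating simple closed curves is finite — call it $W_N$. For each $w'\in W_N$ the map $\rho\mapsto \bigl(\rho(u)\cdot z\bigr)_{u\ \text{a prefix of}\ w'}$ is continuous on $\mathcal{X}(S_{g,n})$ (via the continuity of the conjugation-invariant evaluation; one works with a fixed representative in each conjugacy class, or equivalently in $\mathrm{Hom}$ and descends). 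Since $\rho_0$ satisfies the $(C,\epsilon)$-quasi-geodesic estimate strictly on each of the finitely many segments coming from $W_N$, there is an open neighbourhood $\mathcal{U}$ of $[\rho_0]$ in $\mathcal{X}(S_{g,n})$ on which \emph{every} $w'\in W_N$ still gives a $(2C,2\epsilon)$-quasi-geodesic segment. Now for any non-separating simple closed curve $w$, any lift $\tilde w\co\R\to\mathcal{C}(\pi_1(S_{g,n}))$, and any $\rho\in\mathcal{U}$, every length-$\leq N$ subsegment of $\gamma_{\tilde w}=\tau_{\rho,z}(\tilde w)$ is (a translate of) one of these $W_N$-segments, hence a $(2C,2\epsilon)$-quasi-geodesic; by the local-to-global lemma $\gamma_{\tilde w}$ is a global $(C',\epsilon')$-quasi-geodesic with constants depending only on $(C,\epsilon)$, not on $w$. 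By Remark~\ref{rem:simpstabaltdef} this is exactly simple-stability, so $\mathcal{U}\subset\mathcal{SS}(S_{g,n})$.

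Two technical points need care. The first, and the main obstacle, is establishing the local-to-global lemma with the quantifier in the right place — in particular getting the output constants $(C',\epsilon')$ and the threshold $N$ to depend only on $(C,\epsilon)$, uniformly over all the paths $\gamma_{\tilde w}$. This is where hyperbolicity of $\mathbb{H}^2$ is essential: a path all of whose bounded subsegments fellow-travel geodesics must itself fellow-travel a geodesic, by a thin-triangles bootstrapping argument; one should cite the standard Morse/stability lemma (e.g.\ from Bridson--Haefliger) rather than reprove it. The second point is the bookkeeping that the translates of subsegments of lifts of simple closed curves of combinatorial length $\leq N$ really do form a finite set and that each is realised by some fixed group element whose evaluation depends continuously on $[\rho]$; this uses that $\mathcal{C}(\pi_1(S_{g,n}))$ is locally finite (valence $2(2g+n-1)$) so only finitely many reduced words of length $\leq N$ exist at all, and that $\tau_{\rho,z}$ is $\rho$-equivariant so a subsegment starting at the vertex $v$ is the $\rho(v)$-translate of one starting at $e$. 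Finally, one should note the estimate is independent of the choice of basepoint $z$ up to adjusting $\epsilon$, so the resulting open set does not depend on $z$; hence $\mathcal{SS}(S_{g,n})$ is genuinely open in $\mathcal{X}(S_{g,n})$.
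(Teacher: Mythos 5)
Your argument is correct and is essentially the paper's: the paper disposes of this theorem by citing Minsky's Lemma 3.2, whose proof is precisely the local-to-global (Morse) bootstrapping plus finiteness-of-bounded-length-subwords plus continuity argument you spell out, applied to the invariant family of lifts of non-separating simple closed curves. So you have in effect reproved the cited lemma rather than found a different route.
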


\begin{proof}
	This follows directly from \cite[Lemma 3.2]{Minsky}. The proof in the paper also shows that the uniform quasi-geodesic constants vary continuously with respect to the representations.
\end{proof}

For $w \in \pi_1(S_{g,n})$, let $\Vert w \Vert$ be the word length of a cyclically reduced representative of $w$. In other words, it is the minimum over the length of all reduced words of the elements in the conjugacy class of $w$. 

\begin{lemma}\label{lem:boundedwordlength_mappingclasses}
	The set \[\left\{[\psi] \in \text{MCG}(S_{g,n}) \Big \vert \Vert \psi(w) \Vert \leq C \Vert w \Vert, \text{ for all non-separating simple closed curves } [w] \right\}\] is finite for any $C > 0$. Here, $[w]$ denotes the conjugacy class of $w$ in $\pi_1(S_{g,n})$.
\end{lemma}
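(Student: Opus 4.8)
The plan is to show that the stated set embeds into a set of mapping classes whose action on a fixed finite collection of simple closed curves is uniformly bounded in word length, and then invoke a finiteness result — essentially the fact that a mapping class is determined by its action on finitely many curves (a ``marking''), together with the fact that only finitely many isotopy classes of simple closed curves have bounded word length. First I would fix a finite generating set $\mathcal{A}$ of $\pi_1(S_{g,n})$ consisting of (conjugacy classes of) non-separating simple closed curves, or more robustly a \emph{filling system} $\Gamma = \{\gamma_1,\dots,\gamma_m\}$ of non-separating simple closed curves — one whose complement in $S_{g,n}$ is a union of disks and once-punctured disks. Such a filling system exists and can be chosen to consist entirely of non-separating curves when $g \geq 1$. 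The key classical input is that a mapping class $[\psi]$ is determined by the tuple $([\psi(\gamma_1)],\dots,[\psi(\gamma_m)])$ of free homotopy classes when $\Gamma$ fills: two homeomorphisms agreeing (up to isotopy) on a filling system are isotopic. Hence the map $[\psi] \mapsto ([\psi(\gamma_1)],\dots,[\psi(\gamma_m)])$ is injective on $\text{MCG}(S_{g,n})$.

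Next I would bound the image. If $[\psi]$ lies in the set in question, then $\Vert \psi(\gamma_i)\Vert \leq C\Vert\gamma_i\Vert \leq C\max_i\Vert\gamma_i\Vert =: C'$ for every $i$, since each $\gamma_i$ is a non-separating simple closed curve. So the image of our set under the injective marking map lands in $\{([w_1],\dots,[w_m]) : \Vert w_i\Vert \leq C' \text{ for all } i\}$. The crucial point is now that for any bound $C'>0$ there are only finitely many conjugacy classes $[w]$ in $\pi_1(S_{g,n}) \cong F_{2g+n-1}$ with $\Vert w\Vert \leq C'$: a cyclically reduced word of length at most $C'$ in a fixed finite alphabet is one of finitely many words, and each conjugacy class of such length contains at least one such representative. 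Therefore the target set of tuples is finite, and by injectivity the original set of mapping classes is finite.

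The main obstacle — and the step needing the most care — is the \emph{injectivity} of the marking map: one must genuinely use that $\Gamma$ fills (mere generation of $\pi_1$ is not obviously enough at the level of homeomorphisms up to isotopy, and one should be careful about punctures being preserved, which is built into our definition of $\text{MCG}(S_{g,n})$ via Definition~\ref{def:MCG}). I would argue this by the standard change-of-coordinates / Alexander-method argument: a homeomorphism of $S_{g,n}$ fixing each $[\gamma_i]$ up to isotopy can be isotoped to fix a representative of each $\gamma_i$ setwise, hence to fix their union; cutting along $\Gamma$ leaves disks and once-punctured disks, on each of which a homeomorphism fixing the boundary (and the puncture) is isotopic to the identity rel boundary, so the original homeomorphism is isotopic to the identity. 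A secondary technical point is ensuring one may choose the filling system $\Gamma$ to consist of non-separating curves so that the hypothesis ``$\Vert\psi(w)\Vert \le C\Vert w\Vert$ for all non-separating simple closed curves $w$'' applies to each $\gamma_i$; for $g\ge 1$ this is routine, since any simple closed curve can be ``stabilized'' to a non-separating one by a small isotopy through the genus, or one can enlarge $\Gamma$ by adding non-separating curves while keeping it filling. With these in hand, the finiteness follows immediately.
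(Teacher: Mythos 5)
Your proposal is correct and follows essentially the same route as the paper: choose a filling by non-separating simple closed curves, invoke the Alexander method to see that a mapping class is determined by the (free homotopy classes of the) images of the filling curves, bound $\Vert\psi(\gamma_i)\Vert$ by $C\max_i\Vert\gamma_i\Vert$, and conclude from the finiteness of conjugacy classes of bounded cyclically reduced length. The only cosmetic difference is that the paper produces its filling via the admissible-filling construction on a hyperbolic cone surface (Lemma~\ref{lem:AdmissibleFillingExistence}) to guarantee the Alexander-method hypotheses, whereas you construct it directly topologically, which is equally fine.
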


\begin{proof}
        To prove this lemma, we rely on certain definitions and results from Section~\ref{sec:SBQconditions}, which is self-contained and can be understood independently.
         \begin{figure}[h]
		\includegraphics[width=.5\linewidth]{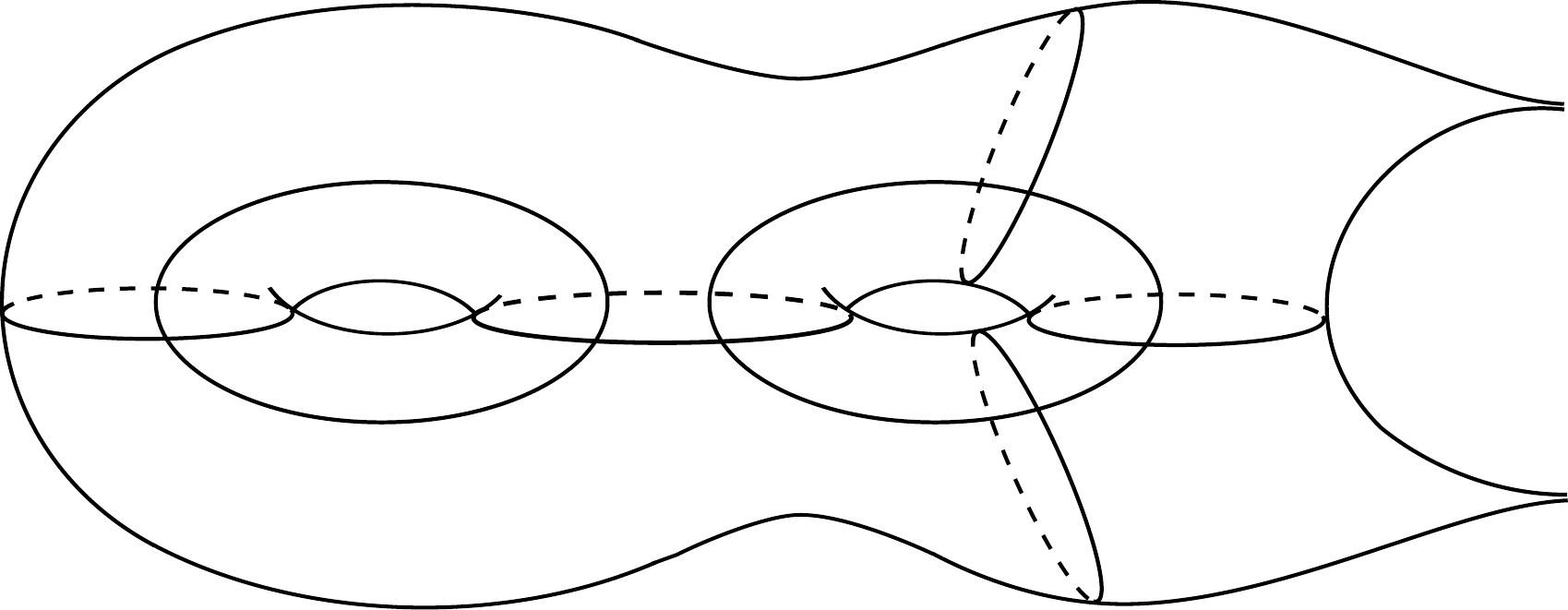}
		\captionof{figure}{A filling for genus 2 surface with two punctures}
		\label{fig:Filling_genus2}
	\end{figure}
    
        Firstly, as free homotopy classes of curves are in one-to-one correspondence with conjugacy classes in $\pi_1(S_{g,n})$, one can talk about a conjugacy class of a simple closed curve being non-separating. Thus, the set in the statement is well-defined.
        
        We use the Alexander method \cite[Proposition 2.8]{FB} to prove this proposition.  It states that a mapping class is completely determined by its action on a filling (see Definition~\ref{def:Filling}) of $S_{g,n}$ consisting of finitely many non-separating simple closed curves $\alpha_1, \cdots, \alpha_N$ such that:
        \begin{itemize}
            \item $\alpha_1, \ldots, \alpha_N$ are in general position
            \item $\alpha_i$s belong to distinct isotopy classes.
            \item given any triple of them, there exists at least a pair among the triple that are disjoint.
        \end{itemize}

        To construct such a filling, we begin with giving an admissible hyperbolic cone surface structure on $S_{g,n}$. Now, we apply Lemma~\ref{lem:AdmissibleFillingExistence} to find an admissible filling (see Definition~\ref{def:AdmissibleFilling}). From the construction of admissible filling, we observe that it satisfies the above three properties.
        
        Let $L = \displaystyle \max_{i=1}^N \Vert \alpha_i \Vert$. This is well-defined, since $\Vert \cdot \Vert$ depends only on the conjugacy class. Now, we know that $\Vert \psi(\alpha_i) \Vert \leq C \Vert \alpha_i \Vert \leq CL$. Since there are only finitely many reduced words of bounded length, this implies that there are only finitely many choices for the images of $\alpha_i$. Hence, by the Alexander method \cite[Proposition 2.8]{FB}, we see that there are only finitely many such mapping classes.
\end{proof}


We will now prove the first theorem stated in the introduction.

\ProperDiscontinuity*
\begin{proof}
	Let $B$ be a compact subset of $\mathcal{SS}(S_{g,n})$ and suppose that for $[\phi] \in \text{MCG}(S_{g,n})$, $\left([\phi]\right) \cdot B \cap B \neq \emptyset$. We need to show that there exists only finitely many such $[\phi]$. Now, we estimate $\Vert \phi(w) \Vert$, where $w$ is a cyclically reduced word representing a simple closed curve in $S_{g,n}$. 

        Let $S = \{a_1, b_1, \ldots, a_g,b_g,c_1,\ldots,c_n\}$ be the standard generating set of $\pi_1(S_{g,n})$ and $O \in \mathbb{H}^2$. First, let $K = \max\left\{d(O, \rho(v) \cdot O) \Big \vert v \in S \cup S^{-1} \right\}$. Now, suppose that $w = w_1 w_2 \ldots w_m$, where $w_i \in S \cup S^{-1}$. Then,
	\begin{align*}
		d(O, \rho(w)\cdot O) &\leq d(O, \rho(w_1) \cdot O) + d(\rho(w_1) \cdot O, \rho(w_1 w_2) \cdot O) + \cdots \\& \hspace{2em} \cdots+ d(\rho(w_1 \cdots w_{m-1}) \cdot O, \rho(w) \cdot O) \\
		&= d(O, \rho(w_1) \cdot O) + d(O, \rho(w_2) \cdot O) + \cdots + d(O, \rho(w_n) \cdot O) \\
		&\leq K + \cdots +K\\
		&= K \Vert w \Vert 
	\end{align*}
	
	Now, by compactness of $B$ and continuity, there exists a $L > 0$, such that \[\dfrac{d(O,\rho(w). O)}{\Vert w \Vert} \leq L\] for all $\rho \in B$.
	
	  By simple-stability, we get that, for $\rho$,
	\[\dfrac{d(O,\rho(w). O)}{\Vert w \Vert} \geq k_\rho > 0.\]
	
	  Again, by continuity and compactness of $B$, we get that for all $\rho \in B$, there exists a uniform constant $k > 0$ such that,
	\[\dfrac{d(O,\rho(w). O)}{\Vert w \Vert} \geq k > 0.\]
	
	Suppose $\rho \in B$ such that $[\phi] \cdot \rho \in B$, then 
	\begin{align*}
		\dfrac{d(O, \rho\left(\phi(w)\right) \cdot O)}{\Vert \phi(w) \Vert} &\geq k \\
		\implies \Vert \phi(w) \Vert &\leq \frac{1}{k} d(O, \rho\left(\phi(w)\right)) \\
		\implies \Vert \phi(w) \Vert &\leq \frac{1}{k} d(O, [\phi].\rho(w) \cdot O) \\
		\implies \Vert \phi(w) \Vert &\leq \frac{K}{k} \Vert w \Vert
 	\end{align*}
	
	There are only finitely many mapping classes that satisfy the last inequality for all essential simple closed curves, by Lemma~\ref{lem:boundedwordlength_mappingclasses}. Hence, the action is properly discontinuous.

    To prove the second part of the theorem, it is enough to prove that $\text{MCG}(S_{g,n})$ preserves $\mathcal{X}(S_{g,n}, \mathcal{C})$. As $\text{MCG}(S_{g,n})$ preserves the conjugacy classes of simple closed curve around the punctures, it preserves the set $\mathcal{C}$. Thus, $\mathcal{X}(S_{g,n}, \mathcal{C})$ is $\text{MCG}(S_{g,n})$ invariant.
	
\end{proof}

\begin{remark}
	The above theorem can be thought of as an analogue of \cite[Theorem 3.3]{Minsky}. By Dehn-Nielsen-Baer theorem \cite[Theorem 8.8]{FB}, we see that $\textup{MCG}(S_{g,n}) \subset \textup{Out}(\pi_1(S_{g,n}))$. Hence, $\textup{MCG}(S_{g,n})$ has a domain of discontinuity at least as large as that of $\textup{Out}(\pi_1(S_{g,n}))$. We shall see later that for $n \geq 2$, we have a strictly larger domain of discontinuity for $\textup{MCG}(S_{g,n})$ (see Remark~\ref{rem:MainTheorems}).
\end{remark}


\section{Holonomies of Hyperbolic Cone Surfaces}\label{sec:Holonomies}

Let $S$ be a hyperbolic cone surface and $P \subset S$ be the set of cone-points in $S$. Note that $S \setminus P$ has a hyperbolic atlas i.e., charts into $\mathbb{H}^2$ with hyperbolic isometries as transition maps. This gives us a developing map-holonomy pair, i.e., there exists a local isometry $\text{dev}: \widetilde{S\setminus P} \rightarrow \mathbb{H}^2$ and a homomorphism 
$\rho: \pi_1(S \setminus P) \rightarrow \pslr$ such that the following diagram commutes.

\[\begin{tikzcd}
	{\widetilde{S \setminus P}} && {\mathbb{H}^2} \\
	\\
	{\widetilde{S \setminus P}} && {\mathbb{H}^2}
	\arrow["{\text{dev}}", from=1-1, to=1-3]
	\arrow["{\rho(g)}", from=1-3, to=3-3]
	\arrow["g"', from=1-1, to=3-1]
	\arrow["{\text{dev}}"', from=3-1, to=3-3]
\end{tikzcd}\]
The diagram gives us the following:
$$ \text{dev} \circ g = \rho(g) \circ \text{dev} $$

Also, the developing map is unique upto post-composing by an isometry of $\mathbb{H}^2$ i.e., if $\text{dev}': \widetilde{S\setminus P} \rightarrow \pslr$ is another developing map for $S$, then $\text{dev}' = A \circ \text{dev}$, where $A \in \pslr$. The corresponding holonomies differ by conjugation by an element of $\pslr$, i.e., $\rho ' = A \cdot \rho \cdot A^{-1}$. 

\subsection{Proof of Theorem 1.2}
In the rest of this section, we provide the proofs of our main theorem which we restate below:

\ConeSurfacesSimpleStability*


We will prove some facts about cone-surfaces and their universal covers before proving the above theorems. Let $K$ be the compact set obtained after removing cone-neighbourhoods around each cone-point such that all simple closed geodesics lie in $K$. Such a neighbourhood exists because: Theorem~\ref{thm:scgeodesic_existence} gives us that simple closed geodesics do not pass through the cone-points and by Lemma~\ref{lem:cone_nbhd} there exists a neighbourhood of these cone-points that simple closed geodesics do not enter. By proof of Lemma~\ref{lem:cone_nbhd}, this compact set $K$ can be thought of as obtained by removing sectors $\mathcal{S}_i$ around the vertices of the polygon $\mathcal{P}$ corresponding to the cone-points before side identification. The lift of $K$ to the universal cover, $\widetilde{K}$, is thus given by $\left( \mathcal{P} \setminus \bigcup \mathcal{S}_i\right) \times \pi_1(S_{g,n})/ \sim$, such that $(s_j, \gamma) \sim (s_k,\gamma \cdot g_{jk})$, where $s_j, s_k$ are the geodesic sides of the truncated polygon $\mathcal{P} \setminus \bigcup \mathcal{S}_i$ and $g_{jk}$ are the elements of the fundamental group corresponding to the side identification of $s_j$ and $s_k$. This is just the construction in Section~$\ref{sec:HyperbolicConeSurfaces}$ restricted to $K$.

Let $d_{\widetilde{K}}$ be the induced path metric on $\widetilde{K} \subset \widetilde{S \setminus P}$ i.e., distance between two points is obtained by taking the infimum of lengths over the paths, lying completely inside $\widetilde{K}$, joining those two points. It is easy to note that, for all $\widetilde{x}, \widetilde{y} \in \widetilde{K}$,
\[d_{\widetilde{S \setminus P}}(\widetilde{x},\widetilde{y}) \leq d_{\widetilde{K}}(\widetilde{x},\widetilde{y}).\] Now, we prove some facts about the new metric space $\left( \widetilde{K}, d_{\widetilde{K}} \right)$.

\begin{proposition}
	$\left(\widetilde{K}, d_{\widetilde{S \setminus P}}\right)$ is a Cauchy-complete and locally compact metric space.
\end{proposition}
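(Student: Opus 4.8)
The plan is to establish the two properties separately, working with the explicit description of $\widetilde{K}$ as $\bigl(\mathcal{P}\setminus\bigcup\mathcal{S}_i\bigr)\times\pi_1(S_{g,n})/\!\sim$, i.e. as countably many copies of the truncated convex polygon $Q:=\mathcal{P}\setminus\bigcup\mathcal{S}_i$ glued along pairs of geodesic sides. Note $Q$ is a compact convex hyperbolic polygon (convex since $\mathcal{P}$ is convex by Proposition~\ref{prop:admissiblePolygon} and we have only truncated corners by geodesic arcs $\sigma_d$, keeping convexity), hence a compact, hence complete and locally compact, length space. The space $\widetilde{K}$ is then a locally finite union of isometric copies of $Q$ glued isometrically along boundary arcs, and $d_{\widetilde{K}}$ is the induced path metric.

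For local compactness: first I would check that $\widetilde{K}$ is locally finite, meaning each copy of $Q$ meets only finitely many other copies — this is immediate since $Q$ has finitely many sides and each side is identified with exactly one other side of one other copy. Given a point $\widetilde{x}\in\widetilde{K}$, it lies in finitely many copies $Q_1,\dots,Q_m$ of $Q$ (those whose images contain it), and a small metric ball around $\widetilde{x}$ is contained in $Q_1\cup\dots\cup Q_m$, which is a finite union of compact sets, hence compact; intersecting with the closed ball gives a compact neighbourhood. A small point to be careful about: near a cone-point-side of $Q$ (the truncating arc $\sigma_d$) nothing special happens because we removed the sector, so $\widetilde{K}$ genuinely has no non-locally-compact points — this is precisely the reason for passing from $\widetilde{S\setminus P}$ to $\widetilde{K}$.

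For Cauchy-completeness: let $\{\widetilde{x}_k\}$ be a $d_{\widetilde{K}}$-Cauchy sequence. Since the $d_{\widetilde{K}}$-diameter of a single copy of $Q$ is a fixed finite number $D$, and the sequence is Cauchy, for large $k$ all $\widetilde{x}_k$ lie within $d_{\widetilde{K}}$-distance, say, $D$ of a fixed tail point; by local finiteness that $d_{\widetilde{K}}$-ball meets only finitely many copies of $Q$ (a path of bounded length inside $\widetilde{K}$ can pass through only boundedly many copies, since each copy contributes a definite minimum length to cross — here I would use that the minimal distance between two distinct non-adjacent sides of $Q$, or the minimal "width", is bounded below by a positive constant $\epsilon_0$ depending only on $Q$). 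Hence the tail of the sequence lies in a finite subcomplex $Q_{i_1}\cup\dots\cup Q_{i_r}$, which is compact, so the sequence has a convergent subsequence, and a Cauchy sequence with a convergent subsequence converges. Finally I would record the inequality $d_{\widetilde{S\setminus P}}\le d_{\widetilde{K}}$ already noted in the text, so convergence in $d_{\widetilde{K}}$ implies the limit is the same as the (a priori) limit in the ambient space, confirming everything is consistent.

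The main obstacle I anticipate is the bounded-geometry bookkeeping in the completeness argument: one must rule out a Cauchy sequence "escaping to infinity" through infinitely many polygon copies, and the clean way to do this is the quantitative claim that any rectifiable path in $\widetilde{K}$ of length $L$ crosses at most $L/\epsilon_0 + 1$ copies of $Q$, where $\epsilon_0>0$ is an intrinsic lower bound on how far one must travel inside $Q$ to get from one side to a side not sharing a vertex with it (and crossing between copies sharing a vertex can be controlled by a separate finite-valence argument at vertices of the polygonal complex). Establishing this $\epsilon_0$ and handling the vertex case carefully is the one genuinely non-formal step; everything else is the standard fact that a locally finite gluing of compact length spaces along isometric boundary pieces is a complete, locally compact length space.
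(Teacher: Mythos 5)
Your argument addresses the wrong metric. The proposition concerns $\left(\widetilde{K}, d_{\widetilde{S \setminus P}}\right)$, i.e.\ the restriction of the ambient metric of $\widetilde{S \setminus P}$ to $\widetilde{K}$, whereas your completeness argument starts from a $d_{\widetilde{K}}$-Cauchy sequence and your local-finiteness estimate (a path of length $L$ crosses at most $L/\epsilon_0+1$ copies of $Q$) is a statement about the induced path metric. Completeness is not a topological invariant, and the inequality $d_{\widetilde{S\setminus P}}\le d_{\widetilde{K}}$ you invoke at the end goes the wrong way: a $d_{\widetilde{S\setminus P}}$-Cauchy sequence need not be $d_{\widetilde{K}}$-Cauchy, so completeness of the path metric does not yield completeness of the restricted ambient metric (compare an infinite-length spiral accumulating on a circle: path-complete, but not complete in the ambient metric). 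Worse, your key finiteness claim genuinely fails for the ambient metric: around a lift of a cone-point the cover spirals infinitely (the peripheral element has infinite order in $\pi_1(S\setminus P)$), and since the removed sectors still belong to the ambient space $\widetilde{S\setminus P}$, points of $\widetilde{K}$ on the truncation arcs of arbitrarily far-apart sheets lie within ambient distance about $2r$ of one another ($r$ the sector radius), because near-apex shortcuts through the sectors are available. Hence an ambient ball of fixed radius can meet infinitely many copies of $Q$, and the ``tail lies in a finite subcomplex'' step breaks down exactly where it is needed; this is also why the paper reserves properness for $d_{\widetilde{K}}$ and remarks that $\left(\widetilde{K}, d_{\widetilde{S\setminus P}}\right)$ is not geodesic.

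The paper sidesteps all of this with a covering-space argument: the covering map $\pi\colon \left(\widetilde{K}, d_{\widetilde{S\setminus P}}\right)\to (K,d_{S\setminus P})$ is a distance-nonincreasing local isometry, so an ambient-Cauchy sequence projects to a Cauchy sequence in the compact set $K$, which converges; the limit is then lifted through an evenly covered neighbourhood to produce the limit upstairs. Local compactness is immediate because $\widetilde{S\setminus P}$ is a (possibly incomplete) hyperbolic surface, hence locally compact, and $\widetilde{K}$ is closed in it. Your complex-of-polygons argument, once the vertex bookkeeping is carried out, essentially reproves the paper's subsequent statements about $\left(\widetilde{K}, d_{\widetilde{K}}\right)$ (completeness, local compactness, properness), but not the proposition as stated.
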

\begin{proof}
    Let $\pi: \left(\widetilde{K}, d_{\widetilde{S \setminus P}}\right) \rightarrow (K, d_{S \setminus P})$ be the covering map and $(\widetilde{x_n})$ be a Cauchy sequence in $\widetilde{K}$. As $\pi$ is a local isometry, $(x_n) \coloneqq (\pi(\widetilde{x_n}))$ is also a Cauchy sequence. Now, observe that $K$ is compact, in particular, it is complete. This implies that there exists $x \in K$ such that $x_n \rightarrow x$. Take an evenly covered neighbourhood $U \subset K$ of $x$, then there exists $\widetilde{U} \subset \pi^{-1}(U) \cap \widetilde{K}$ such that $(\widetilde{x_m}) \in \widetilde{U}$ for all large $m$. This implies that $\widetilde{x_m} \rightarrow \widetilde{x}$, where $\widetilde{x} \coloneqq \pi^{-1}(x) \cap \widetilde{U}$. 
	
    As every point in $\widetilde{S \setminus P}$ has a neighbourhood isometric to a hyperbolic disk, it is locally compact. Now, as $\widetilde{K}$ is a closed subspace of $\widetilde{S \setminus P}$, $\widetilde{K}$ is also locally compact.  
\end{proof}

\begin{proposition}
	The identity map $\left(\widetilde{K}, d_{\widetilde{K}} \right) \rightarrow \left(\widetilde{K}, d_{\widetilde{S \setminus P}}\right)$ is a homeomorphism.
\end{proposition}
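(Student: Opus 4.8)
The plan is to show that the two metrics $d_{\widetilde K}$ and $d_{\widetilde{S\setminus P}}$ induce the same topology on $\widetilde K$; since the identity map is automatically continuous in one direction (from the already-noted inequality $d_{\widetilde{S\setminus P}}\le d_{\widetilde K}$), the only content is continuity of the inverse, i.e. that $d_{\widetilde{S\setminus P}}$-small implies $d_{\widetilde K}$-small. Equivalently, it suffices to prove that for every $\widetilde x\in\widetilde K$ and every $\varepsilon>0$ there is a $\delta>0$ such that any $\widetilde y\in\widetilde K$ with $d_{\widetilde{S\setminus P}}(\widetilde x,\widetilde y)<\delta$ can be joined to $\widetilde x$ by a path inside $\widetilde K$ of length $<\varepsilon$.

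The key point is that $\widetilde K$ is locally convex enough inside $\widetilde{S\setminus P}$: away from the removed cone-sectors, a neighbourhood of any point of $\widetilde K$ is isometric to a piece of $\mathbb H^2$, and the boundary of $\widetilde K$ consists of geodesic arcs (the arcs $\sigma_d$ of Lemma~\ref{lem:cone_nbhd}, lifted). First I would treat interior points $\widetilde x$ of $\widetilde K$: here a small metric ball of $\widetilde{S\setminus P}$ around $\widetilde x$ lies entirely in $\widetilde K$ and is isometric to a hyperbolic disk, so the $d_{\widetilde{S\setminus P}}$-geodesic to any nearby $\widetilde y$ stays in $\widetilde K$ and the two distances agree locally. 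Next I would treat boundary points $\widetilde x\in\del\widetilde K$, which lie on a lift of some $\sigma_d$. By Lemma~\ref{lem:cone_nbhd} (applied to the cone neighbourhood $U_d$ bounded by $\sigma_d$), the geodesic of $\mathbb H^2$ joining two points on $\sigma_d$, or near it on the $\widetilde K$ side, does not enter the excised sector $U_d$; so again a small half-disk neighbourhood of $\widetilde x$ lies in $\widetilde K$, and a nearby point $\widetilde y\in\widetilde K$ can be joined to $\widetilde x$ by a short geodesic contained in $\widetilde K$. Finally I would treat the vertices of the truncated polygon (the "corners" where two arcs $\sigma_d$ meet, or where a side $\sigma_d$ meets a side of $\mathcal P$): these form a locally finite set, and near such a corner $\widetilde K$ looks like a convex (since all internal angles are $<\pi$, by admissibility) wedge in $\mathbb H^2$, so one can still join nearby points of $\widetilde K$ by short paths inside the wedge. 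Patching these local statements via a Lebesgue-number/compactness argument along a short $d_{\widetilde{S\setminus P}}$-geodesic segment gives the required $\delta$.

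The main obstacle I anticipate is the bookkeeping at the non-smooth points of $\del\widetilde K$ — the corners of the truncated polygon after side identifications — where one must verify that the local picture of $\widetilde K$ is genuinely a convex hyperbolic wedge rather than something with an ingrown angle; this is exactly where admissibility (all cone-angles, hence all the relevant internal angles, $<\pi$) is used, so I would be careful to invoke it explicitly, in the same spirit as Proposition~\ref{prop:admissiblePolygon} and Lemma~\ref{lem:cone_nbhd}. A clean way to package everything is to observe that $\widetilde K$ is a locally convex subset of the hyperbolic-like space $\widetilde{S\setminus P}$ and that for locally convex subsets the intrinsic path metric is locally comparable to the ambient metric; but since $\widetilde{S\setminus P}$ is itself only locally $\mathbb H^2$ (it is incomplete and not globally $\mathrm{CAT}(-1)$), I would phrase the argument purely locally, using the developing map to transport each local neighbourhood into $\mathbb H^2$ where convexity is unambiguous.
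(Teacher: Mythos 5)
Your overall strategy -- continuity of the identity in one direction from $d_{\widetilde{S\setminus P}}\le d_{\widetilde K}$, then a local comparison of the two metrics for the inverse -- is the right one, and it is close in spirit to the paper's proof, which works with small normal balls $\exp_{\widetilde x}(B_{s'}(0))$ around each point and a sequence argument rather than with any convexity of $\widetilde K$. The problem is the step where you treat boundary points, and it comes from a misreading of what $\partial\widetilde K$ actually is. In Lemma~\ref{lem:cone_nbhd} the geodesic $\sigma_d$ is only used to define the radius $r=d_{\mathbb{H}^2}(0,\sigma_d)$; the excised neighbourhood $U_d$ is the metric ball $\{d_{\mathbb{H}^2}(0,\cdot)<r\}$, and in Section~5 the set $K$ is described as obtained by deleting sectors $\mathcal{S}_i$ around the vertices of $\mathcal{P}$, whose outer boundary is a circular arc. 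So near a cone-point the boundary of $\widetilde K$ is (a lift of) a circle of constant distance from that point, not a geodesic arc, and $\widetilde K$ lies on the concave side of it. Since metric balls in $\mathbb{H}^2$ are strictly convex, the geodesic joining two nearby points on (or just outside) this circle passes strictly inside the ball, i.e.\ it leaves $\widetilde K$. Hence your claims that a small half-disk neighbourhood of such an $\widetilde x$ lies in $\widetilde K$ and that nearby points of $\widetilde K$ are joined by a short geodesic inside $\widetilde K$ fail at these boundary points, and the ``convex wedge'' picture at the corners rests on the same incorrect description of the boundary; Lemma~\ref{lem:cone_nbhd} does not repair this, because it controls geodesics between points at distance $d$ with $d>r$ from the cone-point, not between points of $\partial U_d$ itself.

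The conclusion is still reachable with a small fix: for nearby points of $\widetilde K$ close to the circular boundary, replace the offending chord by a path running along (or pushed out to) the boundary circle; its length exceeds the ambient distance by a factor tending to $1$ as the points approach each other, so the identity is locally bi-Lipschitz, hence a homeomorphism. Alternatively, argue as the paper does: show that for every $\widetilde x\in\widetilde K$ and $\epsilon>0$ there is $s'>0$ with $B^{\widetilde{S\setminus P}}_{s'}(\widetilde x)\cap\widetilde K\subset B^{\widetilde K}_{\epsilon}(\widetilde x)$, using small normal balls about $\widetilde x$, and conclude continuity of the inverse identity on sequences. In either version, the point your convexity claim was meant to supply -- how a point of $\widetilde K$ that is ambient-close to $\widetilde x$ is joined to it by a short path \emph{inside} $\widetilde K$ -- must be addressed explicitly, with the circular (not geodesic) boundary in mind.
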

\begin{proof}
	First note that, for any $\epsilon>0$, $B_{\epsilon}^{\widetilde{K}}(\widetilde{x}) \subset B_{\epsilon}^{\widetilde{S \setminus P}}(\widetilde{x})$. Hence, the identity map from $\left(\widetilde{K}, d_{\widetilde{K}}\right) \rightarrow \left(\widetilde{K}, d_{\widetilde{S \setminus P}}\right)$ is a continuous map. Now, define $s \coloneqq d _{\widetilde{S \setminus P}}\left(\widetilde{x}, \partial B_{\epsilon}^{\widetilde{K}}(\widetilde{x})\right)$. Then, there exists $s' \ll s$ such that $B_{s'}^{\widetilde{S \setminus P}}(\widetilde{x}) \cap \widetilde{K} \subset B_{\epsilon}^{\widetilde{K}}(\widetilde{x})$. This is because for small enough $s^\prime$, $B_{s'}^{\widetilde{S \setminus P}}(\widetilde{x}) = \text{exp}_{\widetilde{x}} (B_s^\prime(0))$, where $\text{exp}_{\widetilde{x}}$ is the Riemannian exponential map of $\widetilde{S \setminus P}$ and $B_{s^\prime}(0) \subset T_{\widetilde{x}}\left( \widetilde{S \setminus P} \right)$.
 
	Let $\delta_k, \epsilon_k \rightarrow 0$ such that $B_{\delta_k}^{\widetilde{S \setminus P}}(\widetilde{x}) \subset B_{\epsilon_k}^{\widetilde{K}}(\widetilde{x})$. Now, suppose $\widetilde{x_n} \rightarrow \widetilde{x}$ in $\left(\widetilde{K}, d_{\widetilde{S \setminus P}}\right)$, then for large $N$, $\widetilde{x_n} \in B_{\delta_k}^{\widetilde{S \setminus P}}(\widetilde{x}) \subset B_{\epsilon_k}^{\widetilde{K}}(\widetilde{x})$ for all $n \geq N$. Therefore, $\widetilde{x_n} \rightarrow \widetilde{x}$ in $\left(\widetilde{K}, d_{\widetilde{K}}\right)$. This implies that the identity map $\left(\widetilde{K}, d_{\widetilde{S \setminus P}}\right) \rightarrow \left(\widetilde{K}, d_{\widetilde{K}} \right)$ is a continuous inverse of the continuous identity map. Hence, it is a homeomorphism.
\end{proof}

\begin{lemma}
	$\left(\widetilde{K}, d_{\widetilde{K}}\right)$ is a proper geodesic metric space.
\end{lemma}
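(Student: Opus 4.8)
The plan is to deduce the statement from the Hopf--Rinow theorem for length spaces, which asserts that a complete, locally compact length space is proper and geodesic. Thus it suffices to check three things about $\left(\widetilde{K}, d_{\widetilde{K}}\right)$: that it is a \emph{length space}, that it is \emph{complete}, and that it is \emph{locally compact}. The first two previous propositions were set up precisely to supply the last two, via comparison with $d_{\widetilde{S\setminus P}}$, while the length-space property is essentially built into the definition of $d_{\widetilde{K}}$.

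For the length-space property, recall that $d_{\widetilde{K}}$ is by construction the infimum of lengths of paths lying inside $\widetilde{K}$, i.e.\ the intrinsic (induced length) metric on the subset $\widetilde{K}\subset \widetilde{S\setminus P}$; and it is a standard fact that the intrinsic metric induced on any subset of a metric space with a length structure is again a length metric (the length of a path in $\widetilde{K}$ computed with $d_{\widetilde{K}}$ agrees with the one computed with $d_{\widetilde{S\setminus P}}$, by additivity of length and the inequality $d_{\widetilde{S\setminus P}}\le d_{\widetilde{K}}$). The only point needing care is that $d_{\widetilde{K}}$ is finite, i.e.\ that $\widetilde{K}$ is path-connected by rectifiable paths. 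This is clear from the explicit description of $\widetilde{K}$ as $\bigl(\mathcal{P}\setminus\bigcup_i\mathcal{S}_i\bigr)\times\pi_1(S_{g,n})/\!\sim$: each tile $\mathcal{P}\setminus\bigcup_i\mathcal{S}_i$ is a compact hyperbolic polygon with piecewise-geodesic boundary, hence joined to itself by rectifiable paths, adjacent tiles share a full boundary side, and $\widetilde{K}$ is connected (it is $\pi^{-1}(K)$ for the universal cover $\pi$, and $K$ is a deformation retract of $S\setminus P$). So any two points of $\widetilde{K}$ are joined by a finite concatenation of rectifiable paths in $\widetilde{K}$.

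For completeness, I would take a $d_{\widetilde{K}}$-Cauchy sequence $(\widetilde{x}_n)$; since $d_{\widetilde{S\setminus P}}\le d_{\widetilde{K}}$ it is $d_{\widetilde{S\setminus P}}$-Cauchy, hence converges to some $\widetilde{x}\in\widetilde{K}$ in the metric $d_{\widetilde{S\setminus P}}$ by Cauchy-completeness of $\left(\widetilde{K}, d_{\widetilde{S\setminus P}}\right)$; and because the identity $\left(\widetilde{K}, d_{\widetilde{S\setminus P}}\right)\to\left(\widetilde{K}, d_{\widetilde{K}}\right)$ is continuous (it is the continuous inverse from the homeomorphism proposition), $\widetilde{x}_n\to\widetilde{x}$ also for $d_{\widetilde{K}}$, so a Cauchy sequence with a limit converges. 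Local compactness of $\left(\widetilde{K}, d_{\widetilde{K}}\right)$ is immediate from the homeomorphism proposition, since $\left(\widetilde{K}, d_{\widetilde{S\setminus P}}\right)$ is locally compact and the two metrics induce the same topology. Hopf--Rinow then gives that $\left(\widetilde{K}, d_{\widetilde{K}}\right)$ is proper and geodesic. There is nothing deep in this argument; the \emph{main obstacle} is purely bookkeeping between the two metrics $d_{\widetilde{K}}$ and $d_{\widetilde{S\setminus P}}$ — in particular making sure that lengths of paths are measured consistently in the length-space verification, and that finiteness of $d_{\widetilde{K}}$ is actually justified from the polygon model.
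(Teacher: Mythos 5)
Your argument is correct and is essentially the paper's proof: both deduce properness and the geodesic property from the Hopf--Rinow theorem for length spaces (\cite[Corollary 3.8]{BH}), verifying completeness by passing a $d_{\widetilde{K}}$-Cauchy sequence to $d_{\widetilde{S\setminus P}}$ via $d_{\widetilde{S\setminus P}}\le d_{\widetilde{K}}$ and pulling the limit back with the identity homeomorphism, and obtaining local compactness from that same homeomorphism. Your explicit check that $d_{\widetilde{K}}$ is a finite length metric is just a more careful spelling-out of what the paper leaves implicit in the definition of the induced path metric.
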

\begin{proof}
	Firstly observe that $\left(\widetilde{K}, d_{\widetilde{K}}\right)$ is locally compact, as it is homeomorphic to $(\widetilde{K}, d_{\widetilde{S \setminus P}})$. Now, suppose $(\widetilde{x}_n)$ is a Cauchy sequence in $\left(\widetilde{K}, d_{\widetilde{K}}\right)$, then $\left(\widetilde{x}_n\right)$ is also Cauchy in $(\widetilde{K}, d_{\widetilde{S \setminus P}})$. From completeness, we get that there exists an $\widetilde{x} \in \widetilde{K}$ such that $\widetilde{x}_n \rightarrow \widetilde{x}$ in $\left(\widetilde{K}, d_{\widetilde{S \setminus P}}\right)$. Now, as identity map is a homeomorphism, we get that $\widetilde{x}_n \rightarrow \widetilde{x}$ in $\left(\widetilde{K}, d_{\widetilde{K}}\right)$. Hence, $(\widetilde{K},d_{\widetilde{K}})$ is locally compact and complete, thus it is proper (\cite[Corollary 3.8]{BH}). 	
\end{proof}

\begin{lemma}\label{lem:QuasiIsometry}
	$\mathcal{C}(\pi_1(S \setminus P))$ is quasi-isometric to $\left(\widetilde{K},d_{\widetilde{K}}\right)$.
\end{lemma}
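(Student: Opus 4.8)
The plan is to exhibit an explicit quasi-isometry between the Cayley graph $\mathcal{C}(\pi_1(S\setminus P))$ and the space $(\widetilde K, d_{\widetilde K})$, using the Milnor--\v Svarc lemma. First I would recall that $\pi_1(S\setminus P)$ acts on $(\widetilde K, d_{\widetilde K})$ by deck transformations, and that this action is by isometries for the path metric $d_{\widetilde K}$ since each deck transformation preserves $\widetilde K$ and maps paths in $\widetilde K$ to paths in $\widetilde K$ of the same length. The previous lemma establishes that $(\widetilde K, d_{\widetilde K})$ is a proper geodesic metric space, which is exactly the hypothesis needed.

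The key steps, in order, would be: (1) verify the action of $\pi_1(S\setminus P) = \pi_1(S_{g,n})$ on $\widetilde K$ is properly discontinuous and cocompact — cocompactness is immediate because $\widetilde K / \pi_1(S_{g,n}) = K$ is compact (it is the truncated polygon $\mathcal{P}\setminus\bigcup\mathcal{S}_i$ with sides identified), and proper discontinuity follows from the fact that this is a covering space action of a group acting freely with a compact quotient; (2) check the action is by isometries of $d_{\widetilde K}$, as noted above; (3) invoke the Milnor--\v Svarc lemma (e.g. \cite[Proposition I.8.19]{BH}) to conclude that $\pi_1(S_{g,n})$, equipped with the word metric from any finite generating set, is quasi-isometric to $(\widetilde K, d_{\widetilde K})$; (4) finally, observe that the Cayley graph $\mathcal{C}(\pi_1(S\setminus P))$ with respect to the standard generating set $\mathcal{A}$ is quasi-isometric (indeed with its vertex set isometric, up to the usual additive constant from edges) to $\pi_1(S_{g,n})$ with the word metric, so composing gives the desired quasi-isometry.

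I do not anticipate a serious obstacle here; the main point requiring a little care is step (2), confirming that deck transformations act by isometries with respect to the \emph{path} metric $d_{\widetilde K}$ rather than merely the ambient restricted metric $d_{\widetilde S\setminus P}$ — but this is clear, since a deck transformation $g$ is an isometry of $\widetilde{S\setminus P}$ preserving $\widetilde K$, hence it sends a path in $\widetilde K$ from $\widetilde x$ to $\widetilde y$ bijectively to a path in $\widetilde K$ from $g\widetilde x$ to $g\widetilde y$ of equal length, so infima of such lengths are preserved. A secondary bookkeeping point is to make sure the Milnor--\v Svarc hypotheses (properness and geodesicity of the space) are all supplied by the preceding lemma, which they are. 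With these in place the lemma follows formally.
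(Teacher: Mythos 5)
Your proposal is correct and follows essentially the same route as the paper: the paper also verifies that $\pi_1(S\setminus P)$ acts on $\bigl(\widetilde{K},d_{\widetilde{K}}\bigr)$ by isometries (path lengths are preserved by deck transformations), properly discontinuously, and cocompactly, and then concludes by the Milnor--\v{S}varc lemma, the properness and geodesicity of $\bigl(\widetilde{K},d_{\widetilde{K}}\bigr)$ having been supplied by the preceding lemma. Your write-up is simply a more explicit version of the same argument, including the standard identification of the group with its Cayley graph.
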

\begin{proof}
	$\mathcal{C}(\pi_1(S \setminus P))$ acts on $\left(\widetilde{K}, d_{\widetilde{K}}\right)$ via isometries, since lengths of curves are preserved under both induced path metrics and isometries. The action is also properly discontinuous, since it is the action by deck transformations. In summary, $\mathcal{C}(\pi_1(S \setminus P))$ acts properly and co-compactly on $\left(\widetilde{K}, d_{\widetilde{K}}\right)$ and thus $\mathcal{C}(\pi_1(S \setminus P))$ is quasi-isometric to $\left(\widetilde{K}, d_{\widetilde{K}}\right)$.
\end{proof}

\begin{remark}
    Note that Lemma~\ref{lem:QuasiIsometry} is not true for $\left(\widetilde{K},d_{\widetilde{S \setminus P}}\right)$. This is because $\left(\widetilde{K},d_{\widetilde{S \setminus P}}\right)$ is not a geodesic metric space as the geodesic between two points sufficently near the boundary would lie outside $\widetilde{K}$. This is resolved by taking the induced metric because we consider only paths lying completely inside $\widetilde{K}$.
\end{remark}

 \begin{proof}[Proof of Theorem 1.2]
    Let $\widetilde{x} \in \widetilde{S \setminus P}$ and $w$ be a subword of an infinite reduced word representing a lift of the simple closed curve, say $\gamma$ in $\mathcal{C}(\pi_1(S_{g,n}))$. We prove that there exists a geodesic joining $\widetilde{x}$ and $w \cdot \widetilde{x}$ in $\widetilde{S \setminus P}$. Assuming the contrary would imply that the minimum distance between $\widetilde{x}$ and $w \cdot \widetilde{x}$ is realised by a curve that passes through a lift of a cone-point. There exists a subarc of this distance realising curve lying completely inside the neighbourhood of the lift of the cone-point. Let the homotopy class of the image of this subarc in the cone-neighbourhood on the surface be $\mathcal{H}_k$. As this geodesic arc passes through the cone-point,  by Proposition~\ref{thm:conearcs}, we see that $k \geq \lfloor \dfrac{\pi}{\theta} \rfloor$, where $\theta$ is the cone-angle. But, since $\theta < \pi$, we get that $k > 1$. From the discussion regarding arcs on hyperbolic cones in Section~\ref{sec:HyperbolicConeSurfaces}, any lift of the homotopy class $\mathcal{H}_k$ must pass through $k+1$ sectors in the lift of the cone. Hence, the subarc under consideration must pass through at least three sectors in the lift of the cone. This implies that the image of $w$ on the surface   goes around the cone-point at least twice, which in turn implies that $w$ contains $c^2$ as a subword, where $c$ is the generator corresponding to the peripheral curve around the cone-point. This would imply that the cyclically reduced word representing the simple closed curve has $c^2$ as a subword. This means that the simple closed curve $\gamma$ goes around the cone-point twice, which is impossible by Proposition~\ref{prop:selfintersection}. Hence, there exists a geodesic joining $\widetilde{x}$ and $w \cdot \widetilde{x}$, for every subword of an infinite reduced word representing a lift of a simple closed curve.

    Fix an $\widetilde{x} \in \widetilde{S \setminus P}$, we find a subspace $\mathcal{K}$ of $\widetilde{S \setminus P}$, such that $\mathcal{K}$ contains $\widetilde{K}$ and is $\pi_1(S_{g,n})$-invariant and $\pi(\mathcal{K})$ is compact. Also, $\mathcal{K}$ has the property that the geodesic joining $\widetilde{x}$ and $w \cdot \widetilde{x}$ lies completely inside $\mathcal{K}$, for every subword $w$ of a lift of simple closed curve. Such a subspace is guaranteed by Lemma~\ref{lem:cone_nbhd} as we now explain. As $\widetilde{x}$ lies a definite distance away from the cone-points, any geodesic starting at $\widetilde{x}$ enters any cone neighbourhood from a distance $d_{\widetilde{x}} > 0$. From Lemma~\ref{lem:cone_nbhd}, there exists a neighbourhood $U_{d_{\widetilde{x}}}$ of the cone-point that the geodesic does not enter. Note that these neighbourhoods depend only on $\widetilde{x}$ and they shrink as $\widetilde{x}$ goes closer to the cone-point.
    
    Now, by Lemma~\ref{lem:QuasiIsometry}, $(\mathcal{K},d_{\mathcal{K}})$ is a proper geodesic space and it is quasi-isometric to $\mathcal{C}(\pi_1(S \setminus P))$. Therefore, there exists $C \geq 1, \epsilon > 0$ such that for every $w \in \pi_1(S_{g,n})$
    \[ \frac{1}{C} \Vert w \Vert - \epsilon \leq d_{\mathcal{K}}(\widetilde{x}, w \cdot \widetilde{x}) \leq C \Vert w \Vert + \epsilon\]

    Suppose $w$ is a subword of an infinite reduced word representing a lift of a simple closed curve. There exists a geodesic in $\mathcal{K}$ which is also a geodesic in $\widetilde{S \setminus P}$ joining $\widetilde{x}$ and $w \cdot \widetilde{x}$. Now the image of this geodesic under the developing map $\text{dev}: \widetilde{S \setminus P} \rightarrow \mathbb{H}^2$ is again a geodesic, since $\text{dev}$ is a local isometry. Now assuming $\text{dev}(\widetilde{x}) = O \in \mathbb{H}^2$, this implies that 
    \[d_{\mathcal{K}}(\widetilde{x}, w \cdot \widetilde{x}) = d_{\mathbb{H}^2}(O, \rho(w) \cdot O)\]

    Hence, for every cyclically reduced subword $w$ of a infinite reduced word representing a lift of a simple closed curve,
    \[ \frac{1}{C} \Vert w \Vert - \epsilon \leq d_{\mathbb{H}^2}(O, \rho(w) \cdot O) \leq C \Vert w \Vert + \epsilon\]

    Hence, by Remark~\ref{rem:simpstabaltdef}, this implies that, $\rho$ is simple-stable.

    The proof of the second part proceeds exactly similar to the previous one. The only difference is that we need to find $\mathcal{K} \subset \widetilde{S \setminus P}$ such that all geodesics joining $\widetilde{x}$ and $w \cdot \widetilde{x}$ is contained in $\mathcal{K}$, for any subword $w$ of a cyclically reduced primitive word. There exists such a $\mathcal{K}$, because \cite[Lemma 4.4]{Minsky} implies that $c^2$, where $c=\prod_{i=1}^g [a_i, b_i]$, cannot be a subword of a primitive word, i.e., $c^2$ is blocking.  As in the proof above, if there is no geodesic joining $\widetilde{x}$ and $w \cdot \widetilde{x}$, then the shortest curve passes through the cone-point implying that the curve $w$ goes around the cone-point at least twice implying that $c^2$ is a subword of $w$, which is a contradiction. The rest of the proof proceeds exactly as above. Hence, $\rho$ is primitive-stable.  
\end{proof}

\begin{remark} \label{rem:MainTheorems}From the theorem above, we make following observations:
    \begin{enumerate}
        \item The holonomy representation of a cone surface with cone-points is indiscrete, whenever one of the cone-angles is an irrational multiple of $\pi$. Hence, Theorem~\ref{thm:main1} gives examples of indiscrete simple-stable and indiscrete primitive-stable representations.
        \item In hyperbolic cone surfaces with more than one cone-point, the simple closed curves around the cone-point are primitive and under the holonomy they map to elliptic elements. Hence, they cannot be primitive-stable. These holonomies form the examples of simple-stable representations that are not primitive-stable. 
    \end{enumerate}    
\end{remark}

\subsection{Strong Simple Stability}\label{StrongSimpleStability}
One can define a slightly stronger notion of simple-stability by dropping the requirement of simple closed curves to be non-separating. Call this \textit{strong simple-stability}. Although every simple closed curve in a punctured sphere is separating, we can talk about strongly simple-stable representations of $\pi_1(S_{0,n})$. The following fact is straightforward from the proof of Theorem~\ref{thm:main1}, as the argument there depended only the simplicity of the closed curves:

\begin{proposition} \label{prop:StrongSimpleStabilityAdmissibleSurface}
    The holonomy of an admissible cone surface is strongly simple-stable.
\end{proposition}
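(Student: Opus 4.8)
The plan is to observe that the proof of Theorem~\ref{thm:main1} never used the hypothesis that the simple closed curve is non-separating, and then to pinpoint the single place where simplicity itself actually enters. Recall the strategy there: given a basepoint $\widetilde{x} \in \widetilde{S \setminus P}$ and a subword $w$ of an infinite reduced word spelling a lift of a simple closed curve $\gamma$, one first shows that $\widetilde{x}$ and $w \cdot \widetilde{x}$ are joined by a genuine geodesic of $\widetilde{S \setminus P}$ (equivalently, the length-minimising path between them avoids every lift of a cone-point). One then traps all such geodesics inside a $\pi_1(S_{g,n})$-invariant set $\mathcal{K} \supset \widetilde{K}$ with $\pi(\mathcal{K})$ compact (via Lemma~\ref{lem:cone_nbhd}), invokes the quasi-isometry $\mathcal{C}(\pi_1(S \setminus P)) \simeq (\mathcal{K}, d_{\mathcal{K}})$ of Lemma~\ref{lem:QuasiIsometry} to extract uniform constants $C \geq 1$, $\epsilon > 0$, and finally pushes the estimate forward by the developing map — a local isometry — into $\mathbb{H}^2$. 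An appeal to Remark~\ref{rem:simpstabaltdef} then yields simple-stability. So the proof I would give is: rerun this argument, checking that each step is insensitive to the topological type of $\gamma$.

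The only step sensitive to that type is the cone-point-avoidance step, so that is where I would concentrate. Suppose the minimising path from $\widetilde{x}$ to $w \cdot \widetilde{x}$ passed through a lift of a cone-point of angle $\theta < \pi$. Then some subarc of it lies in a lift of a cone-neighbourhood in a homotopy class $\mathcal{H}_k$ running through the cone-point, so Proposition~\ref{thm:conearcs} forces $k \geq \lfloor \pi/\theta \rfloor \geq 2$; such an arc threads at least three sectors of the lifted cone, which makes $w$ contain $c^2$ as a subword, where $c$ is the generator of the peripheral curve around that cone-point, and hence $\gamma$ goes around the puncture at least twice. The crux is that Proposition~\ref{prop:selfintersection} is stated and proved for \emph{every} closed curve on a punctured surface, with no separating/non-separating restriction: a simple closed curve cannot wrap around a puncture more than once whether or not it separates. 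So the contradiction goes through verbatim for separating simple closed curves, and the geodesic joining $\widetilde{x}$ to $w \cdot \widetilde{x}$ exists in this case too.

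With that step secured, every subsequent step of the proof of Theorem~\ref{thm:main1} is unchanged — the construction of $\mathcal{K}$, the quasi-isometry of Lemma~\ref{lem:QuasiIsometry}, and the developing-map transport all ignore whether $\gamma$ separates — so I would conclude that there are uniform $C \geq 1$, $\epsilon > 0$ with $\frac{1}{C}\Vert w \Vert - \epsilon \leq d_{\mathbb{H}^2}(O, \rho(w) \cdot O) \leq C\Vert w \Vert + \epsilon$ for every subword $w$ of a lift of an arbitrary simple closed curve, which is exactly strong simple-stability by Remark~\ref{rem:simpstabaltdef}. I do not expect a real obstacle; the one point worth double-checking is that the polygon/universal-cover apparatus of Section~\ref{sec:HyperbolicConeSurfaces} (Proposition~\ref{prop:admissiblePolygon}) is available for the surface in question, which it is in positive genus — the genuinely different punctured-sphere case being handled separately by Theorem~\ref{thm:PuncturedSpheresStrongSimpleStability}.
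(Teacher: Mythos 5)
Your proposal is correct and matches the paper's approach exactly: the paper proves this proposition simply by observing that the argument for Theorem~\ref{thm:main1} used only the simplicity of the closed curves (via Proposition~\ref{prop:selfintersection}), never the non-separating hypothesis. Your write-up just spells out in more detail the one step — cone-point avoidance — where that observation is needed.
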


Now, note that the fundamental group of a punctured sphere $S_{0,n}$ is given by:
\[\pi_1(S_{0,n}) \coloneqq \langle c_1, \ldots, c_n \vert c_1 \cdots c_n \rangle\]

We have an equivalent version of Proposition~\ref{prop:admissiblePolygon} for admissible punctured spheres, i.e., hyperbolic cone spheres with cone-angles less than or equal to $\pi$.

\begin{proposition}\label{prop:PolygonForConeSpheres}
    Any admissible hyperbolic cone sphere $S$ is obtained by identifying sides of a convex hyperbolic polygon by isometries.
\end{proposition}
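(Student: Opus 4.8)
The plan is to run the argument of Proposition~\ref{prop:admissiblePolygon} in the degenerate case of genus $0$, where the construction of simple closed lassos is vacuous and only the geodesic arcs joining the cone-points are needed. Label the cone-points $p_1,\dots,p_n$; since an admissible hyperbolic cone sphere has positive area, Gauss--Bonnet ($\mathrm{Area}=2\pi(n-2)-\sum_i\theta_i>0$) forces $n\ge 3$. First I would fix $n-1$ simple arcs whose union with $P=\{p_1,\dots,p_n\}$ is an embedded spanning tree $T$ of $P$ --- for definiteness the star of arcs $l_2,\dots,l_n$ running from $p_1$ to $p_2,\dots,p_n$, pairwise disjoint away from $p_1$, leaving $p_1$ in distinct directions, and chosen not to wind around any cone-point. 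As in Theorem~\ref{thm:scgeodesic_existence}, each $l_i$ has a length-minimising geodesic representative rel endpoints by an Arzela--Ascoli argument, and by the analysis of arcs on hyperbolic cones in Section~\ref{sec:HyperbolicConeSurfaces} --- using $\theta_j<\pi$ --- this representative meets no cone-point in its interior and stays in its homotopy class. Lifting to the universal cover of Remark~\ref{rem:UniversalCover} and pushing forward by the developing map, any transverse self-intersection or mutual intersection of these geodesic arcs would produce a geodesic bigon in $\mathbb{H}^2$, which is impossible; hence the $l_i$ remain simple and pairwise disjoint away from $p_1$. This is the cone-point analogue of the fact, imported via \cite[Lemma 3.6]{DESPRE} in Proposition~\ref{prop:admissiblePolygon}, that geodesic representatives of disjoint simple arcs stay disjoint.

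Cutting $S$ along $T$ then produces a disk $\mathcal{P}$ whose boundary is a geodesic $2(n-1)$-gon, each side a copy of some $l_i$ and each vertex an image of some $p_i$; this matches $\pi_1(S_{0,n})\cong F_{n-1}$, the $n-1$ side-pairing isometries giving the generators. It remains to verify that $\mathcal{P}$ is convex, i.e.\ that all of its interior angles are $<\pi$. At each of the $n-1$ vertices arising from $p_1$, the interior angle is one of the angular sectors into which $l_2,\dots,l_n$ subdivide the cone-angle at $p_1$, hence is $<\theta_1<\pi$; at the vertex arising from $p_i$ with $i\ge 2$, cutting along the single arc $l_i$ unrolls the cone of angle $\theta_i$ into a sector, so the interior angle there equals $\theta_i<\pi$. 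A simple geodesic polygon in $\mathbb{H}^2$ with every interior angle $<\pi$ bounds a convex region, so $\mathcal{P}$ is convex, and $S$ is recovered from $\mathcal{P}$ by identifying its $n-1$ pairs of sides via the corresponding holonomy isometries.

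I expect the main obstacle to be, as in Proposition~\ref{prop:admissiblePolygon}, the disjointness and simplicity of the geodesic arc representatives: the cited \cite[Lemma 3.6]{DESPRE} concerns arcs with endpoints on a geodesic boundary rather than at cone-points, so the adapted bigon-in-the-developing-image argument sketched above must be supplied in detail (together with the elementary check that a cut along a tree meeting $p_1$ in distinct directions yields a genuine disk). Everything else --- the angle bookkeeping and the passage from a geodesic polygon with interior angles $<\pi$ to convexity --- is routine.
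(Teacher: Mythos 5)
Your overall strategy (cut along a geodesic star based at a cone-point) is viable and genuinely different from the paper's, but the step you yourself flag as the main obstacle is a real gap, and the argument you sketch for it does not work as stated. You propose to rule out self-intersections and mutual intersections of the tightened arcs $l_2,\dots,l_n$ by saying a transverse intersection ``would produce a geodesic bigon in $\mathbb{H}^2$'' via the developing map. On a hyperbolic cone surface, however, two geodesics can bound a bigon whose interior contains a cone-point, and this already happens for cone-angles $\theta<\pi$: two geodesic chords near a single cone-point of angle $\theta<\pi$ can cross twice, winding around it (this is exactly the phenomenon quantified by Proposition~\ref{thm:conearcs}). Such a bigon does not lift to $\widetilde{S\setminus P}$ and its boundary does not develop to a geodesic bigon in $\mathbb{H}^2$, so no contradiction is obtained. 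To salvage your route you must additionally show that any innermost bigon between the tightened arcs contains no cone-point, using the prescribed ``non-winding'' homotopy classes and the intersection-number analysis of Section~\ref{sec:HyperbolicConeSurfaces}; this is precisely the content that is missing (and \cite[Lemma 3.6]{DESPRE} does not cover arcs ending at cone-points). A cleaner repair: drop the prescribed homotopy classes and take each $l_i$ to be a \emph{shortest} arc from $p_1$ to $p_i$. A shortest arc is simple, cannot pass through a cone-point of angle $<\pi$ in its interior (one of the two angles at such a point is $<\pi/2\cdot 2$, i.e.\ $<\pi$, so the corner can be cut), two shortest arcs emanating from $p_1$ cannot cross transversally in their interiors (swap initial segments and shorten the resulting corner, contradicting minimality), and they leave $p_1$ in distinct directions; then your angle bookkeeping at the $p_1$-copies and at the leaves goes through.

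For comparison, the paper avoids the tightening problem altogether: it first joins the cone-points in cyclic order by geodesics $m_1,\dots,m_n$ bounding a convex polygon $\mathcal{P}^\prime$ isometrically embedded in $S$ with vertices at all the cone-points, picks a regular point $x$ in its interior, and cuts $S$ along the star of segments from $x$ to the vertices. Inside an embedded convex polygon these segments are automatically geodesic, simple and pairwise disjoint away from $x$, and the angles at the $x$-copies are angles of genuine triangles, hence $<\pi$; the output is a convex $2n$-gon with side pairings $c_1,\dots,c_n$ satisfying $c_1\cdots c_n=1$, matching the presentation of $\pi_1(S_{0,n})$, whereas your cut yields a $2(n-1)$-gon with $n-1$ pairings. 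One further point, common to both arguments: concluding convexity from ``all interior angles $<\pi$'' presupposes that the cut-open disk develops injectively onto a simple polygon in $\mathbb{H}^2$; the paper leaves this implicit as well, but if you write your version up you should state it (the cut-open disk is simply connected, has no interior cone-points and locally convex boundary, so it is CAT($-1$) and the developing map embeds it onto a convex polygon).
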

\begin{proof}
    Let the admissible cone sphere have $n$ cone-points, say $p_1, \ldots, p_n$. Let $g_k$ be the geodesic joining $p_1$ to $p_k$. Note that $g_i$ and $g_j$ are disjoint everywhere except at $p_1$, whenever $i \neq j$, since there cannot be bigons. Now, at $p_1$, starting at $g_2$ we have a ordering on the geodesics coming out of $p_1$. Without loss of generality, assume that the ordered geodesics are $g_2,g_3, \ldots, g_n$. Now, let $m_k$ be the geodesic joining $p_k$ and $p_{k+1}$ for $1 \leq k \leq n-1$ and $m_n$ be the geodesic joining $p_n$ and $p_1$. Again, $m_i$ and $m_j$ are disjoint, whenever $|i-j| \geq 2 \text{ mod }n$ and in other cases, they intersect at cone-points.

    \begin{figure}[h]
		\labellist
            \tiny
		\pinlabel $p_1$ at 180 215
            \pinlabel $p_5$ at 90 150
            \pinlabel $p_2$ at 260 150
            \pinlabel $p_3$ at 260 320
            \pinlabel $p_4$ at 90  310

            \pinlabel $g_5$ at 140 220
            \pinlabel $g_2$ at 210 220
            \pinlabel $g_3$ at 205 250
            \pinlabel $g_4$ at 140 250

            \pinlabel $m_5$ at 180 180
            \pinlabel $m_2$ at 240 240
            \pinlabel $m_3$ at 170 295
            \pinlabel $m_4$ at 100 240

            \pinlabel $m_1$ at 600 190
            \pinlabel $m_2$ at 635 240
            \pinlabel $m_3$ at 560 295
            \pinlabel $m_4$ at 495 240
            \pinlabel $m_5$ at 545 190

            \pinlabel $p_1$ at 570 215
            \pinlabel $p_5$ at 485 150
            \pinlabel $p_2$ at 650 150
            \pinlabel $p_3$ at 650 325
            \pinlabel $p_4$ at 480 310
            
		\endlabellist
		\centering
		\centering  
		\includegraphics[width=0.7\linewidth]{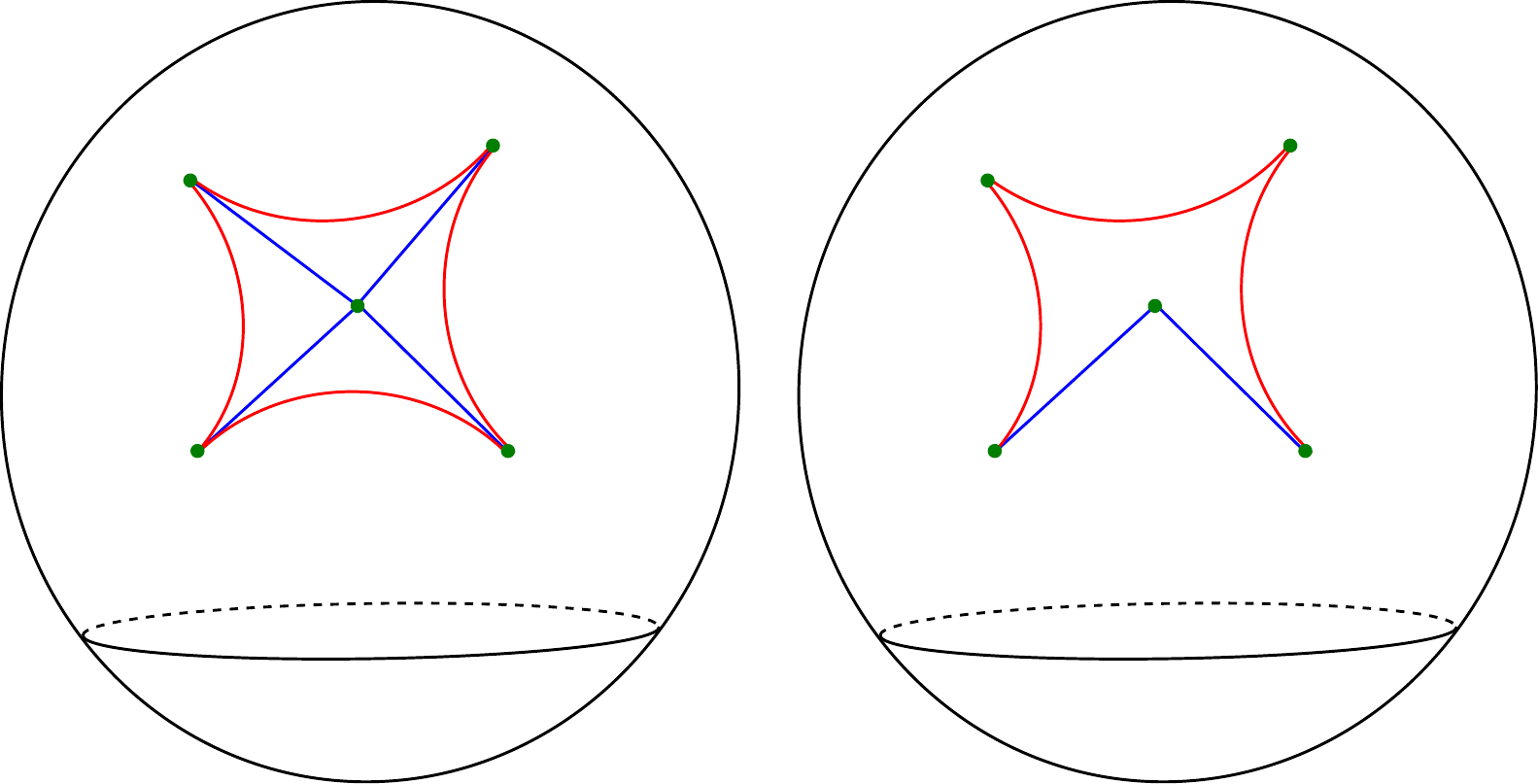}
		\captionof{figure}{Finding an admissible polygon}
		\label{fig:hypconesphere}
	\end{figure}

    The geodesics $m_1, \ldots, m_n$ bound a convex hyperbolic polygon $\mathcal{P}^\prime$ in the surface $S$. The convexity follows from the fact that the cone-angles are less than $\pi$. Let $x$ be a point in the interior of this polygon and $l_1, \ldots, l_n$ be the geodesics joining $x$ to the vertices. As $\mathcal{P}^\prime$ is isometrically embedded in $S$, $l_1, \ldots, l_n$ are also geodesics in $S$. Now, cutting $S$ along $l_1,\ldots,l_n$, we get a convex hyperbolic polygon $\mathcal{P}$ such that identifying the sides of $\mathcal{P}$ via isometries $c_1, \ldots, c_n$ such that $c_1 \cdots c_n = 1$ gives us $S$.
\end{proof}

Now, we prove one of the main results stated in the introduction.
\PuncturedSpheresStrongSimpleStability*    

\begin{proof}
    The proof follows exactly as that of Theorem~\ref{thm:main1} with the only difference being the hyperbolic geodesic polygon used in constructing the universal cover of given admissible punctured cone sphere. Specifically, we construct the universal cover, using the hyperbolic geodesic polygon $\mathcal{P}$ obtained from Proposition~\ref{prop:PolygonForConeSpheres}, as described in Remark~\ref{rem:UniversalCover}. 
\end{proof}

\section{Simple Bowditch Q-Conditions}\label{sec:SBQconditions}
First we recall the definition of Bowditch Q-conditions and then define a slightly modified version for our purpose.
\begin{definition}[BQ-Conditions]\label{def:BQ}
    A representation in $\rho:F_n \rightarrow \pslr$ is said to satisfy \textit{Bowditch Q-conditions} if:
    \begin{enumerate}
        \item $\rho(w)$ is hyperbolic for every primitive word $w \in F_n$.
        \item There are only finitely many $w \in F_2$ such that $\vert \tra \left(\rho(w) \right) \vert \leq 2$. 
    \end{enumerate}
\end{definition}

We do the natural modification to this definition to obtain the following:

\begin{definition}[SBQ-Conditions] \label{def:SBQ}
     A representation in $\rho:\pi_1(S_{g,n}) \rightarrow \pslr$ is said to satisfy \textit{simple Bowditch Q-conditions} if:
    \begin{enumerate}
        \item $\rho(\gamma)$ is hyperbolic for every simple closed curve $\gamma$.
        \item There are only finitely many simple closed curves $\gamma$ such that $\vert \tra \left( \rho(\gamma) \right) \vert \leq 2$. 
    \end{enumerate}
\end{definition}

We call these $BQ$ and $SBQ$-conditions for brevity. It follows from \cite[Proposition 2.9]{LupiThesis} that primitive-stable representations satisfy $BQ$-conditions. The proof of \cite[Proposition 2.9]{LupiThesis} would go through without any modifications to prove that strongly simple-stable representations satisfy $SBQ$-conditions.

In this Section, we give a proof of the $SBQ$-conditions for the holonomy of admissible cone surfaces without the assumption of strong simple-stability.

\begin{definition}
	Let $a,b$ be distinct isotopy classes of closed curves. The intersection number of $a$ and $b$ denoted by $i(a,b)$ is defined as the infimum of the number of intersections over all representatives of $a$ and $b$ i.e.,
	$$i(a,b) = \inf_{\gamma \in a, \sigma \in b} |\gamma \cap \sigma|$$ 
\end{definition}
\begin{definition} \label{def:Filling}
		Let $\mathcal{F} = \{a_1, \ldots, a_n\}$ be a set of distinct isotopy classes of curves on $S$. Then, $\mathcal{F}$ is called a \textit{filling} of $S$, if there exists curves $\widetilde{\mathcal{F}} = \{\gamma_1, \ldots, \gamma_n\}$ in minimal position, where $\gamma_i$ is a representative for the class $a_i$, such that, $S \setminus \widetilde{\mathcal{F}}$ is a disjoint union of disks or once-punctured disks. 
\end{definition} 

Let $\mathcal{F} =  \{a_1, \ldots, a_n\} $ be a filling of $\Sigma$ and $\gamma$ is a closed curve on $S$, then define
$$i(\gamma, \mathcal{F}) = \max_{i = 1,\ldots, n} i(\gamma, a_i) $$

\begin{lemma} \label{lem:FillingLemma}
	Let $\mathcal{F}$ be a filling of $S$, possibly with boundary and punctures, consisting of non-peripheral essential simple closed curves. Define, for fixed $N \in \mathbb{N}$
	$$ \mathcal{I}_N = \{a ~ | ~ i(a,\mathcal{F}) \leq N, a \text{ is an isotopy class of a simple closed curve} \}. $$
	Then, $\mathcal{I}_N$ is a finite set for every $N \in \mathbb{N}$.
\end{lemma}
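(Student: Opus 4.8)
The plan is to turn this into a finite combinatorial count using the cell structure that the filling puts on $S$. Equip $S$ with an auxiliary complete hyperbolic metric (with cusps at the punctures) and take the representatives $\gamma_i$ of the classes in $\mathcal{F}$ appearing in the definition of filling to be geodesics; since $\mathcal{F}$ fills, the graph $G := \bigcup_i \gamma_i$ still has every complementary component a disk or a once-punctured disk, and $G$ is a finite graph (a vertex at each of the finitely many intersection points of the $\gamma_i$, joined by finitely many edges). Now let $a \in \mathcal{I}_N$ and let $\gamma$ be its geodesic representative. Distinct geodesics are automatically in minimal position, so $|\gamma \cap \gamma_i| = i(a,a_i) \le N$ for every $i$; hence, after a small perturbation making $\gamma$ meet $G$ transversally and away from the vertices, $\gamma$ crosses $G$ in at most $N|\mathcal{F}|$ points and is thereby cut into at most $N|\mathcal{F}|$ embedded arcs, each a properly embedded arc inside one face of $S \setminus G$ with its two endpoints on edges of $G$.

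To $\gamma$ I associate the following combinatorial data: (i) for each face $R$ of $S \setminus G$, the collection of arcs of $\gamma$ lying in $R$, recorded up to isotopy of $R$ fixing each edge of $\partial R$ setwise; and (ii) the cyclic word describing the order in which $\gamma$ runs through faces and crosses edges, together with how the arc-endpoints on each edge are paired up as one traverses $\gamma$. There are only finitely many such data of total size at most $N|\mathcal{F}|$: there are finitely many faces and edges of $G$; inside a disk, a family of disjoint properly embedded arcs with endpoints on prescribed edges is, up to the allowed isotopy, a non-crossing system of chords, of which there are finitely many of bounded cardinality; inside a once-punctured disk each arc carries in addition only a binary label recording on which side the puncture lies (an embedded arc cannot wind around the puncture repeatedly), so again finitely many; and the number of endpoint-pairings in (ii) is at most $(N|\mathcal{F}|)!$.

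The crux is that this data determines the isotopy class $a$. Given two simple closed curves $\gamma, \gamma'$ carrying the same data, one builds an ambient isotopy of $S$ in stages: first slide the points $\gamma \cap G$ along their edges to coincide with $\gamma' \cap G$; then, face by face, use that each face is a disk or a once-punctured disk to isotope the arc system of $\gamma$ onto that of $\gamma'$ rel the boundary edges; by construction these local isotopies agree on the shared edges, so they patch to a global isotopy carrying $\gamma$ to $\gamma'$. Since there are only finitely many possible data, and the class $a$ is a function of the data, $\mathcal{I}_N$ is finite. The degenerate case $i(a,\mathcal{F})=0$ is immediate: then $\gamma$ lies in a single face, so $a$ is trivial or peripheral, of which there are only finitely many.

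I expect the step requiring the most care is the verification that the data pins down the isotopy class — this is the standard ``normal form for a curve with respect to a cell decomposition'' argument (the combinatorial picture underlying train tracks), the only subtlety being compatibility of the stage-by-stage isotopies across shared edges and the bookkeeping for the once-punctured faces; one should also invoke the standard facts that a filling collection can be realized with disk and once-punctured-disk complement and that geodesic representatives are simultaneously in minimal position. A more black-box alternative would be to cite that intersection with a filling system defines a proper function on the set of isotopy classes of simple closed curves, which is essentially the assertion of the lemma; I prefer the elementary argument above for self-containedness.
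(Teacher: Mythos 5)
Your argument is correct in substance, but it follows a genuinely different route from the paper. The paper's proof is geometric: after realizing $\mathcal{F}$ by geodesics, a class $a$ with $i(a,\mathcal{F})\le N$ is cut by the complementary polygons into a bounded number of arcs, each of bounded length, so the geodesic representative of $a$ has length bounded in terms of $N$; finiteness then follows from the discreteness of the simple length spectrum of the (auxiliary) hyperbolic surface. You instead run the purely combinatorial ``normal form with respect to the cell decomposition induced by $G=\bigcup_i\gamma_i$'' argument: bounded intersection gives at most $N|\mathcal{F}|$ arcs distributed among finitely many faces, each face being a disk or once-punctured disk admits only finitely many arc systems of bounded size up to the allowed isotopy, and the combinatorial data pins down the isotopy class by a patching isotopy. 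What your approach buys is self-containedness and generality (no appeal to length-spectrum discreteness, and it works verbatim for any metric-free filling cell structure); what the paper's approach buys is brevity, since the heavy lifting is delegated to a standard fact about hyperbolic surfaces. One bookkeeping point in your write-up: to make the patching step airtight you should also record (or argue is determined) the linear order of the intersection points of $\gamma$ along each edge of $G$, since the face-by-face isotopies are glued along edges; this is additional but still finite data, so it does not affect the finiteness conclusion, and it is exactly the kind of care you already flag. Similarly, if $S$ has boundary you should allow boundary-parallel annular faces alongside disks and once-punctured disks, which are handled the same way as the punctured case.
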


\begin{proof}
	
	Give $S$ a complete hyperbolic structure and we denote the hyperbolic surface thus obtained also by $S$. If $S$ has cusps, then there are neigbourhoods around cusps such that every simple closed geodesic is disjoint from these neighbourhoods.

	Now, consider the geodesic representatives of curves in $\mathcal{F}$. Removing cusp neighbourhoods if any, we can assume that the surface is compact, possibly with non-geodesic boundary. 
	
	Now, $S \setminus \mathcal{F}$ is disjoint union of geodesic polygons or a geodesic polygons with an open disk removed. Let $a \in \mathcal{I}_N$, then the number of arcs corresponding to $a$ in each of these polygons is bounded above. This implies that the length of $a$ is also bounded above. We know that there are only finitely many simple closed curves with lengths bounded above by any given positive real number i.e., simple length spectrum of hyperbolic surface is discrete. Hence, $\mathcal{I}_N$ is a finite set. 	
\end{proof}

\begin{definition} \label{def:AdmissibleFilling}
	A filling $\mathcal{F}$ of a hyperbolic cone surface of genus greater than or equal to 1 is said to be \textit{admissible} if it consists only of non cone-point peripheral, essential simple closed geodesics.
\end{definition}

\begin{definition} \label{def:partition}
	A \textit{partition} of a cone-surface $S$ is a set of simple closed geodesics $\mathcal{P} = \{\gamma_1, \ldots, \gamma_n\}$ such that  $ S \setminus \mathcal{P}$ is a union of hyperbolic pair of pants with three geodesic boundaries and hyperbolic annuli with one cone-point.
\end{definition}

\begin{lemma} \label{lem:AdmissibleFillingExistence}
	Every admissible hyperbolic cone surface of genus greater than or equal to one has an admissible filling.
\end{lemma}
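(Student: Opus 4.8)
The plan is to build the filling combinatorially on the underlying topological surface and then replace each curve by its geodesic representative. Throughout I regard the cone-points $p_{1},\dots ,p_{n}$ as punctures of $S$, so that a simple closed curve is \emph{essential} precisely when it is non-trivial and non-peripheral, and \emph{cone-point peripheral} when it bounds a disk containing at least one $p_{j}$. The key observation I would use is that a non-separating simple closed curve is automatically essential and not cone-point peripheral, since curves of the latter two kinds are separating.

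\emph{Step 1: a topological filling by non-separating curves.} Since $g\ge 1$, I would first fix a chain $\gamma_{1},\dots ,\gamma_{2g}$ of simple closed curves on $S$ — that is, $\gamma_{i}$ meets $\gamma_{i+1}$ transversely in one point and is disjoint from $\gamma_{j}$ for $|i-j|\ge 2$ — chosen to avoid $P$. A chain of length $2g$ fills the underlying closed genus-$g$ surface, so the complement of $\gamma_{1}\cup\cdots\cup\gamma_{2g}$ in $S\setminus P$ is an $n$-times-punctured disk, the removed points being $p_{1},\dots ,p_{n}$. To cut this into pieces each carrying one puncture, I would add for $j=1,\dots ,n-1$ a curve $\delta_{j}$ obtained by band-summing a parallel pushoff of $\gamma_{1}$ with a small loop around $p_{j}$, choosing the pushoffs and bands pairwise disjoint, disjoint from $\gamma_{2},\dots ,\gamma_{2g}$, and disjoint from $p_{k}$ for $k\ne j$. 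Then $\delta_{j}$ meets $\gamma_{2}$ in exactly one point and is disjoint from all other $\gamma_{i}$ and from all $\delta_{k}$ with $k\ne j$; having odd mod-$2$ intersection with $\gamma_{2}$ it is non-separating; and since its homology class is $[\gamma_{1}]+[c_{j}]$ with $[c_{j}]\ne 0$ (for $j\le n-1$), the classes $\gamma_{1},\dots ,\gamma_{2g},\delta_{1},\dots ,\delta_{n-1}$ are pairwise distinct. Cutting the $n$-punctured disk above along the $n-1$ disjoint arcs that are the traces of the $\delta_{j}$ produces $n$ once-punctured disks, one around each $p_{j}$. Hence $\mathcal F=\{\gamma_{i}\}_{i=1}^{2g}\cup\{\delta_{j}\}_{j=1}^{n-1}$ is a filling of $S_{g,n}$ by non-separating — hence essential, non-cone-point-peripheral — simple closed curves. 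From this description one reads off that the members of $\mathcal F$ are in general position, lie in distinct isotopy classes, and that any three of them contain a disjoint pair (two non-consecutive chain curves are disjoint; the $\delta_{j}$ are mutually disjoint and disjoint from every chain curve except $\gamma_{2}$), which are precisely the properties invoked in the proof of Lemma~\ref{lem:boundedwordlength_mappingclasses}.

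\emph{Step 2: geodesic realisation.} Each curve of $\mathcal F$ is an essential simple closed curve on the admissible cone surface $S$, so by Theorem~\ref{thm:scgeodesic_existence} it has a unique simple closed geodesic representative, which lies in $S\setminus P$; I would replace each curve of $\mathcal F$ by that representative. These geodesics are automatically pairwise in minimal position: two distinct simple closed geodesics that were not would cobound an innermost bigon, which is embedded and simply connected and hence develops under $\text{dev}\colon\widetilde{S\setminus P}\to\mathbb{H}^{2}$ to an embedded geodesic bigon in $\mathbb{H}^{2}$ — impossible, this being the bigon argument already used in the proof of Theorem~\ref{thm:scgeodesic_existence}. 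Since these representatives also avoid $P$, the complement of their union in $S$ is homeomorphic to the one computed in Step~1, namely a union of once-punctured disks. Thus $\mathcal F$ is an admissible filling of $S$ in the sense of Definition~\ref{def:AdmissibleFilling}, proving the lemma.

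\emph{Expected obstacle.} The geometric half (Step 2) is routine given Theorem~\ref{thm:scgeodesic_existence} and the standard no-bigon property of geodesics. The real work is Step 1: arranging, using only the genus, a filling whose curves are all non-cone-point-peripheral while the complement still consists of (once-punctured) disks, and with the extra combinatorial features needed later. The auxiliary curves $\delta_{j}$ are exactly what $g\ge 1$ buys — on a punctured sphere every simple closed curve is separating, so no such filling exists there, which is why the statement is restricted to genus at least one.
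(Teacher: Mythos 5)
Your proof is correct in substance, but it follows a genuinely different route from the paper. The paper first builds a \emph{partition} of the admissible cone surface: it inductively adds non cone-point peripheral essential simple closed geodesics until the complement is a union of pairs of pants and one-cone-point annuli, and then manufactures the remaining filling curves by concatenating simple arcs running between boundary components of these pieces, finally passing to geodesic representatives; the fact that the new curves are not cone-point peripheral is extracted from their intersecting some partition curve exactly once. You instead work purely topologically on $S_{g,n}$: a chain of $2g$ curves filling the underlying closed surface, plus $n-1$ band-sums of $\gamma_1$ with loops about the cone points, all \emph{non-separating}, so that essentiality and non-cone-point-peripherality are automatic; Theorem~\ref{thm:scgeodesic_existence} and the no-bigon/developing argument then do the geometric work. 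Your approach buys explicitness (and it directly delivers the general-position, distinct-class, and disjoint-pair properties that the paper later invokes in Lemma~\ref{lem:boundedwordlength_mappingclasses}), while the paper's partition-based construction stays closer to the cone-surface geometry and parallels its treatment of cone spheres. One point in your Step 1 deserves more care than your one-line assertion: if you take \emph{nested} parallel pushoffs of $\gamma_1$ with bands running to small loops about distinct single punctures, the band of an inner pushoff is forced to cross an outer $\delta_k$ (it gets trapped in the channel between the strands of $\delta_k$, whose finger encloses only $p_k$); disjointness is nevertheless achievable, e.g.\ by interleaving the pushoffs with the punctures (in the $4g$-gon picture, arcs parallel to the $\gamma_1$-side at heights alternating with the punctures, each with a short finger) or by letting the $j$-th curve enclose the first $j$ punctures, so the claim is true but should be justified. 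Finally, both your Step 2 and the paper's proof pass from the topological configuration to geodesic representatives with the same level of brevity, relying on the standard uniqueness of minimal-position configurations; you are not weaker than the paper there.
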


\begin{proof}
	Let $S$ be an admissible hyperbolic cone surface with genus greater than or equal to 1 and $\gamma$ be a non cone-point peripheral simple closed curve. As $S$ is admissible, we can assume that $\gamma$ is a simple closed geodesic. We construct a partition $\mathcal{P}$ containing $\gamma$ inductively. Assume that we have found a set of simple closed geodesics $\mathcal{P} = \{\gamma, \gamma_1, \ldots, \gamma_k\}$ that are non cone-point peripheral and disjoint. 

    Let $S_1, \ldots, S_k$ be the connected components of $S \setminus \mathcal{P}_k$. Now, if any $S_i$ has genus greater than 1, then it has a non cone-point peripheral essential simple closed geodesic. If any $S_i$ has genus 0, then it has to have at least two boundary geodesics and one cone-point. Otherwise, it would imply that one of the $\gamma_i$'s is cone-point peripheral. Therefore, any of these $S_i$ either:
    \begin{enumerate}
        \item contains a non cone-point peripheral essential simple closed geodesic or
        \item is a hyperbolic pair of pants with geodesic boundaries. 
        \item is a hyperbolic annulus with one cone-point
    \end{enumerate}

    We extend the list $\mathcal{P}$ by including, for each $S_i$, one non cone-point peripheral essential simple closed geodesic, if such a geodesic exists. We continue this process until we are left with only annuli and pants. And, $\mathcal{P}$ thus obtained would be our partition.
 
    Any two boundary components of the annuli and pair of pants obtained by the partition can be connected by a simple arc. In the case of a pair of pants, we get three disjoint simple arcs and for an annulus we get one simple arc not passing through cone-point. Now, we join these arcs in such a way that they form simple closed curves of $S$:
    \begin{itemize}
        \item If $P_1$ and $P_2$ are pair of pants that share two boundary components, let $l_1, l_2$ be the arcs joining these shared boundary components. Then, concatenating $l_1$, $l_2$ and some arcs on the boundary component we get a simple closed curve intersecting these two boundary curves.
        \item If $P_1$ and $P_2$ are pair of pants that share only one boundary component, there exists a sequence of distinct pair of pants $P_1, P_2, P_3, \ldots P_n $ in $\mathcal{P}$ such that consecutive pair of pants share at least one boundary component and $P_n$ shares one boundary component with $P_1$. Now concatenate the arcs joining these boundary components and some arcs on the boundary component to form a simple closed curve.   
    \end{itemize}
        
    We construct simple closed curves as described above making sure that we use at least two arcs from every pair of pants and one arc from every annulus in the decomposition. Therefore, taking $\mathcal{P}$ along with the geodesic representatives of these newly constructed simple closed curves will give us an admissible filling. The simple closed geodesics formed this way would not be cone-point peripheral because they essentially intersect at least one of the $\gamma_i$ exactly once.
\end{proof}

\SBQconditions*

\begin{proof}
	Denote by $\mathcal{C}_S$, the set of simple closed geodesics on $S$.
	Consider the following set $\mathcal{A}_L$ defined as
	$$\mathcal{A}_L \coloneqq \{c \in \mathcal{C}_S ~ |~ l(c) < L \}$$
	where $l(c)$ denotes the length of the geodesic $c$. Let $\mathcal{F} = \{a_1, \ldots, a_n\}$ be an admissible filling. Note that by Collar Lemma of hyperbolic cone surfaces as proved in \cite{PARLIER}, every curve in this filling has a collar around them. This implies that there exists $N \in \mathbb{N}$, such that for any geodesic with $l(c) < L$, the intersection with the filling, $i(c, \mathcal{F}) < N$. Thus, by Lemma~\ref{lem:FillingLemma}, the set $\mathcal{A}_L$ is finite.

        We know that, length of the geodesic is given by the translation length of the corresponding hyperbolic element obtained via the holonomy. As the translation lengths are completely determined by the trace of this hyperbolic element, we see that the holonomy satisfies $SBQ$-conditions.
\end{proof}

\begin{remark}{\ }
    \begin{enumerate}
        \item Note that the holonomy of admissible cone surfaces with at least one irrational cone-angle is indiscrete. Hence, these representations form examples of indiscrete representations that satisfy $SBQ$-conditions and in particular, possess a discrete length spectrum.
        \item It will be interesting to find the examples of representations satisfying $SBQ$-conditions that are not simple-stable or determine whether such examples exist as posed in Question~\ref{Question2}.
    \end{enumerate}    
\end{remark}






\bibliographystyle{amsalpha}
\bibliography{references.bib}

\end{document}